\newcommand{\R}{\mathbb{R}}
\newcommand{\un}{\mathbf{1}}
\newcommand{\eps}{\varepsilon}
\newcommand{\ha}{H^{\frac12} (\Omega) }
\newcommand{\had}{\dot{H}^{\frac12} (\Omega) }
\newcommand{\intom}{\int_\Omega}
\def\na{\nabla}
\def\pa{\partial}
\def\RR{\mathbb{R}}
\def\eps{\varepsilon}
\def\vphi{\varphi}
\def\lto{\longrightarrow}
\newtheorem{proposition}{Proposition}
\newtheorem{lemma}{Lemma}
\newtheorem{corollary}{Corollary}
\newtheorem{theorem}{Theorem}
\theoremstyle{definition}
\theoremstyle{remark}
\newtheorem{rem}{Remark}
\author{C. Imbert\footnote{CEREMADE, Universit\'e Paris-Dauphine, UMR
    CNRS 7534, place de Lattre de Tassigny, 75775 Paris cedex 16,
    France} and A. Mellet\footnote{Department of
    Mathematics. Mathematics Building. University of Maryland. College
    Park, MD 20742-4015, USAä}}
\title{Existence of solutions for a higher order non-local  equation appearing
in crack dynamics}
\begin{document}

\maketitle

\begin{abstract} In this paper, we prove the existence of non-negative
  solutions for a non-local higher order degenerate parabolic equation
  arising in the modeling of hydraulic fractures. The equation is
  similar to the well-known thin film equation, but the Laplace
  operator is replaced by a Dirichlet-to-Neumann operator,
  corresponding to the square root of the Laplace operator on a
  bounded domain with Neumann boundary conditions (which can also be
  defined using the periodic Hilbert transform).  In our study, we
  have to deal with the usual difficulty associated to higher order
  equations (e.g. lack of maximum principle).  However, there are
  important differences with, for instance, the thin film equation:
  First, our equation is nonlocal; Also the natural energy estimate is
  not as good as in the case of the thin film equation, and does not
  yields, for instance, boundedness and continuity of the solutions
  (our case is critical in dimension $1$ in that respect).
\end{abstract}

\paragraph{Keywords:} Hydraulic fractures, Higher order equation,
Non-local equation, Thin film equation, Non-negative solutions,
periodic Hilbert transform

\paragraph{MSC:} 35G25, 35K25, 35A01, 35B09

\section{Introduction}
This paper is devoted to the following problem:
\begin{equation}\label{eq:0}
\left\{
\begin{array}{ll}
u_t+(u^n I(u)_x)_x = 0 & \mbox{ for } x\in\Omega,\quad t>0\\[3pt]
u_x=0 ,\; u^n I(u)_x=0 & \mbox{ for } x\in\pa\Omega,\quad t>0\\[3pt]
u(x,0)=u_0(x)& \mbox{ for } x\in\Omega
\end{array}
\right.
\end{equation} 
where $\Omega$ is a bounded interval in $\RR$, $n$ is a positive real
number and $I$ is a non-local elliptic operator of order $1$
satisfying $I\circ I=-\Delta$; the operator $I$ will be defined
precisely in Section \ref{sec:pre} as the square root of the Laplace
operator with Neumann boundary conditions.  When $\Omega=\RR$, it
reduces to $I=-(-\Delta)^{1/2}$. In the sequel, we will always take
$\Omega=(0,1)$.

When $n=3$, this equation arises in the modeling of hydraulic
fractures. In that case, $u$ represents the opening of a rock fracture
which is propagated in an elastic material due to the pressure exerted
by a viscous fluid which fills the fracture (see Section
\ref{sec:phys} for details).  Such fractures occur naturally, for
instance in volcanic dikes where magma causes fracture propagation
below the surface of the earth, or can be deliberately propagated in
oil or gas reservoirs to increase production.  There is a significant
amount of work involving the mathematical modeling of hydraulic
fractures, which is beyond the scope of this article.  The model that
we consider in our paper, which corresponds to very simple fracture
geometry, was developed independently by Geertsma and De Klerk
\cite{DK} and Khristianovic and Zheltov \cite{ZK}.  Spence and Sharp
\cite{SS85} initiated the work on self-similar solutions and
asymptotic analyses of the behavior of the solutions of (\ref{eq:0}) near the tip of the fracture (i.e. the boundary of the support of $u$). There
is now an abundant literature that has extended this formal analysis
to various regimes (see for instance \cite{ad}, \cite{AP08},
\cite{MKP} and reference therein).  Several numerical methods have
also been developed for this model (see in particular Peirce et
al. \cite{PD08}, \cite{PD09}, \cite{PS05} and \cite{PS05b}).  However, to our
knowledge, there are no rigorous existence results for general initial
data. This paper is thus a first step toward a rigorous analysis of
(\ref{eq:0}).

\vspace{5pt}

From a mathematical point of view, the equation under consideration:
\begin{equation}\label{eq:main}
u_t+(u^n I(u)_x)_x = 0
\end{equation}
is a non-local parabolic degenerate equation of order $3$.  It is
closely related to the thin film equation which corresponds to the
case $I=\pa_{xx}$:
\begin{equation}\label{eq:tf}
u_t+(u^n u_{xxx})_x = 0
\end{equation}
(note that the porous media equation corresponds to the case
$I(u)=u$).  In particular, like the thin-film equation, Equation
(\ref{eq:main}) lacks a comparison principle, and the existence of a
non-negative solution (for non-negative initial data) is thus
non-trivial (it is well known that non-negative initial data may
generate changing sign solutions of the fourth order equation $\pa_t h
+ \pa_{xxxx} h=0$).

However, compared with (\ref{eq:tf}), the analysis of (\ref{eq:main})
presents some additional difficulties: First, the operator $I$ is
non-local and the algebra is not as simple as with the Laplace
operator. Second, because of the lower order of the operator $I$, the
natural regularity given by the energy inequality ($u\in H^{\frac12}$
rather than $u\in H^1$) does not give the boundedness and continuity
of weak solutions even in dimension $1$.
\medskip 

A remarkable feature of (\ref{eq:main}) and (\ref{eq:tf}) is
that the degeneracy of the diffusion coefficient permits the existence
of non-negative solutions.  In the case of the thin film equation
(\ref{eq:tf}), the existence of non-negative weak solutions was first
addressed by F. Bernis and A.~Friedman \cite{bf90} for $n> 1$.
Further results were later obtained, by similar technics, in
particular by E. Beretta, M. Bertsch and R. Dal Passo \cite{BBD} and
A. Bertozzi and M. Pugh \cite{BP1,BP2}.  Results in higher dimension
were obtained more recently (see \cite{Grun01,Grun95,DGG98}).
\vspace{10pt}

As in the case of the thin film equation, our approach to prove the
existence of solutions for (\ref{eq:0}) relies on a
regularization-stability argument, and the main tools are integral
inequalities which we present now.  \medskip

\paragraph{Integral inequalities.} Besides the conservation of mass,
the solutions of (\ref{eq:main}) satisfy two important inequalities
(that have a counterpart for the thin film equation): Assuming that we
have $\Omega=\R$ and $I=-(-\Delta)^{1/2}$ (for the time being), we can
indeed easily show that the solutions of (\ref{eq:main}) satisfy the
energy inequality
\begin{equation}\label{eq:energy} - \int_\Omega u(t) I(u(t))\, dx
  +\int_0^T\int_\Omega u^n (I(u)_x)^2\, dx\, dt \leq - \int_\Omega u_0
  I(u_0)\, dx
\end{equation}
(where $-\int u I(u)\, dx$ is the homogeneous $H^{1/2}$ norm)
and an entropy like inequality
\begin{equation}\label{eq:entropy} 
  \int_\Omega G(u(t))\, dx -\int_0^T\int_\Omega u_x I(u)_x\, dx\, dt =\int_\Omega G(u_0)\, dx
\end{equation}
where $G''(s)=\frac{1}{s^n}$. 
\medskip

The energy inequality (\ref{eq:energy}) controls the
$L^\infty(0,T;H^{1/2}(\Omega))$ norm of the solutions (instead of
$L^\infty(0,T;H^{1}(\Omega))$ for the thin film equation).  We see
here that the order $1/2 $ for the operator $I$ is critical in
dimension $1$ in the sense that we are just short of a
$L^\infty((0,T)\times \Omega)$ estimate and continuity of the
solutions.  Because of that fact, many of the arguments used in the
analysis of the thin film equation do not apply directly to our case.

Next, we observe that as in the case of the thin film equation, the
entropy inequality (\ref{eq:entropy}) provides some control on some
negative power of $u$ for $n> 2$.  Indeed, we can take
$$ G(s) = \int_1^s \int_1^r \frac{1}{t^n}\, dt\, dr$$
so that $G$ is a nonnegative convex function satisfying $G'(1)=0$ and
$G(1)=0$.  This yields:
\begin{equation}\label{eq:G}
G(s)=\left\{
\begin{array}{ll}
\displaystyle s \ln s -s +1 & \mbox{ when } n=1 \\[10pt]
\displaystyle -\frac{s^{2-n}}{(2-n)(n-1)} +\frac{s}{n-1}+ \frac{1}{2-n} & \mbox{ when } 1< n< 2 \\[10pt]
\displaystyle \ln \frac 1 s +s-1 & \mbox{ when } n=2\\[8pt]
\displaystyle \frac{1}{(n-2)(n-1)}\frac{1}{s^{n-2}} +\frac{s}{n-1} -\frac{1}{n-2} & \mbox{ when } n> 2.
\end{array}
\right.
\end{equation}
Note that $G(0)=+\infty$ when $n\geq 2$, while $G$ is bounded in a
neighborhood of $0$ for $n\in[1,2)$.  This will be key in proving the
non-negativity of the solution.  The entropy equality also gives some
control on the $L^2(0,T;H^{3/2}(\Omega))$ norm of the solution which
will be crucial in getting the necessary compactness in the construction of the  solution. 
However, in order to make use of this
inequality, we need $\int_\Omega G(u_0)\, dx$ to be finite, which,
when $n\geq 2$ prohibits compactly supported initial data.  \medskip

Besides those two inequalities, there are several other integral
estimates that have proved extremely useful in the study of the thin
film equation.  The simplest one are local versions of
(\ref{eq:energy}) and (\ref{eq:entropy}). However, because of the
nonlocal character of the operator $I$, it seems very delicate to
establish similar inequalities for (\ref{eq:main}).

Another crucial estimate in the analysis of \eqref{eq:tf}, established by
Bernis \cite{bernis96c}, is the following:
$$\int (u^{\frac{n+2}{2}}_{xxx})^2\, dx \leq C \int u^n u_{xxx}^2\, dx$$
for $n \in (\frac12,3)$.  Such an inequality yields important
estimates from the dissipation in the energy inequality (despite the
degeneracy of the diffusion coefficient).  Again it is not clear what
would play the role of this inequality in our situation.  The same
remark holds for the so called $\alpha$-entropy
\cite{bf90,BBD,BP2,BDGG98}: for $\alpha\in
(\max(-1,\frac{1}{2}-n),2-n)$, $ \alpha \neq 0$, it can be proved that
the solutions of the thin film equation (\ref{eq:tf}) satisfy:
\begin{multline*}
  \frac{1}{\alpha+1} \int u^{\alpha+1} (\cdot,T)\, dx + C \int_0^T\int
  \big(|\pa_x u^{\frac{\alpha+n+1}{4}}|^4+|\pa_{xx}u^{\frac{\alpha+n+1}{2}}|^2\big) \, dx\, dt \\
  \leq \frac{1}{\alpha+1} \int u^{\alpha+1} _0\, dx.
\end{multline*}

These last two inequalities are essential in establishing many
qualitative properties of the solutions, such as finite speed
expansion of the support and waiting time phenomenon.  Though we
expect such properties to hold for (\ref{eq:main}) as well, it is not
clear at this point how to deal with the non local character of $I$.
\medskip

Finally, let us comment on the power of the diffusion coefficient
$u^n$.  Interestingly, the power $n=3$, which is the physically
relevant power in our model, is critical in many results for the thin
film equation. In particular many existence and regularity results (as
well as waiting time results) are only valid for $n\in(0,3)$.  It is
actually believed that for $n\geq 3$, (and it is proved for $n\geq 4$)
the support of the solutions of (\ref{eq:tf}) does not expand.  It is
not clear what would be the critical exponent for (\ref{eq:main}),
though numerical results suggest that for $n=3$, the support of the
solutions of (\ref{eq:main}) does expand for all time.

\medskip

\paragraph{Main results.}
A weak formulation of (\ref{eq:main}) is given by
$$   
\iint_Q u \; \partial_t \vphi\, dx\, dt + \iint_Q u^n \pa_xI (u) \;
\pa_x \vphi\, dx\,dt = - \int_\Omega u_0 \vphi(0,\cdot)\, dx
$$
for all $\vphi \in \mathcal{D} (\overline{Q})$ where $Q$ denotes
$\Omega \times (0,T)$.  However, because of the degeneracy of the
coefficient $u^n$, it is difficult to give a meaning to the term $
u^n\pa_xI(u)$. We thus perform an additional integration by parts to
get the following weak formulation of (\ref{eq:main}):
\begin{eqnarray}
  \iint_Q u \; \partial_t \vphi\, dx\, dt - \iint_Q n u^{n-1}\pa_x u \, I (u) \; \pa_x \vphi\, dx\,dt  
  - \iint_Q u^n  \, I (u) \; \pa_{xx} \vphi\, dx\,dt  \nonumber \\
  = - \int_\Omega u_0 \vphi(0,\cdot)\, dx\label{eq:weak}
\end{eqnarray}
for all $\vphi \in \mathcal{D} (\overline{Q})$ satisfying $\pa_x \vphi|_{\pa\Omega}=0$.

We are going to prove the following existence theorem:
\begin{theorem}\label{thm:main}
Assume $n\geq 1$.
For any non-negative initial condition $u_0 \in \ha$ such that
\begin{equation}\label{eq:initpositive} 
\int_\Omega G(u_0)\, dx <\infty,
\end{equation}
there exists a non-negative function $u \in L^\infty (0,T,\ha)$ such
that
\begin{equation}\label{estim:nonlinear}
  u\in L^2(0,T,H^{\frac32}_N(\Omega))
\end{equation}
which satisfies \eqref{eq:weak}
 for all $\vphi \in \mathcal{D} (\overline{Q})$ satisfying $\pa_x \vphi|_{\pa\Omega}=0$. 
\item Furthermore  $u$ satisfies, for almost every $t \in (0,T)$
\begin{eqnarray}
\label{estim:conservation mass}
\intom u (t,\cdot ) \, dx= \intom u_0 \, dx , \\
\label{estim:nrj}
\|u (t,\cdot)\|_{\ha}^2 + 2  \int_0^t \int_\Omega g^2\, dx\, ds  \le \|
u_0 \|_{\ha}^2 ,
\end{eqnarray}
where the function $g \in L^2 (Q)$ satisfies 
$g=\pa_x (u^{\frac n 2} I(u))-\frac n 2 u^{\frac {n-2} 2}\pa_x u\; I(u)$ in
$\mathcal D'(\Omega)$, and 
\begin{equation}
\label{estim:entropy}
\intom G(u(x,t)) \, dx+||u||^2_{L^2(0,t;\dot{H}^{\frac32}_N(\Omega))} 
\le \intom G(u_0(x))\, dx  .
\end{equation}
\end{theorem}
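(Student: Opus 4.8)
Following the classical regularization--stability strategy of Bernis--Friedman, we construct the solution as a limit of solutions of non-degenerate approximate problems.

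\textbf{Regularized problem.} For $\eps>0$ replace the mobility $u^n$ by a smooth function $a_\eps:\R\to[\eps,\infty)$ with $a_\eps(s)=\eps$ for $s\le \eps^{1/n}$, $a_\eps(s)=s^n$ for $s$ large, $a_\eps(s)\le s^n+1$, $|a_\eps'(s)|\le n(s^{n-1}+1)$, and $a_\eps(s)\to s^n$ for every $s>0$; the associated entropy $G_\eps$, defined by $G_\eps''=1/a_\eps$, $G_\eps(1)=G_\eps'(1)=0$, is then $\ge0$, converges to $G$ pointwise on $[0,\infty)$, and satisfies $G_\eps(s)\ge \frac{s^2}{2\eps}-C_\eps$ for $s\le0$, so that $G_\eps(s)\to+\infty$ as $\eps\to0$ for every $s\le0$. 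Take $u_{0\eps}=e^{\eps\Delta_N}u_0+\eps$, which is smooth, $\ge\eps$, satisfies $u_{0\eps,x}=0$ on $\pa\Omega$, and verifies $u_{0\eps}\to u_0$ in $\ha$ and $\intom G_\eps(u_{0\eps})\to\intom G(u_0)$; it is precisely to make this last convergence possible that hypothesis \eqref{eq:initpositive} is imposed. The regularized equation $\pa_t u_\eps+(a_\eps(u_\eps)I(u_\eps)_x)_x=0$ with $u_{\eps,x}=0$ on $\pa\Omega$ is a quasilinear non-local parabolic equation of order $3$ (its linearization about a constant $\bar u$ has Fourier symbol $a_\eps(\bar u)|\xi|^3$); a Galerkin approximation in the orthonormal basis of Neumann eigenfunctions of $-\Delta$, which diagonalizes $I$, together with the uniform estimates below, produces a global-in-time solution $u_\eps$ regular enough to justify the computations that follow. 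Note that $u_\eps$ need not be non-negative, since $I$ has no comparison principle: its sign will be controlled only through the entropy.

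\textbf{Uniform estimates.} Proceeding as in the introduction with $u^n$ replaced by $a_\eps(u_\eps)$, and using the boundary conditions (note that $I(v)_x=0$ on $\pa\Omega$ whenever $v_x=0$ on $\pa\Omega$, by the spectral definition of $I$), one obtains: conservation of mass $\intom u_\eps(t)=\intom u_{0\eps}$ by integrating the equation; the energy identity $\|u_\eps(t)\|^2_{\dot H^{\frac12}}+2\int_0^t\intom a_\eps(u_\eps)(I(u_\eps)_x)^2=\|u_{0\eps}\|^2_{\dot H^{\frac12}}$ by testing with $I(u_\eps)$, where $\|v\|^2_{\dot H^{1/2}}=-\intom vI(v)$, whence, with mass conservation, a uniform bound for $u_\eps$ in $L^\infty(0,T;\ha)$ and for $g_\eps:=\sqrt{a_\eps(u_\eps)}\,I(u_\eps)_x$ in $L^2(Q)$; and the entropy identity $\intom G_\eps(u_\eps(t))+\|u_\eps\|^2_{L^2(0,t;\dot H^{\frac32}_N)}=\intom G_\eps(u_{0\eps})$ by testing with $G_\eps'(u_\eps)$, using $a_\eps G_\eps''\equiv1$ and $-\intom u_{\eps,x}I(u_\eps)_x=\|u_\eps\|^2_{\dot H^{\frac32}_N}$, whence uniform bounds for $u_\eps$ in $L^2(0,T;H^{\frac32}_N(\Omega))$ and for $t\mapsto\intom G_\eps(u_\eps(t))$. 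Finally, writing $a_\eps(u_\eps)I(u_\eps)_x=\sqrt{a_\eps(u_\eps)}\,g_\eps$, one gets $\pa_t u_\eps=-\pa_x(\sqrt{a_\eps(u_\eps)}\,g_\eps)$ bounded in $L^2(0,T;W^{-1,r}(\Omega))$ for some $r>1$. (The entropy step, which does not survive the Galerkin truncation verbatim, is carried out on $u_\eps$ itself thanks to its regularity.)

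\textbf{Passage to the limit.} By the Aubin--Lions lemma, a subsequence of $(u_\eps)$ converges to some $u$ strongly in $L^2(0,T;H^{\frac32-\delta}(\Omega))$ for every $\delta>0$ and in $C([0,T];H^{-s}(\Omega))$ for some $s$; since $u_\eps$ is also bounded in $L^\infty(0,T;\ha)\subset L^q(Q)$ for every $q<\infty$ (using $\ha\hookrightarrow L^q(\Omega)$ in dimension $1$), this upgrades to strong convergence in $L^p(Q)$ for every $p<\infty$, and a.e. on $Q$. By weak lower semicontinuity $u\in L^\infty(0,T;\ha)\cap L^2(0,T;H^{\frac32}_N(\Omega))$, which is \eqref{estim:nonlinear}, and $g_\eps\rightharpoonup g$ in $L^2(Q)$. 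Since $\intom G_\eps(u_\eps(t))\le C$ while $G_\eps\to+\infty$ on $(-\infty,0]$ as $\eps\to0$, the negative part of $u$ vanishes, i.e. $u\ge0$ a.e.; consequently $a_\eps(u_\eps)\to u^n$ and $G_\eps(u_\eps)\to G(u)$ a.e., and Fatou's lemma (on the left) together with $\intom G_\eps(u_{0\eps})\to\intom G(u_0)$ gives \eqref{estim:entropy}; the energy identity passes to the limit by lower semicontinuity of the $H^{1/2}$-norm and the weak convergence $g_\eps\rightharpoonup g$, yielding \eqref{estim:nrj} (the full $H^{1/2}$ norm being recovered from the $\dot H^{1/2}$ identity and mass conservation of the zeroth mode); and $\intom u_\eps(t)=\intom u_{0\eps}$ passes to the limit to give \eqref{estim:conservation mass}, for a.e. $t$. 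To pass to the limit in the (twice integrated by parts) weak form $\iint u_\eps\pa_t\vphi-\iint a_\eps'(u_\eps)u_{\eps,x}I(u_\eps)\vphi_x-\iint a_\eps(u_\eps)I(u_\eps)\vphi_{xx}=-\intom u_{0\eps}\vphi(0,\cdot)$, one uses that $a_\eps(u_\eps)\to u^n$ and $a_\eps'(u_\eps)\to nu^{n-1}$ in $L^p(Q)$ for all $p<\infty$, that $u_{\eps,x}\to u_x$ strongly in $L^2(0,T;L^p(\Omega))$, and that $I(u_\eps)\to I(u)$ strongly in $L^2(0,T;L^p(\Omega))$ --- the latter since $I(u_\eps)$ is bounded in $L^2(0,T;\ha)$ (entropy bound, $I$ of order $1$) and converges in $L^2(0,T;H^{-1}(\Omega))$, hence by interpolation in $L^2(0,T;H^{\frac12-\delta})\hookrightarrow L^2(0,T;L^p)$. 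All nonlinear products then converge in $L^1(Q)$ (using $u_x=0$ a.e. on $\{u=0\}$ to handle that set), the linear term converges weakly, and $\intom u_{0\eps}\vphi(0,\cdot)\to\intom u_0\vphi(0,\cdot)$, so $u$ satisfies \eqref{eq:weak}. Lastly, $g$ is identified: on $\{u>0\}$ one has $\sqrt{a_\eps(u_\eps)}\to u^{n/2}>0$ and $I(u_\eps)_x\rightharpoonup I(u)_x$, so there $g=u^{\frac n2}I(u)_x=\pa_x(u^{\frac n2}I(u))-\frac n2 u^{\frac{n-2}2}u_x\,I(u)$; on $\{u=0\}$, $u_x=0$ a.e. (gradient of an $H^1$ function vanishes a.e. on its zero set) and $g=0$ a.e. by the degeneracy, so the stated identity for $g$ holds in $\mathcal D'(\Omega)$.

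\textbf{Main difficulty.} Two points require genuine care. First, the design of the pair $(a_\eps,G_\eps)$: because $G$ is \emph{bounded} near $0$ when $n\in[1,2)$, the non-negativity of $u$ cannot be read off from a blow-up of $G$ at $0$ (as in the thin film case $n\ge2$) and must instead be forced through the behaviour of $G_\eps$ on $(-\infty,0]$, which is what dictates the choice of $a_\eps$ on negative arguments and has to be reconciled with $\intom G_\eps(u_{0\eps})\to\intom G(u_0)$. Second, the passage to the limit in the nonlinear, non-local terms: our energy estimate yields only $u\in L^\infty(0,T;\ha)$ --- critical in dimension $1$, and short of an $L^\infty(Q)$ bound --- so $I(u)$ cannot be treated as a bounded or even continuous function, and (because of the non-locality of $I$) no local energy/entropy inequality is available; the strong $L^2(0,T;H^{\frac32})$ regularity coming from the entropy is therefore essential both to identify $g$ and to upgrade the weak convergence of $I(u_\eps)$ to the strong convergence needed to pass to the limit in the products $a_\eps(u_\eps)I(u_\eps)$ and $a_\eps'(u_\eps)u_{\eps,x}I(u_\eps)$.
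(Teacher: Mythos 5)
Your proposal follows the same overall regularization--stability strategy as the paper: regularize the mobility to make the problem non-degenerate, derive uniform energy and entropy estimates, get compactness via Aubin--Lions, pass to the limit in the twice-integrated-by-parts weak form, and deduce non-negativity from the blow-up of the approximate entropy. Two structural choices differ, however, and they are worth comparing.

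First, for the existence of the regularized solution $u_\eps$, you invoke a Galerkin scheme in the Neumann eigenbasis, whereas the paper discretizes in time and solves each implicit step by viewing $A(u)(v) = -\intom u\,I(v) + (\intom u)(\intom v) + \tau\intom f_\eps(u)\,\pa_x I(u)\,\pa_x I(v)$ as a pseudo-monotone, coercive operator on $H^2_N(\Omega)$ and applying Lions' theorem. The paper's route is designed precisely so that the entropy estimate can be obtained at the \emph{discrete} level: at each time step one may take, via Corollary~\ref{cor:aux}, the admissible test function $v$ solving $-I(v)+\intom v = G_\eps'(u)$ and then use convexity of $G_\eps$ to telescope. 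You acknowledge that the entropy test function $G_\eps'(u_\eps)$ does not survive Galerkin truncation and propose to derive the entropy estimate ``on $u_\eps$ itself, thanks to its regularity,'' but you neither establish that regularity nor sketch how to justify the formal computation at the $\eps$-level. That step is genuinely nontrivial (the equation is third order, nonlocal, and quasilinear), and as written it is a gap, though a recognised and plausibly fillable one; the paper's discrete construction sidesteps it.

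Second, you use a single regularization $a_\eps$ designed so that $G_\eps(s)\to+\infty$ for every $s<0$ as $\eps\to0$, and thereby treat all $n\ge1$ in one stroke. The paper instead argues directly from the blow-up of the \emph{limit} entropy $G$ at $0$ for $n\ge2$, and for $n\in[1,2)$ introduces a second, Bertozzi--Pugh type regularization $f_\delta(s)=s^{3+n}/(\delta s^n+s^3)$ together with lifted data $u_0+\delta$, so that the corresponding entropy $G_\delta$ blows up at $0$ even though $G$ does not; it then lets $\delta\to0$ after $\eps\to0$. Your unified argument is sound (note only that $G_\eps(0)$ need not diverge when $n<2$ --- what you use, correctly, is $G_\eps(s)\to\infty$ for $s<0$ combined with a.e.\ convergence, Egorov and Fatou), and it is arguably more economical than the paper's two-stage limit; what the two-stage approach buys is that each limit is conceptually clean --- the $\eps$-limit for $n\ge2$ reads off non-negativity purely from the blow-up of $G$ itself, while the $\delta$-limit is a standard positivity-lift --- at the cost of an extra layer of approximation.

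The remaining steps (Aubin--Lions compactness from the entropy $H^{3/2}_N$ bound and the $W^{-1,r'}$ bound on $\pa_t u_\eps$, weak convergence of $g_\eps$ and lower semicontinuity for \eqref{estim:nrj} and \eqref{estim:entropy}, identification of the flux using strong $L^2(0,T;L^p)$ convergence of $I(u_\eps)$ and $\pa_x u_\eps$ together with $u_x=0$ a.e.\ on $\{u=0\}$) match the paper's proof essentially line by line.
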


We recall that the function $G:(0,\infty)\to \R_+$ is given by (\ref{eq:G}). The space $H^{\frac32}_N(\Omega)$ appearing in (\ref{estim:nonlinear}) will be defined precisely in Section
\ref{sec:pre}. In particular, the following characterization will be
given:
$$ 
H^{\frac32}_N(\Omega) = \left\{u\in H^{\frac32}(\Omega)\, ;\, \int_\Omega
  \frac{u_x^2}{d(x)}\, dx<\infty \right\}
$$
where $d(x)$ denotes the distance to $\pa \Omega$.  Condition
(\ref{estim:nonlinear}) thus implies that $u$ satisfies $u_x=0$ on
$\partial \Omega$ in some weak sense.  \vspace{5pt}

Note that at least formally, we have $g=u^{\frac n 2} \pa_x I(u)$
  (though we do not have enough regularity on $u$ to give a meaning to
  this product in general).  
Finally, we point out that we have $H^{\frac32}_N (\Omega) \subset W^{1,p}(\Omega)$ for all
$p<\infty$ and so every terms in \eqref{eq:weak} makes sense.
\vspace{10pt}

For $n\geq 2$, condition (\ref{eq:initpositive}) requires in
particular that $\mathrm{Supp}(u_0)=\Omega$ and inequality
(\ref{estim:entropy}) implies that this remains true for all positive
time.  This is a serious restriction since the case of compactly
supported initial data is physically the most interesting (see Section
\ref{sec:phys}). We hope to be able to get rid of this assumption in a
further work.

For $n >3 $, we can actually show that condition
(\ref{eq:initpositive}) requires $u(\cdot ,t)$ to be strictly positive
for a.e. $t\in (0,T)$.  In fact, we can prove:
\begin{theorem}\label{thm:2}
  When $n>3$, there exists a set $P\subset (0,T)$ such that
  $|(0,T)\setminus P|=0$ and the solution $u$ given by Theorem
  \ref{thm:main} satisfies
$$
u(\cdot, t)\in \mathcal C^\alpha(\Omega) \mbox{ for all $t\in P$ and for all $\alpha<1$}, 
$$ 
and $u(\cdot,t)$ is strictly positive in $\Omega$.  Finally, $u$
solves
$$ 
\pa_t u +\pa_x J =0 \qquad \mbox{ in } \mathcal D'(\Omega),
$$
where
$$ 
J(\cdot ,t)= u^n \pa_x I(u)\in L^1(\Omega) \quad \mbox{ for all } t\in P.
$$
\end{theorem}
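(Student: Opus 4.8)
The plan is to exploit the extra integrability that the exponent $n>3$ squeezes out of the energy and entropy bounds already established for the solution $u$ of Theorem \ref{thm:main}. First I would fix, for a.e.\ $t$, the information we already have: $u(\cdot,t)\in H^{3/2}_N(\Omega)$ from \eqref{estim:entropy}, hence $u(\cdot,t)\in\mathcal C^\alpha(\Omega)$ for every $\alpha<1$ by the embedding $H^{3/2}_N(\Omega)\subset W^{1,p}(\Omega)$ for all $p<\infty$ noted after the theorem; this gives the H\"older regularity claim on a set $P$ of full measure. The key new point is strict positivity. Since $n>3$ means $n-2>1$, the function $G(s)$ in \eqref{eq:G} behaves like $c\,s^{2-n}$ near $s=0$, and $G$ being integrable against $u(\cdot,t)$ by \eqref{estim:entropy} forces $\int_\Omega u(x,t)^{2-n}\,dx<\infty$ for a.e.\ $t$; combined with continuity of $u(\cdot,t)$, this rules out any zero of $u(\cdot,t)$ in $\Omega$ (a continuous function vanishing at a point makes $s^{2-n}$ non-integrable there when $2-n<-1$). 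Thus for $t\in P$ we may shrink $P$ so that $u(\cdot,t)$ is continuous, in $H^{3/2}_N$, and strictly positive on $\Omega$; note positivity on the closure may fail at $\pa\Omega$, so I would phrase the integrability of $J$ carefully.

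Next I would promote the weak formulation \eqref{eq:weak} to the divergence form stated. For $t\in P$ the coefficient $u(\cdot,t)^n$ is bounded below by a positive constant (on compact subsets of $\Omega$; near the boundary one uses that $u^n I(u)_x$ vanishes there in the weak sense built into \eqref{eq:weak}), so the degeneracy disappears and the product $u^n\pa_x I(u)$ is a genuine function. To see $J(\cdot,t)=u^n\pa_x I(u)\in L^1(\Omega)$, I would use the dissipation term in the energy inequality \eqref{estim:nrj}: we have $g\in L^2(Q)$ with $g=u^{n/2}\pa_x I(u)$ formally, so $\int_0^T\int_\Omega u^n(\pa_x I(u))^2\,dx\,dt<\infty$ and hence $u(\cdot,t)^n(\pa_x I(u)(\cdot,t))^2\in L^1(\Omega)$ for a.e.\ $t$; then $J=u^{n/2}\cdot(u^{n/2}\pa_x I(u))=u^{n/2}g$ and Cauchy--Schwarz with $u^{n/2}(\cdot,t)\in L^2(\Omega)$ (which follows from $u\in L^\infty(0,T;H^{1/2})\subset L^\infty(0,T;L^2)$ — actually one needs $u^{n/2}\in L^2$, i.e.\ $u^n\in L^1$, which holds since $u(\cdot,t)$ is bounded, being continuous on the compact closure of $\Omega$) gives $J(\cdot,t)\in L^1(\Omega)$. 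With $J$ now an honest $L^1$ function, I would revisit \eqref{eq:weak}: the two integrals involving $u$ reassemble, via an integration by parts that is now legitimate because $\pa_x I(u)(\cdot,t)$ is a function and $u^n$ is Lipschitz on compacta, into $-\iint_Q J\,\pa_x\vphi\,dx\,dt$, yielding $\pa_t u+\pa_x J=0$ in $\mathcal D'(\Omega)$ for a.e.\ $t$.

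I expect the main obstacle to be the behavior at the boundary $\pa\Omega$. The strict positivity argument via $\int u^{2-n}<\infty$ only controls interior zeros; at $x=0,1$ the solution could in principle vanish or merely fail to be bounded below, and the nonlocal operator $I$ does not localize, so one cannot simply work on a compactly contained subinterval when integrating by parts against a test function $\vphi$ supported up to $\pa\Omega$. The resolution should come from the boundary conditions encoded in the function space: membership in $H^{3/2}_N(\Omega)$ forces $u_x/\sqrt{d(x)}\in L^2$, hence $u_x\to 0$ at $\pa\Omega$ fast enough, and the weak formulation \eqref{eq:weak} was derived precisely under the constraint $\pa_x\vphi|_{\pa\Omega}=0$ together with the natural flux condition $u^n I(u)_x=0$ on $\pa\Omega$; I would check that the boundary terms produced in the integration by parts vanish because $\pa_x\vphi=0$ there, so that no information about the sign of $u$ at $\pa\Omega$ is actually needed for the distributional identity. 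A secondary technical point is measurability of the exceptional set: one obtains countably many full-measure sets (one per rational $\alpha$, one for the $G$-integrability, one for the $g$-integrability, one for the energy inequality), and $P$ is their intersection, still of full measure.
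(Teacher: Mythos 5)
Your plan for the H\"older regularity and strict positivity follows the paper's strategy, but there are two issues to flag, one minor and one substantive.

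The minor one is in the positivity step. You assert that continuity of $u(\cdot,t)$ together with $\int_\Omega u^{2-n}\,dx<\infty$ rules out zeros when $2-n<-1$. That parenthetical claim is false as stated: the continuous function $u(x)=|x-x_0|^{\beta}$ with $0<\beta<1/(n-2)$ vanishes at $x_0$ yet has $u^{2-n}=|x-x_0|^{-\beta(n-2)}\in L^1$. What actually closes the argument, and what the paper does, is to use $u(\cdot,t)\in C^\alpha$ for every $\alpha<1$ and the hypothesis $n>3$, which lets one choose $\alpha<1$ with $\alpha(n-2)>1$; then $u(x_0,t)=0$ would force $G(u(\cdot,t))\notin L^1$ near $x_0$. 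You do have the H\"older estimate in hand, so this is easily repaired, but the reasoning as written is wrong. (Also, your worry that the argument only controls interior zeros is unnecessary: the same one-sided computation works at $x_0\in\pa\Omega$, so $u(\cdot,t)$ is bounded below on all of $\overline\Omega$.)

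The substantive gap is in establishing $J(\cdot,t)\in L^1(\Omega)$. You write ``$g=u^{n/2}\pa_xI(u)$ formally, so $\int u^n(\pa_xI(u))^2\,dx<\infty$'' and then treat $\pa_xI(u)(\cdot,t)$ as a genuine $L^2$ function. But from \eqref{estim:entropy} you only know $u(\cdot,t)\in H^{3/2}_N(\Omega)$ for a.e.\ $t$, hence $I(u)(\cdot,t)\in H^{1/2}(\Omega)$ and $\pa_xI(u)(\cdot,t)$ is a priori only a distribution in $H^{-1/2}(\Omega)$. Reading $g=u^{n/2}\pa_xI(u)$ as a product of functions, in order to divide by $u^{n/2}$ and conclude $\pa_xI(u)\in L^2$, is exactly what needs to be proved; as the paper warns right after Theorem~\ref{thm:main}, that identity is only formal without further regularity. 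The paper closes this gap by returning to the regularized solutions: a Fatou/Chebyshev argument on the uniformly bounded dissipation shows that, after removing a further null set from $P$, $\liminf_\eps\int_\Omega f_\eps(u^\eps(\cdot,t))\,|\pa_xI(u^\eps)(\cdot,t)|^2\,dx<\infty$; combined with the uniform lower bound $u^\eps(\cdot,t)\ge\delta$ (available for small $\eps$ because $u^\eps(\cdot,t)\to u(\cdot,t)$ uniformly and $u(\cdot,t)>0$ on $\overline\Omega$), this gives $f_\eps(u^\eps)\ge\delta^n$ and hence $(u^\eps(\cdot,t))_\eps$ bounded in $H^2_N(\Omega)$. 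Only then is $u(\cdot,t)\in H^2_N(\Omega)$, so that $\pa_xI(u)(\cdot,t)\in L^2(\Omega)$, $J\in L^1(\Omega)$, and the integration by parts turning \eqref{eq:weak} into $\pa_tu+\pa_xJ=0$ is legitimate. Your circularity could in principle be broken without going back to $u^\eps$ (use $g(\cdot,t)\in L^2$, the distributional identity for $g$, strict positivity and boundedness of $u(\cdot,t)$, and the algebra property of $H^1(\Omega)$ in one dimension to deduce first $u^{n/2}I(u)\in H^1$ and then $I(u)=u^{-n/2}\cdot u^{n/2}I(u)\in H^1$), but that chain of deductions must be made explicit and is not in your proposal.
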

\medskip

\paragraph{Organization of the article.}
The paper is organized as follows: In Section~\ref{sec:phys}, we give a brief description 
of the mathematical  modeling of hydraulic fracture which gives rise to equation (\ref{eq:main}) with $n=3$. 
In Section~\ref{sec:pre}, we introduce the functional analysis tools that
will be needed to prove Theorem \ref{thm:main}. In particular, the
non-local operator $I(u)$ is defined, first using a spectral
decomposition, then as a Dirichlet-to-Neuman map. An integral
representation for $I$, using the periodic Hilbert transform is also
given.  Section~\ref{sec:regularized} is devoted to the study of a
regularized equation while the proof of Theorem \ref{thm:main} is
given in Sections~\ref{sec:main-proof} (for the case $n\geq 2$) and
\ref{sec:other-proof} (for the case $n\in[1,2)$).  Theorem \ref{thm:2}
is proved in Section~\ref{sec:thm2-proof}.

\paragraph{Acknowledgements.} The authors would like to thank
A. Pierce for bringing this model to their attention and for many very
fruitful discussions during the preparation of this article.  The
first author was partially supported by the ANR-projects ``EVOL'' and
``MICA''.  The second author was partially supported by NSF Grant
DMS-0901340.

\section{The physical model}\label{sec:phys}
 When $n=3$, Equation (\ref{eq:main}) can be used to model the propagation of an
impermeable KGD fracture driven by a viscous fluid in a uniform
elastic medium under condition of plane strain.  More precisely,
denoting by $(x,y,z)$ the standard coordinates in $\R^3$, we consider
a fracture which is invariant with respect to one variable (say $z$)
and symmetric with respect to another direction (say $y$).  The
fracture can then be entirely described by its opening $u(x,t)$ in the
$y$ direction (see Figure \ref{fig:1}).
\begin{figure}\label{fig:1} 
\begin{center}
\includegraphics[height=5cm]{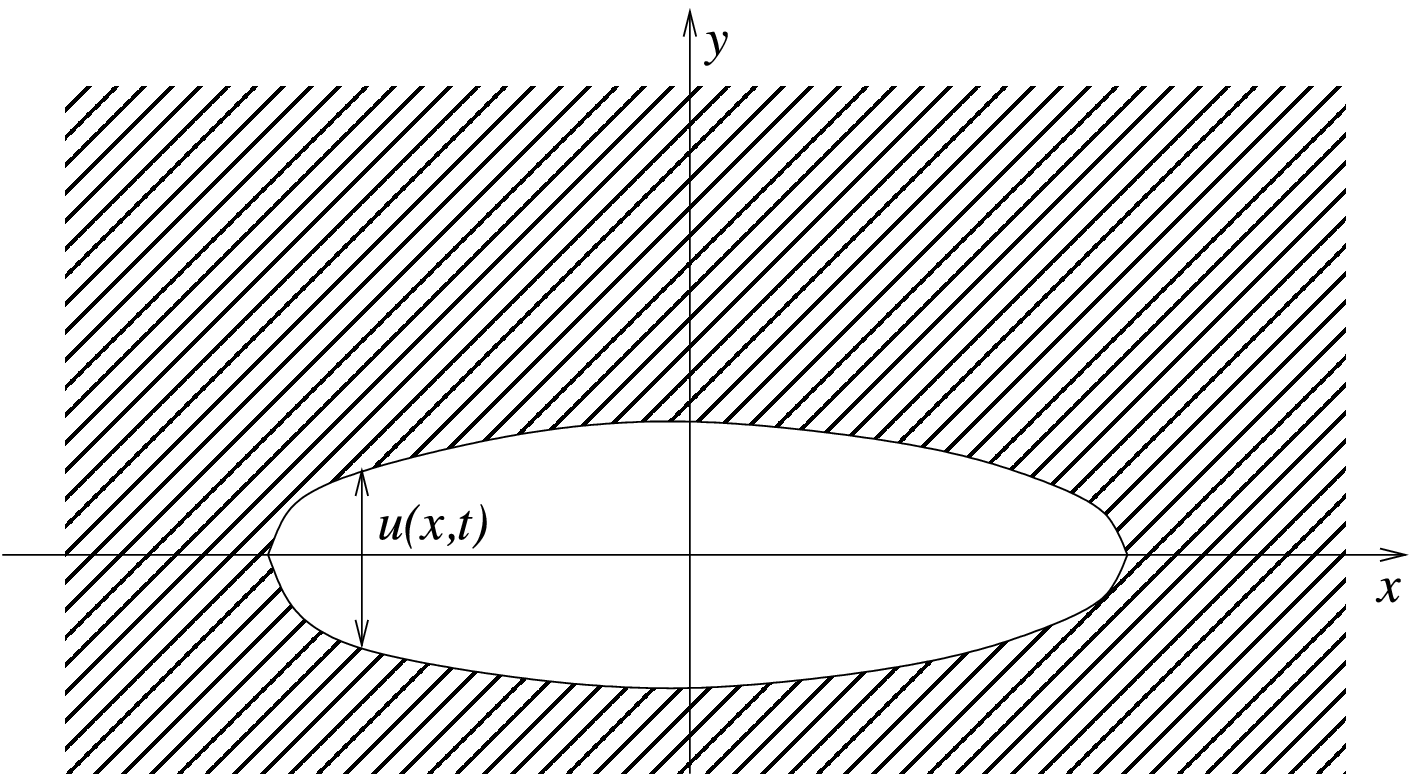}
\end{center}
\end{figure}
Since it assumes that the fracture is an infinite strip whose
cross-sections are in a state of plane strain, this model is only applicable
to rectangular planar fracture with large aspect ratio.

We now briefly describe the main steps of the derivation of (\ref{eq:0}).

\subsection{Conservation of mass and Poiseuille law}

The conservation of mass for the fluid inside the fracture, averaged
with respect to $y$ yields:
\begin{equation}\label{eq:conservation}
 \partial_t (\rho u) + \partial_x q = 0 \quad \text{ in } \R
\end{equation}
where $\rho$ is the density of the fluid (which is assumed to be constant)
and $q=q (x,t)$ denotes the fluid flux.
This flux is given by 
\begin{equation}\label{eq:flux}
q= \rho u \overline v
\end{equation}
where $\overline v$ is the $y$-averaged horizontal component of the
velocity of the fluid
$$ \overline v = \frac{1}{u}\int_{-u/2}^{u/2} v_H(t,x,y)\, dy.$$
Under the lubrification approximation, Navier-Stokes equations reduce
to
$$ 
\mu \frac{\partial^2 v_H}{\partial y^2}(t,x,y) = \partial_x p(x,t)
$$
where $p$ denotes the pressure of the fluid at a point $x$ and $\mu$
denotes the viscosity coefficient.  Assuming a no-slip boundary
condition $v=0$ at $y=\pm u/2$, we deduce
$$ 
v_H (t,x,y)=\frac{1}{\mu} \partial_x p
\left[\frac{1}{2}y^2-\frac{1}{8}u^2\right] \quad \mbox{ for $-u\leq y
  \leq u$}
$$
and so
$$
\overline v (x,t) = - \frac{u^2}{12 \mu} \partial_x p(x,t)
$$
Using (\ref{eq:flux}), we deduce {\it Poiseuille law}
\begin{equation}\label{loi d'etat}
q  = - \frac{u^3}{12 \mu} \partial_x p\, .
\end{equation}
Together with (\ref{eq:conservation}), this implies
$$  \pa_t u -\pa_x\left(\frac{u^3}{12\mu}\pa_x p\right)=0.$$
In order to obtain (\ref{eq:main}), it only remains to express the
pressure $p$ as a function of the displacement $u$ (i.e. $p=-I(u)$).

\subsection{The pressure law}
For a state of plane strain, the elasticity equation expresses the
pressure as a function of the fracture opening. After a rather
involved computation, one can derive the following nonlocal expression
(see \cite{CS83}):
$$p(x,t)=-\frac{E'}{4\pi} \int_\R \frac{\pa_x u(z,t)}{z-x}\, dz$$
where $E'$ denotes Young's modulus.  Denoting by $\mathcal H$ the
Hilbert transform, we can rewrite this formula as
$$p(x,t)=\frac{E'}{4}\mathcal H(\pa_x u) = \frac{E'}{4} (-\Delta)^{1/2} u (x,t)$$
where $(-\Delta)^{1/2}$ is the half-Laplace operator, defined, for
instance, using the Fourier transform by $ \mathcal F((-\Delta)^{1/2}
u)=|\xi| \mathcal F(u)$.  \medskip

It is well known that the half Laplace operator can also be defined as
a Dirichlet-to-Neumann map for the harmonic extension. More precisely,
the pressure is given by
\begin{equation}\label{p as a function of displacement}
p (x) = \frac{E'}{4} \partial_y v (x,0) 
\end{equation}
where $v$ solves
\begin{equation}\label{eq elasticity}
\left\{ \begin{array}{ll}
- \Delta v  = 0 & \text{ in } \R \times (0,+\infty), \\
v (x,0) = u (x,t), & \text{ on } \R \, . 
\end{array} \right.
\end{equation}
By taking advantage of the symmetry
of the problem, the function $v(x,y)$ can be interpreted as the displacement 
of the rock. 
Denoting $I(u)=-(-\Delta)^{1/2}u$, 
we deduce $ p= -\frac{E'}{4} I(u)$ and so
$$  \pa_t u +\frac{E'}{48\mu} \pa_x\left(u^3\pa_x I(u)\right)=0.$$

\paragraph{A technical assumption.} In order to reduce the technicality
of the analysis, we will assume that the crack is periodic with
respect to $x$. Since we expect compactly supported initial data to
give rise to compactly supported solutions whose supports expand with
finite speed, this is a reasonnable physical assumption. We also
assume that the initial crack is even with respect to $x=0$ and we
look for solutions that are also even.

By making such assumptions (periodicity and evenness), we can replace
\eqref{eq elasticity} with the following boundary value problem
$$
\left\{
\begin{array}{ll}
-\Delta v = 0 & \mbox{ in }  \Omega \times  (0,\infty) \\
v_\nu = 0 & \mbox{ on } \pa\Omega \times(0,\infty) \\
v(x,0)= u(x) &\mbox{ on } \Omega 
\end{array}
\right.
$$
with $\Omega=(0,1)$ if the period of the initial crack is $2$. The cylinder 
$\Omega \times (0,+\infty)$ is denoted by $C$ in the remaining
of the paper.

\paragraph{Mathematical definition of the pressure.}
It turns out that it is easier to define first the operator $I$ by
using the spectral decomposition of the Laplace operator: We take
$\{\lambda_k,\vphi_k\}$ the eigenvalues and corresponding eigenvectors
of the Laplacian operator in $\Omega$ with Neumann boundary conditions
on $\pa\Omega$:
$$
\left\{
\begin{array}{ll}
-\Delta \vphi_k = \lambda_k\vphi_k & \mbox{ in } \Omega\\
\pa_\nu \vphi_k = 0 & \mbox{ on } \pa\Omega.
\end{array}
\right. 
$$
We then define the operator $I$ by
$$
I(u):=\sum_{k=0}^{\infty} c_k \vphi_k(x) \mapsto - \sum_{k=0}^{\infty}
c_k \lambda_k^{\frac12} \vphi_k(x)
$$
which maps $H^1(\Omega)$ onto $L^2(\Omega)$. We will prove that this
operator can be characterized as a Dirichlet-to-Neumann map (see
Proposition~\ref{prop:dir-to-neum}) and that it  has an integral representation
as well (see Proposition~\ref{prop:integral-rep}).

\subsection{Boundary conditions}

The opening $u$ is solution of (\ref{eq:main}) in its
support. The model must thus be supplemented with some {\it boundary
  conditions} at the tip of the fracture.
 
Assuming that $\mathrm{Supp} (u(t,\cdot))=[-\ell(t),\ell(t)]$, it is 
usually assumed that
$$ 
u=0,\qquad u^3 \pa_x p = 0 \qquad \mbox{ at } x=\pm \ell (t)
$$
which ensures zero width and zero fluid loss at the tip.

We point out that the model is a free boundary problem, since the
support $[-\ell(t),\ell(t)]$ of the fracture is not known a priori.
Since the equation is of order $3$, those two conditions are not
enough to fully determine a solution.  In fact, there should be an
additional condition which takes into account the energy required to
break the rock at the tip of the crack.  Consistent with linear
elastic fracture propagation, we can assume that the rock toughness
$K_{Ic}$ equals the stress intensity factor $K_I$.  When the crack
propagation is determined by the toughness of the rock, a formal
asymptotic analysis of fracture profile at the tip (see
\cite{ad,ddlp}) then shows that
\begin{equation}\label{eq:fb} 
u(x,t) \sim \frac{K'}{E'} \sqrt{\ell(t)-x}  \qquad \mbox{ as } x\rightarrow \ell(t)
\end{equation}
with $K'=4\sqrt{\frac{2}{\pi}} K_{Ic}$ (and a similar condition for
$x\rightarrow -\ell$).  One can now take this condition on the profile
of $u$ at the tip as the missing free boundary condition.  The
resulting free boundary problem is clearly very delicate to study
(remember that (\ref{eq:0}) is a third order degenerate non-local
parabolic equation).
\medskip

A particular case which is simpler and still interesting is the case of zero
toughness ($K_{Ic}=0$).  This is relevant mainly if there is a
pre-fracture (i.e. the rock is already cracked, even though $u=0$
outside the initial fracture).  Mathematically speaking, this means
that Equation~(\ref{eq:0}) is now satisfied everywhere in $\R$ even
though $u$ is expected to have compact support. No free boundary
conditions are necessary.  This is the problem that we are considering
in this paper.

Note that one should then have $\lim_{x\rightarrow \ell}
(\ell(t)-x)^{-1/2} u(x,t)=0$ at the tip of the crack.  In fact, formal
arguments show that the asymptotic behavior of the fracture opening
near the fracture tip should be proportional to $ (\ell(t)-x)^{2/3}$
(see \cite{ad,ddlp}).

This approach is very similar to what is usually done with the porous
media equation, and it has been used successfully in the case of the
thin film equation to prove the existence of solutions with zero
contact angle (in that case, we speak of precursor film, or
pre-wetting). The study of the free boundary problem with free
boundary condition (\ref{eq:fb}) would be considerably more difficult
(one would expect the gradient flow approach developed by F. Otto
\cite{Otto98} for the thin film equation with non zero contact angle
to yield some result when $n=1$).

\section{Preliminaries}\label{sec:pre}

In this section, we define the operator $I$ and give the functional
analysis results that will play an important role in the proof of the
main theorem.  A very similar operator, with Dirichlet boundary
conditions rather than Neumann boundary conditions, was studied by
Cabr\'e and Tan \cite{ct09}.  This section follows their analysis very
closely.

\subsection{Functional spaces}

\paragraph{The space $H^s_N (\Omega)$.}
We denote by $\{\lambda_k,\vphi_k\}_{k= 0,1,2\dots}$ the eigenvalues
and corresponding eigenfunctions of the Laplace operator in $\Omega$
with Neumann boundary conditions on $\pa\Omega$:
\begin{equation}\label{eq:eigen}
\left\{
\begin{array}{ll}
-\Delta \vphi_k = \lambda_k\vphi_k & \mbox{ in } \Omega\\
\pa_\nu \vphi_k = 0 & \mbox{ on } \pa\Omega,
\end{array}
\right. 
\end{equation}
normalized so that $\int_\Omega \vphi_k^2\, dx=1$.  When $\Omega =
(0,1)$, we have
$$
\lambda_0=0\, , \qquad \vphi_0(x) = 1
$$
and
$$ 
\lambda_k = (k\pi)^2 \, , \qquad \vphi_k(x) = \sqrt 2 \cos(k\pi x)\,
\qquad k= 1,\,2,\, 3, \, \dots
$$
The $\vphi_k$'s clearly form an orthonormal basis of $L^2(\Omega)$.
Furthermore, the $\vphi_k$'s also form an orthogonal basis of the
space $H^s_N(\Omega)$ defined by
$$
H^s_N(\Omega) = \left\{u=\sum_{k=0}^\infty c_k\vphi_k \, ;\,
  \sum_{k=0}^\infty c_k^2(1+\lambda_k^s) <+\infty \right\}
$$
equipped with the norm
$$ 
||u||_{H^s_N(\Omega)}^2= \sum_{k=0}^\infty c_k^2(1+\lambda_k^s) 
$$
or equivalently (noting that $c_0 = \|u\|_{L^1(\Omega)}$ and
$\lambda_k\geq 1$ for $k\geq 1$):
$$ 
||u||_{H^s_N(\Omega)}^2= \|u\|_{L^1 (\Omega)}^2 + \| u\|^2_{\dot{H}^s_N (\Omega)}
$$
where the homogeneous norm is given by:
$$
\| u\|^2_{\dot{H}^s_N (\Omega)} = \sum_{k=1}^\infty c_k^2\lambda_k^s .
$$

\paragraph{A characterisation of $H^s_N (\Omega)$.}
The precise description of the space $H^s_N(\Omega)$ is a classical
problem.

Intuitively, for $s<3/2$, the boundary condition $u_\nu=0$ does not
make sense, and one can show that (see Agranovich and Amosov
\cite{aa03} and references therein):
$$ 
H^s_N(\Omega)=H^s(\Omega) \quad \mbox{ for all } 0\leq s<\frac32.
$$
In particular, we have $H^\frac{1}{2}_N(\Omega)=\ha$ and we will see
later that 
$$
\|u\|^2_{\dot{H}^{\frac12} (\Omega)} = \int_\Omega \int_\Omega (u(y)-u(x))^2 \nu (x,y) dx dy
$$
where $\nu (x,y)$ is a given positive function; see \eqref{defi:nu} below.

For $s>3/2$, the Neumann condition has to be taken into account, and
we have in particular
$$  
H^2_N(\Omega) = \{ u\in H^2(\Omega)\, ;\, u_\nu=0 \mbox{ on }\pa\Omega\}
$$
which will play a particular role in the sequel.  More generally, a similar
characterization holds  for $3/2<s<7/2$. For $s>7/2$, additional
boundary conditions have to be taken into account.

The case $s=3/2$ is critical (note that $u_\nu|_{\pa\Omega}$ is not
well defined in that space) and one can show that
\begin{eqnarray*} 
  H^{\frac32}_N(\Omega) 
  & = & \left\{ u\in H^{\frac32}(\Omega)\, ;\, \int_\Omega \frac{u_x^2}{d(x)}\, dx <\infty\right\}  
\end{eqnarray*}
where $d(x)$ denotes the distance to $\pa\Omega$. A similar result
appears in \cite{ct09}; more precisely, such a characterization of
$H^{\frac32}_N(\Omega)$ can be obtained by considering functions $u$
such that $u_x \in \mathcal V_0(\Omega)$ where $\mathcal V_0(\Omega)$
is defined in \cite{ct09} as the equivalent of our space
$H^{1/2}_N(\Omega)$ with Dirichlet rather than Neumann boundary
conditions.  We do not dwell on this issue since we will not need this
result in our proof.

\subsection{The operator $I$}

As it is explained in the Introduction, the operator $I$ is related to
the computation of the pressure as a function of the
displacement. From this point of view, the operator $I$ is a
Dirichlet-to-Neumann operator associated with the Laplacian. Since we
study the problem in a periodic setting we explained that this yields
to consider Neumann boundary conditions on a cylinder $C=\Omega\times
(0,+\infty)$.

\paragraph{Spectral definition.} It is convenient to begin with the
spectral definition of the operator $I$: With $\lambda_k$ and
$\vphi_k$ defined by (\ref{eq:eigen}), we define the operator
\begin{equation}\label{eq:Idef}
  I: \sum_{k=0}^{\infty} c_k \vphi_k \; \longmapsto \; 
- \sum_{k=0}^{\infty} c_k \lambda_k^{\frac12} \vphi_k
\end{equation}
which clearly maps $H^1(\Omega)$ onto $L^2(\Omega)$ and
$H^2_N(\Omega)$ onto $H^1(\Omega)$.

\paragraph{Dirichlet-to-Neuman map.} 
With the spectral definition in hand, we are now going to show that
$I$ can also be defined as the Dirichlet-to-Neumann operator
associated with the Laplace operator supplemented with Neumann
boundary conditions.

To be more precise, we consider the following boundary problem in the
cylinder $C=\Omega \times (0,+\infty)$:
\begin{equation}\label{eq:exp}
\left\{
\begin{array}{ll}
-\Delta v = 0 & \mbox{ in } C, \\
v(x,0)= u(x) &\mbox{ on } \Omega, \\
v_\nu = 0 & \mbox{ on } \pa \Omega \times (0,\infty).
\end{array}
\right.
\end{equation}
We will show that we have
$$ I(u)=\pa_y v(\cdot,0).$$
We start with the following result which show the existence of a
unique harmonic extension $v$:
\begin{proposition}[Existence and uniqueness for \eqref{eq:exp}]
  For all $u\in H^{\frac12}_N(\Omega)$, there exists a unique extension $v\in
  H^{1}(C)$ solution of (\ref{eq:exp}).  
  \item Furthermore, if
  $u(x)=\sum_{k=1}^{\infty} c_k \vphi_k(x)$, then
\begin{equation}\label{eq:v}
v(x,y)=\sum_{k=1}^{\infty} c_k \vphi_k(x) \exp(-\lambda_k^{\frac12}y).
\end{equation}
\end{proposition}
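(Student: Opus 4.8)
The plan is to prove existence and uniqueness via a standard energy/variational argument on the cylinder $C=\Omega\times(0,\infty)$, and then identify the solution with the explicit series by separation of variables. First I would set up the natural function space: let $\mathcal{H}$ be the space of $v\in H^1_{loc}(\overline C)$ with $\nabla v\in L^2(C)$ and suitable decay, or more concretely work with the trace space and the Dirichlet integral $\int_C |\nabla v|^2\,dx\,dy$. Since the equation is $-\Delta v=0$ with a Neumann condition on the lateral boundary $\partial\Omega\times(0,\infty)$ and a Dirichlet condition $v(\cdot,0)=u$ on the bottom, the variational formulation is: minimize $\int_C|\nabla v|^2$ over all $v$ with $v(\cdot,0)=u$ (the lateral Neumann condition being natural, hence not imposed). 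Existence and uniqueness of the minimizer follows from the direct method once one knows the admissible set is nonempty and that the Dirichlet integral controls the relevant norm; the Euler--Lagrange equation gives exactly \eqref{eq:exp} in the weak sense.

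The key quantitative step is to show the admissible set is nonempty for $u\in H^{1/2}_N(\Omega)$ and, simultaneously, to compute the minimal energy. Here I would use the spectral decomposition directly: write $u=\sum_k c_k\varphi_k$ and define $v$ by the series \eqref{eq:v}, i.e. $v(x,y)=\sum_k c_k\varphi_k(x)e^{-\lambda_k^{1/2}y}$ (with the $k=0$ term, if $c_0\neq0$, handled separately as the constant $c_0$, which is harmonic, satisfies the boundary conditions, and contributes nothing to the gradient). A termwise computation gives
$$\int_C|\nabla v|^2\,dx\,dy=\sum_{k\geq1}c_k^2\Big(\int_0^\infty(\lambda_k^{1/2})^2 e^{-2\lambda_k^{1/2}y}\,dy+\lambda_k\int_0^\infty e^{-2\lambda_k^{1/2}y}\,dy\Big)=\sum_{k\geq1}c_k^2\lambda_k^{1/2}=\|u\|_{\dot H^{1/2}_N(\Omega)}^2,$$
using $\int_\Omega\varphi_k^2=1$ and $\int_\Omega\varphi_k'\varphi_j'=\lambda_k\delta_{kj}$. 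This shows $v\in H^1(C)$ (the $L^2(C)$ bound near $y=0$ coming from the trace plus the gradient bound, or again termwise since $\int_0^\infty e^{-2\lambda_k^{1/2}y}\,dy=\tfrac12\lambda_k^{-1/2}$), so the admissible set is nonempty precisely when $u\in H^{1/2}_N(\Omega)$. One then checks that this $v$ is harmonic (each term is, and the series converges in $H^1_{loc}$), satisfies the lateral Neumann condition (each $\varphi_k$ does), and has trace $u$; hence it is the variational minimizer and, by uniqueness of the latter, the unique $H^1(C)$ solution of \eqref{eq:exp}.

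For uniqueness I would argue that if $v_1,v_2$ are two $H^1(C)$ solutions, then $w=v_1-v_2$ is harmonic with $w(\cdot,0)=0$ and lateral Neumann condition, so multiplying by $w$ and integrating by parts (all boundary terms vanish: the bottom because $w=0$ there, the lateral because $w_\nu=0$, and the behavior at $y\to\infty$ because $w\in H^1(C)$ forces $\nabla w\to0$ in an integrated sense) gives $\int_C|\nabla w|^2=0$, so $w$ is constant, and then $w=0$ since $w(\cdot,0)=0$. The main obstacle, and the point needing the most care, is the justification of the integration by parts on the unbounded cylinder and the precise sense in which boundary traces exist — in particular ensuring no contribution escapes as $y\to\infty$; this is where one should invoke the density of nice functions in $H^1(C)$ or a cutoff argument in $y$, together with the fact that $v\in H^1(C)$ already encodes the necessary decay. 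Once that technical point is settled, the rest is the routine spectral computation sketched above.
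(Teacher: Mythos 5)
Your approach is the same as the paper's: existence and uniqueness via the direct method applied to the Dirichlet integral $\int_C|\nabla v|^2$ with the trace constraint $v(\cdot,0)=u$ (Neumann being natural), followed by the termwise spectral computation showing the series has finite Dirichlet energy equal to $\|u\|^2_{\dot H^{1/2}(\Omega)}$, which simultaneously proves the admissible set is nonempty and identifies the minimizer with the explicit formula. Your added detail on uniqueness via integration by parts on the unbounded cylinder is a fine unpacking of what the paper leaves as ``classical arguments.''

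One small inaccuracy to flag: your parenthetical claim that the $k=0$ mode can be ``handled separately as the constant $c_0$'' and that it ``contributes nothing to the gradient'' is true for the Dirichlet energy, but a nonzero constant on the semi-infinite cylinder $C$ is not in $L^2(C)$, so $v$ would fail to be in $H^1(C)$ in the standard sense if $c_0\neq 0$. The paper sidesteps this by writing the sum in \eqref{eq:v} starting at $k=1$, i.e.\ effectively normalizing to mean-zero $u$ (and indeed only the gradient of $v$ plays any role downstream, since $I(u)=\partial_y v(\cdot,0)$ is unaffected by adding a constant). You should either impose $c_0=0$ as the paper implicitly does, or state that ``$H^1(C)$'' here really means the space with $\nabla v\in L^2(C)$ modulo constants; otherwise the claim $v\in H^1(C)$ is literally false for non-mean-zero $u$.
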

\begin{proof}
We recall that $H^{\frac12}_N(\Omega)=H^{\frac12}(\Omega)$,
    and for a given $u\in H^{\frac12}(\Omega)$ we consider the following
    minimization problem:
$$ 
\inf \left\{ \int_C |\na w|^2\, dx\, dy \,;\, w\in H^1(C)\,,\;
  w(\cdot,0)=u\mbox{ on }\Omega\right\}.
$$
Using classical arguments, one can show that this problem has a unique
minimizer $v$ (note that the set of functions on which we minimize the
functional is not empty).  This minimizer is a weak solution of
\eqref{eq:exp}. In particular, it satisfies
$$ 
\int_C \na v\cdot \na w\, dx\, dy=0
$$
for all $w\in H^1(\Omega)$ such that $w(\cdot,0)=0$ on $\Omega$, which
includes a weak formulation of the Neumann condition.

Finally, the representation formula (\ref{eq:v}) follows from a
straightforward computation. Indeed, we have
\begin{eqnarray*}
  \int_0^\infty \int_\Omega |\na v|^2\, dx\, dy & =& \int_0^\infty \int_\Omega  |\pa_x v|^2 + |\pa_y v|^2 \, dx\, dy \\
  & =& 2\sum_{k=1}^\infty c_k^2\lambda_k \int_0^\infty \exp(-2\lambda_k^{1/2}y)\, dy\\
  & =& 2 \sum_{k=1}^\infty c_k^2\lambda_k \frac{1}{2\lambda_k^{1/2}}\\
  & = & \sum_{k=1}^\infty c_k^2\lambda_k^{1/2} = ||u||^2_{\dot{H}^{\frac12}(\Omega)}
\end{eqnarray*}
which shows that $v$ belongs to $H^1(C)$. 
The fact that $v$ satisfies \eqref{eq:exp} is  easy to check.
  \end{proof}
  We can now show:
\begin{proposition}[The operator $I$ is of Dirichlet-to-Neumann
type] \label{prop:dir-to-neum} For all $u\in H^2_N(\Omega)$, we
  have
$$
I(u)(x)= - \frac{\pa v}{\pa\nu}(x,0) =\pa_y v(x,0)\quad \mbox{ for all $x \in \Omega$,}
$$
where $v$ is the unique harmonic extension solution of (\ref{eq:exp}).
\item Furthermore $I\circ I (u)= -\Delta u$.
\end{proposition}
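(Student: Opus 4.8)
The plan is to verify the Dirichlet-to-Neumann characterization by explicit computation on the spectral expansion, which is legitimate because for $u \in H^2_N(\Omega)$ the coefficients $c_k$ decay fast enough ($\sum c_k^2 \lambda_k^2 < \infty$) that all the series below converge in the relevant norms. First I would take $u = \sum_{k\geq 0} c_k \vphi_k \in H^2_N(\Omega)$ and write down its harmonic extension $v$ using the representation formula from the previous proposition, namely $v(x,y) = c_0 + \sum_{k\geq 1} c_k \vphi_k(x) e^{-\lambda_k^{1/2} y}$ (the $k=0$ term is the constant $c_0$, which is harmonic and matches the Neumann datum, and contributes nothing to $\pa_y v$). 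Differentiating termwise in $y$ gives $\pa_y v(x,y) = -\sum_{k\geq 1} c_k \lambda_k^{1/2} \vphi_k(x) e^{-\lambda_k^{1/2} y}$; the key point to check is that this differentiated series converges in, say, $H^1(\Omega)$ uniformly for $y$ in compact subsets of $[0,\infty)$, so that termwise differentiation is justified and $\pa_y v(\cdot,0)$ makes sense as an element of $H^1(\Omega)$. This follows from the estimate $\sum_{k\geq 1} c_k^2 \lambda_k^{1/2 \cdot 2} \lambda_k^{1/2 \cdot 2} = \sum_{k\geq 1} c_k^2 \lambda_k^2 < \infty$, using $u \in H^2_N(\Omega)$. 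Setting $y=0$ then yields $\pa_y v(x,0) = -\sum_{k\geq 1} c_k \lambda_k^{1/2} \vphi_k(x)$, which is exactly $I(u)(x)$ by the spectral definition \eqref{eq:Idef} (again the $k=0$ term is absent because $\lambda_0 = 0$). The identification $I(u) = -\pa v/\pa\nu(\cdot,0)$ on $\{y=0\}$ is just the observation that the outward normal on the bottom face $\Omega \times \{0\}$ of the cylinder $C$ points in the $-y$ direction.

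For the second assertion, I would compute $I\circ I(u)$ directly from the spectral definition: applying \eqref{eq:Idef} twice gives $I(I(u)) = I\big(-\sum_k c_k \lambda_k^{1/2}\vphi_k\big) = \sum_k c_k \lambda_k^{1/2} \lambda_k^{1/2} \vphi_k = \sum_k c_k \lambda_k \vphi_k$. On the other hand $-\Delta u = \sum_k c_k \lambda_k \vphi_k$ by \eqref{eq:eigen}, so the two agree. One should note that this makes sense precisely on $H^2_N(\Omega)$: the intermediate function $I(u)$ lies in $H^1(\Omega)$ (since $I$ maps $H^2_N(\Omega)$ to $H^1(\Omega)$), but to apply $I$ again and land in $L^2(\Omega)$ one needs $I(u) \in H^1_N(\Omega) = H^1(\Omega)$, which holds, and the resulting series $\sum c_k \lambda_k \vphi_k$ converges in $L^2$ because $\sum c_k^2 \lambda_k^2 < \infty$; this is consistent with $-\Delta u \in L^2(\Omega)$ for $u \in H^2(\Omega)$.

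The main obstacle — such as it is — is the rigorous justification of termwise $y$-differentiation of the series for $v$ and the passage to the trace at $y=0$: a priori $v$ is only known to be in $H^1(C)$, and $H^1(C)$ does not by itself control $\pa_y v(\cdot,0)$ as an $H^1(\Omega)$ function. The resolution is to use the extra regularity $u \in H^2_N(\Omega)$ to show the series $\sum_{k\geq 1} c_k \lambda_k^{1/2} \vphi_k(x) e^{-\lambda_k^{1/2}y}$ and its $x$-derivative converge uniformly on $\Omega \times [0, Y]$ for each $Y > 0$ (geometric-type decay of $e^{-\lambda_k^{1/2} y}$ together with summability of $c_k^2 \lambda_k^2$ and Cauchy–Schwarz handles this), so that $v$ is as smooth as needed near the bottom face and the termwise evaluation at $y = 0$ is valid. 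Everything else is bookkeeping with orthonormal expansions.
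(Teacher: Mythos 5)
Your proof is correct and follows essentially the same route as the paper: use the spectral expansion of $u$ together with the explicit representation \eqref{eq:v} of the harmonic extension, differentiate termwise in $y$, evaluate at $y=0$ to recover the spectral definition \eqref{eq:Idef}, and apply \eqref{eq:Idef} twice for the second assertion. The paper disposes of this with a one-line ``direct computation using \eqref{eq:v}''; you supply the convergence bookkeeping (using $\sum c_k^2\lambda_k^2<\infty$) that justifies the termwise differentiation and the trace at $y=0$, which is the right thing to flag but does not change the argument.
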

\begin{proof}
This follows from a direct computation using (\ref{eq:v}).
Furthermore, if $u$ is in $H^2_N(\Omega)$, we know that
 $ \sum_{k=0}^{\infty} c_k^2 \lambda_k^2<\infty$. It is now
easy to derive the following equality
$$
I(I(u)) =  \sum_{k=0}^{\infty} c_k \lambda_k \vphi_k(x) = -\Delta u.
$$
\end{proof}

\paragraph{Integral representation.}
The operator $I$ can also be represented as a singular integral operator. 
Indeed, we will prove the following
\begin{proposition}\label{prop:integral-rep}
Consider a smooth function $u: \Omega \to \R$. Then for all $x \in \Omega$,
$$
I(u)(x) = \int_\Omega (u(y) - u(x)) \nu (x,y) dy 
$$
where $\nu(x,y)$ is defined as follows: for all
$x,y \in \Omega$, 
\begin{equation}\label{defi:nu}
  \nu (x,y) =  \frac{\pi}2 \left( \frac{1}{1 - \cos (\pi (x-y))}+ \frac{1}{1 - \cos (\pi (x+y))} \right).
\end{equation}
\end{proposition}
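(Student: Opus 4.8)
The plan is to compute the kernel by explicitly solving the harmonic extension problem \eqref{eq:exp} via the Poisson-type kernel on the half-cylinder $C=\Omega\times(0,\infty)$, and then reading off the Dirichlet-to-Neumann map from Proposition~\ref{prop:dir-to-neum}. First I would use the representation \eqref{eq:v}: for a smooth $u=\sum_{k\ge 0} c_k\vphi_k$ we have $v(x,y)=\sum_{k\ge 1}c_k\vphi_k(x)e^{-k\pi y}$ (with $\lambda_k=(k\pi)^2$, $\vphi_k=\sqrt2\cos(k\pi x)$), so that
$$
I(u)(x)=\pa_y v(x,0)=-\sum_{k\ge 1} c_k\,k\pi\,\vphi_k(x).
$$
Writing $c_k=\int_\Omega u(z)\vphi_k(z)\,dz$ and formally exchanging sum and integral gives
$$
I(u)(x)=-\int_\Omega u(z)\Big(\sum_{k\ge1} k\pi\,\vphi_k(x)\vphi_k(z)\Big)dz
=-\int_\Omega u(z)\,K(x,z)\,dz,
$$
with $K(x,z)=2\pi\sum_{k\ge1}k\cos(k\pi x)\cos(k\pi z)$. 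The next step is to evaluate this trigonometric series. Using $2\cos a\cos b=\cos(a-b)+\cos(a+b)$, it reduces to computing $\sum_{k\ge1}k\cos(k\theta)$ for $\theta=\pi(x-z)$ and $\theta=\pi(x+z)$. This sum is not summable in the classical sense, so I would interpret it as an Abel limit: $\sum_{k\ge1}k r^k\cos(k\theta)=\mathrm{Re}\,\frac{re^{i\theta}}{(1-re^{i\theta})^2}$, and let $r\uparrow1$, obtaining (away from $\theta\in2\pi\mathbb Z$) the value $-\frac{1}{2(1-\cos\theta)}$. This is exactly where the expression \eqref{defi:nu} comes from, since $-\pi\cdot\big(-\frac{1}{2(1-\cos\pi(x-z))}-\frac{1}{2(1-\cos\pi(x+z))}\big)=\nu(x,z)$. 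Thus, purely formally, $I(u)(x)=\int_\Omega u(z)\nu(x,z)\,dz$ as a principal value; to get the stated (more symmetric, singularity-tamed) form I then use that the ``constant'' mode is annihilated, i.e. $I(1)=0$, which means $\int_\Omega \nu(x,z)\,dz$ must be subtracted off — leading to the identity $I(u)(x)=\int_\Omega(u(z)-u(x))\nu(x,z)\,dz$, now with an integrable singularity of order $|x-z|^{-2}$ against the $C^1$ factor $u(z)-u(x)=O(|x-z|)$, so no principal value is needed once $u$ is smooth.

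To make this rigorous rather than formal, I would argue as follows. Fix $x\in\Omega$. For $r<1$, the function $v_r(x,y):=\sum_{k\ge1}c_k r^k\vphi_k(x)e^{-k\pi y}$ is a genuine smooth harmonic function, and an interchange of the (now absolutely convergent) sum and the $z$-integral gives $\pa_y v_r(x,0)=-\int_\Omega u(z)K_r(x,z)\,dz$ with $K_r(x,z)=2\pi\sum_{k\ge1}kr^k\cos(k\pi x)\cos(k\pi z)$, an explicit rational function of $r$ and the cosines computed above. Using $\int_\Omega K_r(x,z)\,dz=0$ (orthogonality kills every $k\ge1$ term against $\vphi_0=1$), I rewrite $-\int_\Omega u(z)K_r(x,z)\,dz=-\int_\Omega(u(z)-u(x))K_r(x,z)\,dz$. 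Now $K_r(x,z)\to -\tfrac1\pi\nu(x,z)\cdot(-1)$... more precisely $-K_r(x,z)\to\nu(x,z)$ pointwise for $z\ne x$ (and $z\ne -x\bmod 2$, a measure-zero set); and one checks a uniform bound $|K_r(x,z)|\le C(x)|x-z|^{-2}$ valid for all $r$ close to $1$ (this is a calculus estimate on the explicit rational expression, using $1-\cos\theta\sim\theta^2/2$). Since $|u(z)-u(x)|\le \|u'\|_\infty|x-z|$, the integrand $(u(z)-u(x))K_r(x,z)$ is dominated by $C\|u'\|_\infty|x-z|^{-1}\in L^1(\Omega)$, so dominated convergence yields $\pa_y v_r(x,0)\to\int_\Omega(u(z)-u(x))\nu(x,z)\,dz$. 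Finally I identify the limit of the left-hand side: since $u$ is smooth, $c_k$ decays rapidly, so $v_r\to v$ and $\pa_y v_r(x,0)\to\pa_y v(x,0)=I(u)(x)$ by Proposition~\ref{prop:dir-to-neum}. Comparing the two limits gives the claimed formula.

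The main obstacle is the non-summability of $\sum k\cos(k\theta)$ and the attendant singular integral: one must be careful that the formal manipulations (swapping $\sum$ and $\int$, passing $r\to1$) are justified, which is exactly why I route everything through the Abel-regularized kernel $K_r$ and invoke the cancellation $\int K_r(x,\cdot)=0$ before taking limits — this is what converts a principal-value identity into an honest absolutely convergent integral. A secondary technical point is the uniform-in-$r$ bound $|K_r(x,z)|\lesssim |x-z|^{-2}$ near the diagonal (and the analogous harmless behavior near $z=-x\bmod 2$, which stays away from $\Omega$ except at the endpoints $x\in\{0,1\}$, where the two singularities of $\nu$ merge but the $u(z)-u(x)$ factor still controls them); this requires differentiating the closed-form expression $\mathrm{Re}\,\frac{re^{i\theta}}{(1-re^{i\theta})^2}$ and estimating, but it is routine once set up. Everything else — smoothness of $v_r$, rapid decay of $c_k$, orthogonality — is standard.
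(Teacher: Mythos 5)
Your approach is genuinely different from the paper's. You work on the spectral side: expand $I(u)(x)=-\sum_k c_k\lambda_k^{1/2}\vphi_k(x)$, insert $c_k=\int u\vphi_k$, recognize the resulting (divergent) kernel series $\sum k\cos(k\pi x)\cos(k\pi z)$, and close the form by Abel summation $\sum_{k\ge1}kr^k\cos k\theta=\mathrm{Re}\,\frac{re^{i\theta}}{(1-re^{i\theta})^2}\to-\frac{1}{2(1-\cos\theta)}$, using the orthogonality cancellation $\int_\Omega K_r(x,\cdot)\,dz=0$ to pass to the subtracted form. The paper instead works on the extension side: take the even extension of $u$ to $(-1,1)$, observe that $e^{i\pi z}$ maps the half-cylinder conformally to the unit disk, write the harmonic extension $w(x,y)$ by the Poisson integral, and compute $\partial_y w(x,0)$ as a limit of difference quotients of this closed formula (the analogue of your $\int K_r(x,\cdot)=0$ is the normalization identity obtained by plugging $w\equiv1$). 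Both routes produce the same rational kernel; yours is more Fourier-analytic and self-contained (no conformal mapping), while the paper's dispenses with divergent series entirely by always working with a convergent Poisson integral for $y>0$. Either is an acceptable way to arrive at Proposition~\ref{prop:integral-rep}.

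There is, however, a genuine gap in your limit step, and it is worth flagging because it touches exactly the point the paper itself treats only by a side remark. You claim that $(u(z)-u(x))K_r(x,z)$ is dominated by $C\|u'\|_\infty|x-z|^{-1}\in L^1(\Omega)$ and then invoke dominated convergence; but $|x-z|^{-1}$ is \emph{not} integrable on $\Omega$ near $z=x$, so this argument does not go through. The underlying reason is that the formula $\int_\Omega(u(z)-u(x))\nu(x,z)\,dz$ is not an absolutely convergent integral for generic smooth $u$: near $z=x$ one has $\nu(x,z)=\frac{1}{\pi(z-x)^2}+O(1)$ and $u(z)-u(x)=u'(x)(z-x)+O((z-x)^2)$, so the integrand carries a genuinely non-integrable $\frac{u'(x)}{\pi(z-x)}$ piece, and the integral only makes sense as a principal value (the paper's proof makes this explicit for the intermediate Poisson-kernel formula, where it is automatic because the singular part of the kernel is even in $z-x$ and the odd $O(z-x)$ contribution cancels under the symmetric limit). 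To repair your argument you should Taylor-expand one more order, $u(z)-u(x)=u'(x)(z-x)+R(z)$ with $R(z)=O((z-x)^2)$: the $R$ term is dominated by a constant near $z=x$ and dominated convergence applies cleanly; for the linear term you must show $\int_\Omega(z-x)K_r(x,z)\,dz$ converges as $r\to1$ to the principal value $\mathrm{PV}\!\int_\Omega(z-x)(-\nu(x,z))\,dz$, which follows from the (near-)evenness of the singular part of $K_r$ in $z-x$ together with a symmetric cut-off, not from dominated convergence. Once this is done — and the resulting statement is understood in the principal-value sense — your proof is complete and correct.
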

\begin{proof}
  We use the Dirichlet-to-Neumann definition of $I$. Let $v$ denote
  the solution of \eqref{eq:exp}. Then $v$ is the restriction to
  $(0,1)$ of the unique solution $w$ of \eqref{eq:exp} where $\Omega$
  is replaced with $(-1,1)$ and $u$ is replaced by its even extension
  to $(-1,1)$.  In particular, $w$ is even with respect to $x$. Then
  there exists a holomorphic function $W$ defined in the cylinder
  $(-1,1) \times (0,+\infty)$ such that $w = \mathrm{Re} (W)$.  Next,
  we consider the holomorphic function $z \mapsto e^{i\pi z}=e^{-\pi
    y} e^{i\pi x}$ defined on the cylinder $(-1,1) \times (0,+\infty)$
  with values into the unit disk $D_1 = \{ (x,y) : x^2 + y^2 < 1
  \}$. If $z$ denotes the complex variable $x+iy$, then a new
  holomorphic function $W_0$ is obtained by the following formula
$$
W(z) = W_0 (e^{i \pi z}).
$$
In particular, $W_0$ is defined and harmonic in $D_1$. This implies
that the function $W_0$ can be represented by the Poisson integral.
Precisely,
$$
W_0 (Z) = \frac{1 - |Z|^2}{2 \pi} \int_{\partial D_1} \frac{W_0(Y)}{|Y-Z|^2} d\sigma (Y). 
$$
This implies that for all $z \in C$,
$$
W(z) = \frac{1 - e^{-2 \pi y}}{2 \pi} \int_{-1}^1
\frac{W(\theta)}{|e^{i \pi \theta} -e^{-\pi y} e^{i\pi x}|^2} \pi d
\theta
$$
and we finally obtain
$$
w(x,y) =  \frac{1 - e^{-2 \pi y}}{2} \int_{-1}^1
\frac{w(\theta,0)}{|e^{i \pi \theta} -e^{-\pi y} e^{i\pi x}|^2}  d
\theta .
$$
Taking $w=1$, we get in particular the following equality:
$$
1 =  \frac{1 - e^{-2 \pi y}}{2 } \int_{-1}^1
\frac{1}{|e^{i \pi \theta} -e^{-\pi y} e^{i\pi x}|^2}  d
\theta .
$$
We deduce:
$$
\frac{w(x,y) - w(x,0)}y =  \frac{1 - e^{-2 \pi y}}{2 y} \int_{-1}^1
\frac{w(\theta,0) -w (x,0)}{|e^{i \pi \theta} -e^{-\pi y} e^{i\pi x}|^2}  d
\theta 
$$
which implies (letting $y$ go to zero):
$$
\partial_y w (x,0) = \pi \int_{-1}^1
\frac{w(\theta,0) -w (x,0)}{|e^{i \pi \theta} - e^{i\pi x}|^2}  d
\theta .
$$
The integral on the right hand side of the previous equality is
understood in the sense of the principal value of the associated
distribution.  We finally use the fact that $w$ is even in $x$ and is
equal to $u$ on $\Omega$ to obtain the following singular integral representation of $I(u)$:
$$
I (u)(x) = \pi \int_0^1 (u(\theta,0) -u (x,0)) \left( \frac{1}{|1 -
    e^{i\pi (x-\theta)}|^2} + \frac{1}{|1 - e^{i\pi (x+\theta)}|^2}
\right) d \theta .
$$
\end{proof}

\paragraph{The space $H^{-\frac12}(\Omega)$.} 
The space $H^{-\frac12}(\Omega)$ is defined as the topological dual
space of $H^{\frac12}(\Omega)$. It is classical that for any $u \in
H^{-\frac12}(\Omega)$, there exists $u_1 \in L^2 (\Omega)$ and $u_2
\in H^{\frac12}(\Omega)$ such that $u =u_1+ \partial_x u_2$ (in the
sense of distributions). We will also use repeatedly the following
elementary lemma, whose proof is left to the reader:
\begin{lemma}
  If $u \in H^{\frac12}(\Omega)$, then the distribution $I(u)$ is
in $H^{-\frac12}(\Omega)$ and for all $v \in
  H^{\frac12}(\Omega)$,
$$
\langle I (u), v \rangle_{H^{-\frac12}(\Omega),H^{\frac12}(\Omega)} 
= - \sum_{k=0}^{+\infty} \lambda_k^{\frac12} c_k d_k
$$
where $u = \sum_{k=0}^{+\infty} c_k \vphi_k$ and $v =
\sum_{k=0}^{+\infty} d_k \vphi_k$. In particular,
$$
-\langle I(u),u \rangle_{H^{-\frac12}(\Omega),H^{\frac12}(\Omega)} 
= ||u||^2_{\dot{H}^{\frac12}(\Omega)}.
$$
\end{lemma}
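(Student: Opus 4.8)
The plan is to reduce everything to the spectral description of $H^{\frac12}(\Omega)=H^{\frac12}_N(\Omega)$ together with a single Cauchy--Schwarz estimate. First I would record the spectral picture: writing $u=\sum_k c_k\vphi_k$, membership $u\in H^{\frac12}(\Omega)$ is equivalent to $\sum_k c_k^2(1+\lambda_k^{\frac12})<\infty$, and since the functions $(1+\lambda_k^{\frac12})^{-\frac12}\vphi_k$ form an orthonormal basis of $H^{\frac12}_N(\Omega)$, the Riesz representation theorem identifies $H^{-\frac12}(\Omega)$ with the weighted sequence space $\{\,f=\sum_k e_k\vphi_k\;:\;\sum_k e_k^2(1+\lambda_k^{\frac12})^{-1}<\infty\,\}$, where "$f=\sum_k e_k\vphi_k$" is shorthand for $\langle f,\vphi_k\rangle=e_k$ and the duality bracket is $\langle f,v\rangle=\sum_k e_k d_k$ for $v=\sum_k d_k\vphi_k$ (this last series converging in $H^{\frac12}(\Omega)$, so the identity follows from continuity of $f$).

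Next I would check that $I(u)\in H^{-\frac12}(\Omega)$. By the spectral definition \eqref{eq:Idef}, $I(u)=\sum_k e_k\vphi_k$ with $e_k=-c_k\lambda_k^{\frac12}$; since $\lambda_0=0$ and $\lambda_k\le \lambda_k^{\frac12}(1+\lambda_k^{\frac12})$ for every $k$, one gets
$$
\sum_{k\ge 0} e_k^2(1+\lambda_k^{\frac12})^{-1}=\sum_{k\ge 1} c_k^2\,\frac{\lambda_k}{1+\lambda_k^{\frac12}}\le \sum_{k\ge 1} c_k^2\lambda_k^{\frac12}\le \|u\|_{H^{\frac12}(\Omega)}^2,
$$
so $I$ maps $H^{\frac12}(\Omega)$ boundedly into $H^{-\frac12}(\Omega)$.

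Then the pairing formula follows by density. For $u,v$ that are finite linear combinations of the $\vphi_k$, $I(u)$ is a genuine $L^2(\Omega)$ function and, by orthonormality of $\{\vphi_k\}$ in $L^2(\Omega)$,
$$
\langle I(u),v\rangle_{H^{-\frac12}(\Omega),H^{\frac12}(\Omega)}=\intom I(u)\,v\,dx=-\sum_{k\ge 0}\lambda_k^{\frac12}c_k d_k.
$$
Both sides are continuous bilinear forms on $H^{\frac12}(\Omega)\times H^{\frac12}(\Omega)$ --- the left-hand side by the previous step, the right-hand side because $\bigl|\sum_k\lambda_k^{\frac12}c_kd_k\bigr|\le\bigl(\sum_k\lambda_k^{\frac12}c_k^2\bigr)^{\frac12}\bigl(\sum_k\lambda_k^{\frac12}d_k^2\bigr)^{\frac12}=\|u\|_{\dot{H}^{\frac12}(\Omega)}\|v\|_{\dot{H}^{\frac12}(\Omega)}$ by Cauchy--Schwarz --- and finite linear combinations of the $\vphi_k$ are dense in $H^{\frac12}_N(\Omega)$, so the identity extends to all $u,v\in H^{\frac12}(\Omega)$. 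Taking $v=u$ and recalling $\|u\|_{\dot{H}^{\frac12}(\Omega)}^2=\sum_{k\ge 1}c_k^2\lambda_k^{\frac12}$ gives the last assertion.

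I do not expect a genuine obstacle here; the only point requiring a little care is the identification of the abstract dual space $H^{-\frac12}(\Omega)$ with the weighted $\ell^2$ sequence space and the compatibility of the duality bracket with the $L^2(\Omega)$ inner product on the dense subspace spanned by the $\vphi_k$. Once that bookkeeping is in place, the rest is the elementary inequality $\lambda_k\le\lambda_k^{\frac12}(1+\lambda_k^{\frac12})$ and one application of Cauchy--Schwarz, which is presumably why the authors leave the statement to the reader.
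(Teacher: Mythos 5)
Your proof is correct. The paper explicitly leaves the proof of this lemma to the reader, so there is no argument to compare against, but the route you take is the natural one given the spectral framework the authors set up: identify $H^{-\frac12}(\Omega)$ with the weighted coefficient space dual to $\{c_k^2(1+\lambda_k^{\frac12})\}\in\ell^1$, observe that the spectral coefficients $e_k=-c_k\lambda_k^{\frac12}$ of $I(u)$ satisfy $\sum_k e_k^2(1+\lambda_k^{\frac12})^{-1}\le\|u\|_{\dot H^{\frac12}(\Omega)}^2$ thanks to $\lambda_k\le\lambda_k^{\frac12}(1+\lambda_k^{\frac12})$, then verify the pairing formula on finite spectral sums where $I(u)\in L^2(\Omega)$ and the duality is the $L^2$ inner product, and close by density plus Cauchy--Schwarz continuity of both sides. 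One small wording nit: you invoke ``Riesz representation'' for the identification of $H^{-\frac12}(\Omega)$ with the weighted sequence space, but the pairing you then use is the $L^2$/Gelfand-triple pairing on coefficients, not the $H^{\frac12}$ inner-product pairing that Riesz strictly produces; the two identifications are of course equivalent (related by $e_k=(1+\lambda_k^{\frac12})w_k$) and your computation is consistent throughout, so this is a matter of phrasing only.
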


\paragraph{Important equalities.} The semi-norms
$||\cdot||_{\dot{H}^{\frac12}(\Omega)}$, $||\cdot||_{\dot{H}^{1}(\Omega)}$, $||\cdot
||_{\dot{H}^{\frac32}(\Omega)}$ and $||\cdot
||_{\dot{H}^{2}_N(\Omega)}$ are related to the operator $I$ by
equalities which will be used repeatedly.
\begin{proposition}[The operator $I$ and several semi-norms]\label{prop:semi-norms}
\item For all $u\in H^{\frac12}(\Omega)$, we have 
$$
\frac12 \int_\Omega \int_\Omega (u(x)-u(y))^2 \nu (x,y) dx dy
 = ||u||^2_{\dot{H}^{\frac12}(\Omega)}.
$$
\item For all $u \in H^1 (\Omega)$, we have
$$
\int_\Omega (I(u))^2 dx = \|u\|^2_{\dot{H}^1 (\Omega)}.
$$
\item For all $u\in H^2_N(\Omega)$, we have
$$ 
- \int_\Omega I(u)_x u_x\, dx = ||u||_{\dot{H}^{\frac32}_N(\Omega)}^2. 
$$
\item For all $u\in H^2_N(\Omega)$, we have
$$
 \int_\Omega  (\pa_xI (u))^2 \, dx  = \|u\|_{\dot{H}^2_N (\Omega)}^2.
$$
\end{proposition}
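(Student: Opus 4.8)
The plan is to dispatch the four identities separately, treating the last three by the spectral decomposition and the first one by the integral representation of Proposition~\ref{prop:integral-rep} together with the Lemma above.

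For the second, third and fourth identities the only non-trivial input, beyond the definition \eqref{eq:Idef} of $I$ and the orthonormality of the $\vphi_k$ in $L^2(\Omega)$, is the elementary relation $\int_\Omega \vphi_j'\vphi_k'\,dx=\lambda_k\,\delta_{jk}$ (one integration by parts, the boundary term killed by the Neumann condition $\vphi_k'|_{\pa\Omega}=0$ in \eqref{eq:eigen}; for $\Omega=(0,1)$ one may simply use $\vphi_k=\sqrt2\cos(k\pi x)$). Writing $u=\sum_{k\ge 0}c_k\vphi_k$, the regularity assumed on $u$ ($u\in H^1$ in the second case, $u\in H^2_N$ in the third and fourth) guarantees that the series for $u_x$, for $I(u)=-\sum c_k\lambda_k^{1/2}\vphi_k$ and for $I(u)_x=-\sum c_k\lambda_k^{1/2}\vphi_k'$ all converge in $L^2(\Omega)$, so the inner products may be computed term by term. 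One then reads off $\int_\Omega I(u)^2\,dx=\sum_k c_k^2\lambda_k=\int_\Omega u_x^2\,dx=\|u\|^2_{\dot H^1(\Omega)}$; $-\int_\Omega I(u)_x u_x\,dx=\sum_k c_k^2\lambda_k^{3/2}=\|u\|^2_{\dot H^{3/2}_N(\Omega)}$; and $\int_\Omega (I(u)_x)^2\,dx=\sum_k c_k^2\lambda_k^2=\|u\|^2_{\dot H^2_N(\Omega)}$. Alternatively, the fourth identity follows from the second applied to $I(u)\in H^1(\Omega)$ combined with $I\circ I(u)=-\Delta u$ from Proposition~\ref{prop:dir-to-neum}.

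For the first identity I would argue by density: since $H^{1/2}_N(\Omega)=\ha$ and $C^\infty(\overline\Omega)$ is dense in $\ha$, it suffices to prove the equality for smooth $u$, to which Proposition~\ref{prop:integral-rep} applies. For such $u$ the Lemma gives $\|u\|^2_{\dot H^{1/2}(\Omega)}=-\langle I(u),u\rangle=-\int_\Omega I(u)(x)\,u(x)\,dx$; inserting $I(u)(x)=\int_\Omega (u(y)-u(x))\,\nu(x,y)\,dy$ and using the symmetry $\nu(x,y)=\nu(y,x)$ evident from \eqref{defi:nu}, one symmetrizes the double integral under $x\leftrightarrow y$ and averages to obtain $\tfrac12\int_\Omega\int_\Omega (u(x)-u(y))^2\nu(x,y)\,dx\,dy$. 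The double integral converges absolutely for smooth $u$ because $(u(x)-u(y))^2$ vanishes quadratically on the diagonal while $\nu$ blows up only like $|x-y|^{-2}$ there; the intermediate iterated integral defining $I(u)(x)$ is to be understood as a principal value, which exists for the same reason (the odd leading part $u'(x)/(\pi(y-x))$ integrates to zero).

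The genuinely delicate step is the density passage in the first identity: one must ensure that both sides are continuous in a topology for which smooth functions are dense. The right-hand side is by definition continuous for the $\dot H^{1/2}$ semi-norm; for the left-hand side the cleanest route is to note that $u\mapsto \tfrac12\int_\Omega\int_\Omega (u(x)-u(y))^2\nu\,dx\,dy$ is a nonnegative quadratic form whose polarization agrees, on the dense subspace of smooth functions, with the bounded bilinear form $(u,v)\mapsto -\langle I(u),v\rangle$ on $\ha\times\ha$, whence the two quadratic forms coincide on all of $\ha$. Everything else is elementary and needs no estimate beyond orthonormality of the eigenfunctions.
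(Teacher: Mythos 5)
Your proof is correct and follows essentially the same route as the paper: a spectral computation using the orthogonality $\int_\Omega \vphi_j'\vphi_k'\,dx=\lambda_k\delta_{jk}$ for the second, third, and fourth identities, and the integral representation of $I$ together with the symmetry $\nu(x,y)=\nu(y,x)$ for the first. The one place you go beyond the paper is worth flagging: the paper simply states that the first identity is ``easily derived'' from the integral representation and symmetry of $\nu$, but Proposition~\ref{prop:integral-rep} is stated only for smooth $u$, so the extension to all of $H^{\frac12}(\Omega)$ does require the density step you supply. Your argument via polarization of the two quadratic forms is a clean way to do this; alternatively one can note that for smooth $u_k\to u$ in $\ha$, the Gagliardo seminorm $Q_1(u_k)^{1/2}$ is Cauchy (since $Q_1^{1/2}$ satisfies the triangle inequality and equals $\|\cdot\|_{\dot H^{1/2}}$ on smooth functions) and then use Fatou's lemma along a pointwise a.e.\ convergent subsequence to identify its limit with $Q_1(u)$. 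Your observation that the fourth identity also follows from the second applied to $I(u)$ together with $I\circ I=-\Delta$ is a nice alternative that the paper does not give.
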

\begin{rem}
Note that $I(u)_x\neq I(u_x)$.
\end{rem}
\begin{proof}
The two first equalities are easily derived form the definition of $I$,
definitions of the semi-norms, the integral representation of $I$ and 
the fact that $\nu(x,y)=\nu (y,x)$. 

In order to prove the third and fourth equalities, we first
remark that $\pa_x\vphi_k=-\lambda_k^{\frac12} \sin(k \pi x)$ form an
orthogonal basis of $L^2(\Omega)$.

In order to prove the fourth equality, we first write 
$$
\partial_x (I(u)) = -\sum_{k=1}^\infty c_k \lambda_k^{\frac12}
\partial_x \vphi_k \quad \text{ in } L^2 (\Omega)
$$ 
from which we deduce
\begin{eqnarray*} 
  \int_\Omega  (I (u)_x)^2 \, dx &  = & \sum_{k=1}^\infty c_k^2 \lambda_k \int_\Omega (\pa_x \vphi_k)^2\, dx \\
  &=& \sum_{k=1}^\infty c_k^2 \lambda_k \int_\Omega  \vphi_k (- \pa_{xx}\vphi_k)\, dx \\
  &=& \sum_{k=0}^\infty c_k^2 \lambda_k^2 =||u||_{\dot{H}^2_N}^2.
\end{eqnarray*}

As far as the third equality is concerned, we note that
$$ 
u_x = \sum_{k=0}^{\infty} c_k  \pa_x \vphi_k \quad \text{ in } L^2 (\Omega).
$$
We then have
\begin{eqnarray*}
-\int_\Omega I(u)_x u_x\, dx & = &  
\sum_{k=0}^{\infty} c_k^2 \lambda_k ^{\frac12}\int_\Omega (\pa_x \vphi_k)^2\, dx \\
& = & 
-\sum_{k=0}^{\infty} c_k^2 \lambda_k^{\frac12} \int_\Omega \vphi_k \pa_{xx} \vphi_k\, dx \\
& = & 
\sum_{k=0}^{\infty} c_k^2 \lambda_k^{\frac12} \int_\Omega \lambda_k \vphi_k ^2\, dx \\
& = & 
\sum_{k=0}^{\infty} c_k^2  \lambda_k^{\frac32}  = ||u||_{\dot{H}^{\frac32}(\Omega)}^2.
\end{eqnarray*}
\end{proof}

\subsection{The problem $-I(u)=g$}
We conclude this preliminary section by giving a few results about the following problem:
\begin{equation}\label{eq:Iinv0}
\begin{array}{cc}
\mbox{For a given $g\in L^2(\Omega)$, find $u\in H^1(\Omega)$ such that }\\[3pt]
-I(u) = g .
\end{array}
\end{equation}
Note that $\int_\Omega I(u)\, dx = 0 $ for all $u\in H^1(\Omega)$ (since $\int_\Omega \vphi_k\, dx=0$ for all $k\geq 1$) and
so a necessary condition for the existence of a solution to
(\ref{eq:Iinv0}) is
$$
\int_\Omega g(x)\, dx = 0.
$$
Note also that there is no uniqueness since if $u$ is a solution then
$u+C$ is also a solution for any constant $C$.  
We may however expect a unique solution if we add the further constraint $\int u\, dx=0$.
Indeed, a weak solution $u\in
H^{\frac{1}{2}}(\Omega)$ for $g\in H^{-\frac 1 2 }(\Omega)$ can be
found using Lax-Milgram theorem in $\{ u\in \ha\, ;\, \int_\Omega u\,
dx=0\} $ equipped with the norm $||u||_{\had}$.  Alternatively, we can
use the spectral framework: For $g\in L^2(\Omega)$ such that $\int_\Omega g(x)\, dx=0$, we have
$$ 
g=\sum_{k=1}^\infty d_k \vphi_k \quad \mbox{ with } \sum_{k=1}^\infty d_k^2 <\infty.
$$
We can then write:
\begin{equation}\label{eq:uuu}
u=I^{-1}(g):=\sum_{k=1}^\infty \frac{d_k}{\lambda_k^{\frac12}} \vphi_k
\end{equation}
which clearly lies in $H^1(\Omega)$ and satisfies $\int_\Omega u \,
dx= 0$.  The fact that the $\vphi_k$'s form an orthogonal basis of
$L^2 (\Omega)$ implies that there is only one solution of
\eqref{eq:Iinv0} such that $\int_\Omega u \,dx =0$.  Finally it is
clear from (\ref{eq:uuu}) that further regularity on $g$ will imply
further regularity on $u$.  We sum up this discussion in the following
statement
\begin{theorem}\label{thm:basic-existence}
  For all $g \in L^2 (\Omega)$ such that $\int_\Omega g \, dx =0$, there
  exists a unique function $u \in H^1 (\Omega)$ such that $-I(u)=g$ in
  $L^2 (\Omega)$ and $\int_\Omega u \,dx = 0$.  Furthermore, if $g$ is
  in $H^1(\Omega)$, then $u\in H^2_N(\Omega)$.
\end{theorem}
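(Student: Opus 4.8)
\emph{Proof proposal.} The plan is to solve the problem explicitly in the spectral basis, exactly along the lines sketched just above the statement. Write $g=\sum_{k=0}^\infty d_k\vphi_k$ in $L^2(\Omega)$; since $\vphi_0\equiv 1$ and $\int_\Omega \vphi_k\,dx=0$ for $k\geq 1$, the hypothesis $\int_\Omega g\,dx=0$ is equivalent to $d_0=0$, so $g=\sum_{k\geq 1}d_k\vphi_k$ with $\sum_{k\geq 1}d_k^2=\|g\|_{L^2(\Omega)}^2<\infty$. I would then set $u:=\sum_{k\geq 1}\lambda_k^{-1/2}d_k\vphi_k$, i.e. take $c_k:=\lambda_k^{-1/2}d_k$ for $k\geq 1$ and $c_0:=0$. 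Using $\lambda_k\geq 1$ for $k\geq 1$ one gets $\sum_k c_k^2(1+\lambda_k)=\sum_{k\geq 1}d_k^2(\lambda_k^{-1}+1)\leq 2\|g\|_{L^2(\Omega)}^2<\infty$, so the series converges in $H^1_N(\Omega)$, which coincides with $H^1(\Omega)$. By the definition \eqref{eq:Idef} of $I$ we have $I(u)=-\sum_{k\geq1}c_k\lambda_k^{1/2}\vphi_k=-\sum_{k\geq1}d_k\vphi_k=-g$, hence $-I(u)=g$ in $L^2(\Omega)$, and $\int_\Omega u\,dx=c_0=0$.

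For uniqueness I would argue as follows: if $u_1,u_2\in H^1(\Omega)$ both solve the problem, then $w:=u_1-u_2=\sum_k e_k\vphi_k$ satisfies $I(w)=0$, i.e. $\sum_{k\geq1}\lambda_k^{1/2}e_k\vphi_k=0$; since the $\vphi_k$ form an orthogonal basis of $L^2(\Omega)$ and $\lambda_k>0$ for $k\geq1$, this forces $e_k=0$ for every $k\geq1$, and then $\int_\Omega w\,dx=e_0=0$ gives $w=0$. For the regularity statement, if in addition $g\in H^1(\Omega)=H^1_N(\Omega)$, then $\sum_{k\geq1}d_k^2\lambda_k<\infty$, and therefore $\sum_k c_k^2(1+\lambda_k^2)=\sum_{k\geq1}d_k^2(\lambda_k^{-1}+\lambda_k)\leq\sum_{k\geq1}d_k^2(1+\lambda_k)=\|g\|_{H^1_N(\Omega)}^2<\infty$, which is exactly the statement $u\in H^2_N(\Omega)$.

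There is no genuine difficulty here; the only two points that deserve a word of care are the identification $H^1(\Omega)=H^1_N(\Omega)$ (with equivalent norms, so that the $H^1$-regularity of $g$ is correctly encoded in its Neumann-eigenbasis coefficients) and the repeated, elementary use of $\lambda_k\geq 1$ for $k\geq1$ to control the weighted $\ell^2$ sums. I note that one could alternatively obtain a weak solution for $g\in H^{-\frac12}(\Omega)$ by applying Lax--Milgram on $\{u\in\ha\,;\,\int_\Omega u\,dx=0\}$ equipped with the norm $\|u\|_{\had}$ and then bootstrapping the regularity, but the spectral construction above is shorter and yields the $L^2$ and $H^2_N$ conclusions directly.
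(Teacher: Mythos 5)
Your proof is correct and is essentially the same as the paper's: the paragraph immediately preceding the theorem carries out exactly this spectral construction, defining $u=I^{-1}(g):=\sum_{k\geq 1}\lambda_k^{-1/2}d_k\vphi_k$, noting membership in $H^1(\Omega)$ and the zero-mean condition, invoking orthogonality for uniqueness, and remarking that extra regularity of $g$ bootstraps to $u$. You have merely filled in the (elementary) norm estimates and made the $H^2_N$ conclusion explicit, which the paper leaves as ``clear.''
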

We will also use the following corollary of the previous theorem
\begin{corollary}\label{cor:aux}
For all $g\in L^2(\Omega)$, there exists a unique solution  $u\in H^1(\Omega)$ of
$$ -I(v) + \int_\Omega v\, dx = g.$$
\end{corollary}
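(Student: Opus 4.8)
The plan is to reduce Corollary \ref{cor:aux} to Theorem \ref{thm:basic-existence} by a simple decomposition trick. Given $g\in L^2(\Omega)$, write $g = \bar g + g_0$ where $\bar g = \int_\Omega g\, dx$ is the mean of $g$ and $g_0 = g - \bar g$ has zero mean. The idea is to look for $v$ in the form $v = w + c$, where $w\in H^1(\Omega)$ has zero mean and $c\in\R$ is a constant to be determined. Since $I$ annihilates constants (because $\vphi_0$ is constant and $\lambda_0=0$), we have $I(v) = I(w)$, so the equation $-I(v) + \int_\Omega v\, dx = g$ becomes $-I(w) + c = \bar g + g_0$ (using $\int_\Omega v\, dx = c$ since $w$ has zero mean and $\Omega=(0,1)$ has measure one).

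Next I would match the two pieces according to their mean. Taking the mean of both sides, and using $\int_\Omega I(w)\, dx = 0$, forces $c = \bar g$. What remains is $-I(w) = g_0$, which is exactly problem \eqref{eq:Iinv0} with right-hand side $g_0\in L^2(\Omega)$ satisfying $\int_\Omega g_0\, dx = 0$. By Theorem \ref{thm:basic-existence}, this has a unique solution $w\in H^1(\Omega)$ with $\int_\Omega w\, dx = 0$. Setting $v = w + \bar g$ then gives a solution of the equation in the corollary, and $v\in H^1(\Omega)$ since $w\in H^1(\Omega)$ and constants are in $H^1(\Omega)$.

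For uniqueness, suppose $v_1, v_2\in H^1(\Omega)$ both solve the equation. Then $v := v_1 - v_2$ satisfies $-I(v) + \int_\Omega v\, dx = 0$. Taking the mean and using $\int_\Omega I(v)\, dx = 0$ gives $\int_\Omega v\, dx = 0$, whence $I(v) = 0$. From the spectral definition \eqref{eq:Idef}, $I(v) = -\sum_{k\geq 1} c_k\lambda_k^{1/2}\vphi_k = 0$ forces $c_k = 0$ for all $k\geq 1$ (since $\lambda_k > 0$ there), so $v = c_0\vphi_0$ is constant; combined with $\int_\Omega v\, dx = 0$ this gives $v = 0$, i.e. $v_1 = v_2$.

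There is no real obstacle here: the only point requiring a moment's care is the bookkeeping of means, namely that $\Omega = (0,1)$ has Lebesgue measure $1$ so that $\int_\Omega v\, dx$ is exactly the mean and the constant $c$ equals $\int_\Omega v\, dx$ directly; and the repeated use of the two structural facts that $I$ kills constants and that $I$ maps into mean-zero functions. Everything else is a direct invocation of Theorem \ref{thm:basic-existence}.
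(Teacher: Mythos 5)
Your proof is correct and follows essentially the same route as the paper: subtract off the mean to reduce to the mean-zero problem solved by Theorem~\ref{thm:basic-existence}, and add the mean back as a constant. The only cosmetic difference is in the uniqueness step, where you re-derive injectivity on mean-zero functions from the spectral definition~\eqref{eq:Idef}, while the paper simply invokes the uniqueness clause of Theorem~\ref{thm:basic-existence}; both are fine.
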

\begin{proof}
  Set $m=\int_\Omega g(x)\, dx$ and consider $\tilde{g} = g-m$. Then
  $\tilde{g} \in L^2 (\Omega)$ and $\int_\Omega \tilde{g} dx =
  0$. There is a (unique) $u\in H^1(\Omega)$ such that
$$
-I(u) = g-m , \quad \int_\Omega u(x)\, dx=0.
$$
We then set $v=u+m$.  Then $\int_\Omega v\, dx = m$ and
$$
-I(v)=-I(u)=g-m=g-\int_\Omega v\, dx.
$$
As far as uniqueness is concerned, if we consider two solutions $v_1$ and $v_2$ 
then we have
$$
\int_\Omega v_1 dx = \int_\Omega v_2 dx = \int_\Omega g 
$$
and this implies that $w =v_1 -v_2$ satisfies $-I(w)=0$. The
uniqueness of the solution given by Theorem~\ref{thm:basic-existence}
implies that $w=0$ and the proof is complete.
\end{proof}

\section{A regularized problem}\label{sec:regularized}
We now turn to the proof of Theorem~\ref{thm:main}.  The degeneracy of
the diffusion coefficient is a major obstacle to the development of a
variational argument.  As in \cite{bf90}, the existence of solution
for \eqref{eq:main} is thus obtained via a regularization approach:
Given $\eps>0$, we consider
\begin{equation}\label{eq:regularized}
  \partial_t u +  \partial_x ( f_\eps(u) \partial_x I (u) )  = 0 ,  
  \quad t \in (0,T), x \in \Omega 
\end{equation}
where
$$f_{\eps}(s)={s_+}^n+\eps$$
(with $s_+=\max(s,0)$), with the initial condition
\begin{equation}\label{ic:regularized}
u(0,x)=u_0(x)
\end{equation}
and boundary conditions
$$ u_x=0\, , \quad f_\eps(u)\pa_x(I(u))=0 \quad \mbox{ on } \pa\Omega.$$

The first step in the proof of Theorem~\ref{thm:main}, is to prove the
following proposition:
\begin{proposition}[Existence of solution for the regularized problem]\label{prop:eps}
For all $u_0\in H^{\frac12}(\Omega)$ and for all $T>0$, there exists a unique function $u^\eps$ such that
$$
u^\eps\in L^{\infty}(0,T; \ha )\cap L^2(0,T;H^2_N(\Omega))
$$ 
solution of 
\begin{equation}\label{eq:eps}
   \iint_Q u^\eps \partial_t \vphi \, dx \, dt + \iint_Q f_\eps(u^\eps)
   \pa_xI (u^\eps)\pa_x \vphi \, dx\, dt
=- \int_\Omega u_0 \vphi(0,\cdot)\, dx
\end{equation}
 for all $\vphi\in \mathcal{C}^1_c ([0,T),H^1(\Omega))$ with $Q =\Omega \times (0,T)$.

Moreover,  the function $u^\eps$ satisfies
\begin{equation}\label{estim:conservation mass eps}
  \intom u^\eps (x,t)\, dx = \intom u_0 (x)\, dx\quad \text{ a.e. } t \in (0,T)
\end{equation}
and 
\begin{equation} \label{estim:nrj eps} \|u^\eps (t,\cdot)\|_{\had}^2 +
  2 \int_0^t\int_\Omega f_\eps (u^\eps) (\partial_x I (u^\eps))^2 \, dx\, ds\leq
  \| u_0 \|_{\had}^2 \; \text{ a.e. } t \in (0,T).
\end{equation}

Finally, if  $G_\eps$ is a non-negative function such
that $G_\eps''(s)=\frac{1}{f_\eps(s)}$, then $u^\eps$ satisfies
for almost every $t \in (0,T)$
\begin{equation}\label{estim:entropy eps}
  \int_\Omega G_\eps(u^\eps)(x,t)\, dx 
  + \int_0^t \|\ u^\eps(s) \|_{\dot{H}_N^{\frac32}(\Omega)}^2\, ds
  \leq \int_\Omega  G_\eps(u_0)\, dx.
\end{equation}
\end{proposition}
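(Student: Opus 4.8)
The approach is the classical Galerkin scheme combined with the two \emph{a priori} bounds (energy and entropy) that are already available formally. First I would discretize in space using the eigenbasis $\{\vphi_k\}$: let $V_m = \mathrm{span}\{\vphi_0,\dots,\vphi_m\}$ and look for $u_m(t,\cdot)=\sum_{k=0}^m c_k(t)\vphi_k$ solving the finite-dimensional ODE system obtained by testing \eqref{eq:regularized} against each $\vphi_j$, $j\le m$, with initial data the projection of $u_0$ onto $V_m$. Because $f_\eps(s)=s_+^n+\eps \ge \eps>0$ is bounded below and Lipschitz on bounded sets, and $I$ acts diagonally on $V_m$ (so $\pa_x I(u_m)$ and $\pa_x\vphi_j$ are smooth), the nonlinearity $(c_0,\dots,c_m)\mapsto \big(\intom f_\eps(u_m)\pa_x I(u_m)\pa_x\vphi_j\,dx\big)_j$ is locally Lipschitz; Cauchy–Lipschitz gives a unique local solution.

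Next I would derive the two uniform estimates at the Galerkin level. Testing with $\vphi_j$ and summing against $c_j(t)$ — i.e. testing with $-I(u_m)$, which is legitimate since $-I(u_m)\in V_m$ — reproduces the energy identity: $\frac{d}{dt}\tfrac12\|u_m\|_{\had}^2 + \intom f_\eps(u_m)(\pa_x I(u_m))^2\,dx = 0$, using $-\langle I(u)_x, u_x\rangle = -\langle I(u), I(u)\rangle$-type manipulations and the relation $I\circ I=-\Delta$ from Proposition \ref{prop:dir-to-neum}. This yields \eqref{estim:nrj eps} and in particular a uniform $L^\infty(0,T;\had)$ bound, so the local solution is global. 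Testing instead with $G_\eps'(u_m)$ — here one must be slightly careful, since $G_\eps'(u_m)\notin V_m$; the clean route is to test with the $V_m$-projection and pass to the limit, or to note that the entropy computation $\frac{d}{dt}\intom G_\eps(u_m)\,dx = \intom G_\eps'(u_m)\pa_x(f_\eps(u_m)\pa_x I(u_m))\,dx = -\intom G_\eps''(u_m) f_\eps(u_m) \pa_x u_m\, \pa_x I(u_m)\,dx = -\intom \pa_x u_m\,\pa_x I(u_m)\,dx = \|u_m\|_{\dot H^{3/2}_N}^2$ only uses $G_\eps'' f_\eps \equiv 1$ and the third equality of Proposition \ref{prop:semi-norms} — giving \eqref{estim:entropy eps} and a uniform $L^2(0,T;\dot H^{3/2}_N(\Omega))$ bound. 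The mass conservation \eqref{estim:conservation mass eps} is immediate from testing with $\vphi_0=1$, since $\pa_x\vphi_0=0$.

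Armed with the uniform bounds $u_m$ bounded in $L^\infty(0,T;\had)\cap L^2(0,T;H^{3/2}_N)$ — hence in $L^2(0,T;H^2_N)$ after interpolation only if one has more, so more precisely I would keep $L^2(0,T;H^{3/2}_N)$ and upgrade later, or observe that $\|u_m\|_{H^2_N}$ follows from controlling $\pa_x I(u_m)$ in $L^2$ via the energy dissipation together with $f_\eps\ge\eps$: $\eps\intom(\pa_x I(u_m))^2 \le \intom f_\eps(u_m)(\pa_x I(u_m))^2$, and $\intom(\pa_x I(u_m))^2=\|u_m\|_{\dot H^2_N}^2$ by the fourth equality of Proposition \ref{prop:semi-norms} — I would extract a weakly-* convergent subsequence and, using an Aubin–Lions argument (the time derivative $\pa_t u_m$ is bounded in, say, $L^2(0,T;(H^2_N)')$ from the equation), get strong convergence in $L^2(Q)$, hence a.e. convergence of $u_m$. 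This lets me pass to the limit in the weak formulation \eqref{eq:eps}: the linear terms are routine, and for the nonlinear term $f_\eps(u_m)\pa_x I(u_m)$ one combines a.e. convergence of $f_\eps(u_m)$ (continuity and boundedness — note $f_\eps(u_m)$ is bounded in $L^\infty_{t,x}$? not obviously, but it is bounded in $L^\infty(0,T;L^p)$ for suitable $p$ via the $\had$-bound and Sobolev embedding in dimension $1$) with weak $L^2$ convergence of $\pa_x I(u_m)$. Lower semicontinuity of norms then transfers the energy and entropy inequalities to the limit. Uniqueness I would prove separately by an energy estimate on the difference of two solutions, testing with $I^{-1}$ of the difference and using the monotonicity/Lipschitz structure of $f_\eps$ together with the $\had$-control — this is where the regularization $\eps>0$ is essential.

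The main obstacle I anticipate is \textbf{not} the existence side but rather making the nonlinear limit and the uniqueness rigorous with the weak regularity at hand: specifically, justifying that $f_\eps(u_m)\pa_x I(u_m) \rightharpoonup f_\eps(u)\pa_x I(u)$ in, say, $L^1(Q)$ requires enough integrability on $f_\eps(u_m)$ beyond the bare $L^\infty(0,T;\had)$ estimate — one uses that $H^{1/2}(\Omega)\hookrightarrow L^p(\Omega)$ for every $p<\infty$ in dimension one, so $u_m^n \in L^\infty(0,T;L^{p/n})$, which suffices to pair against the $L^2(Q)$-bounded factor $\pa_x I(u_m)$ after choosing $p$ large. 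The entropy estimate's passage to the limit is also delicate because $G_\eps$ may be large (it is designed so that $G_\eps(0)$ can blow up as $\eps\to0$), but for \emph{fixed} $\eps$ it is a fixed continuous function and Fatou plus a.e. convergence handle it. A secondary technical point is the boundary condition $f_\eps(u)\pa_x I(u)=0$ on $\pa\Omega$: this is encoded automatically in the weak formulation against $\vphi\in\mathcal C^1_c([0,T),H^1(\Omega))$ since no boundary term appears, so it need not be checked separately beyond noting $u^\eps\in L^2(0,T;H^2_N)$ gives $\pa_x u^\eps=0$ on $\pa\Omega$.
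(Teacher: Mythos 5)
Your plan is a Galerkin (spectral, space-discretization) scheme, whereas the paper discretizes in \emph{time} (an implicit Euler / Rothe scheme): it constructs piecewise-constant-in-time approximations by solving, at each step, a stationary elliptic problem of the form $u + \tau\,\pa_x(f_\eps(u)\pa_x I(u)) = g$. That stationary problem is in turn solved by a pseudo-monotonicity argument after the change of test function $\vphi = -I(v) + \int_\Omega v\,dx$, which converts the scheme into a coercive problem on $H^2_N(\Omega)$. This is a genuinely different route from yours, and the difference is not cosmetic: it is precisely what makes the entropy estimate rigorous.

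The gap in your proposal is the one you flagged but did not resolve: at the Galerkin level the entropy test function $G_\eps'(u_m)$ does not lie in $V_m$, and projecting it onto $V_m$ destroys the algebraic identity $G_\eps''(u_m)f_\eps(u_m)\equiv 1$ that makes the dissipation term collapse to $\|u_m\|^2_{\dot H^{3/2}_N}$. After projection the integrand is $f_\eps(u_m)\,\pa_x I(u_m)\,\pa_x(P_m G_\eps'(u_m))$, and there is no cancellation; nor can you run the entropy computation directly on the limit $u^\eps$, because the formal integration by parts would require more regularity on $u^\eps$ than the energy estimate delivers — that would be circular. This is exactly the classical obstruction in the thin-film literature that motivates time-discrete or structure-preserving schemes. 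In the paper's approach the issue disappears: at each time step $u$ is a fixed element of $H^2_N(\Omega)$ solving an elliptic problem, $G_\eps'(u)\in H^1(\Omega)$ (since $G_\eps'$ is Lipschitz), so it is an admissible test function (realized as $-I(v)+\int_\Omega v\,dx$ for a suitable $v\in H^2_N$), the convexity inequality $G_\eps'(u)(g-u)\le G_\eps(g)-G_\eps(u)$ gives the discrete entropy inequality, and one then sums over time steps. So either you must adopt a time-discrete step (in which case you have essentially reproduced the paper's scheme), or you must supply a genuinely new argument for why the projected entropy test is controllable — simply "pass to the limit" will not do, because the error term $\pa_x(G_\eps'(u_m)-P_m G_\eps'(u_m))$ is not small in the norm in which it is paired.

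Two minor remarks. Your route to $L^2(0,T;H^2_N)$ via $\eps\int(\pa_x I(u_m))^2\le \int f_\eps(u_m)(\pa_x I(u_m))^2$ and the fourth identity of Proposition~\ref{prop:semi-norms} is correct and is also implicitly what the paper uses. And you assert uniqueness and sketch an $I^{-1}$-duality argument; be aware that $f_\eps$ is not monotone and $u\mapsto f_\eps(u)\pa_x I(u)$ is genuinely nonlinear, so a Gronwall-type uniqueness proof requires controlling terms like $\int (f_\eps(u_1)-f_\eps(u_2))\pa_x I(u_1)\,\pa_x I^{-1}(w)\,dx$; this needs pointwise bounds on $u_1,u_2$ and on $\pa_x I(u_1)$ beyond what the a priori estimates give directly, so it is not as routine as your outline suggests.
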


\begin{rem}
  Note that this result does not require condition
  (\ref{eq:initpositive}) to be satisfied and is thus valid with
  compactly supported initial data.  However, we will need condition
  (\ref{eq:initpositive}) to get enough compactness on $u^\eps$ to
  pass to the limit $\eps\to 0$ and complete the proof of Theorem
  \ref{thm:main}.
\end{rem}

There are several possible approaches to prove Proposition
\ref{prop:eps}.  In the next sections, we present a proof based on a
time discretization of (\ref{eq:eps}) and fairly classical
monotonicity method (though the operator here is not monotone, but
only pseudo-monotone).

\subsection{Stationary problem}

In order to prove Proposition \ref{prop:eps}, we first consider the following stationary
problem (for $\tau>0$):
\begin{equation}\label{eq:stat}  
\begin{array}{l}
\mbox{For a given $g \in H^{\frac12}(\Omega)$, find $u \in H^2_N(\Omega)$ such that}\\[5pt]
\left\{\begin{array}{lll}
 u+  \tau \partial_x (  f_{\eps}(u) \partial_x I( u) ) &=& g\quad  \mbox{ in }\Omega\\[5pt]
\pa_x u=0 \text{ and } \pa_xI(u)&=&0\quad  \text{ on } \pa\Omega.
 \end{array}\right. 
\end{array}
\end{equation}
Once we prove the existence of a solution for (\ref{eq:stat}), a
simple time discretization method will provide the existence of a
solution to (\ref{eq:eps}).  We are going to prove:
\begin{proposition}[The stationary problem] \label{prop:stationary}
  For all $g \in \ha$, there exists $u \in H^2_N(\Omega)$ such that
  for all $\vphi \in H^1(\Omega)$,
\begin{equation}\label{eq:stationary}
  \frac1\tau \intom (u -g) \vphi\, dx - \intom f_\eps (u) \, \partial_x I(u) \,\pa_x\vphi \, dx= 0 \, .
\end{equation}
Furthermore,
\begin{eqnarray}
\label{estim:conservation mass stationary}
\intom u (x)\, dx &=& \intom g(x)\, dx \, , \\
\label{estim:stationary}
  \|u\|^2_{\had} + 2\tau \int_\Omega f_\eps (u) (\partial_x Iu)^2 \, dx &\le&
  \|g\|^2_{\had} \, ,
\end{eqnarray}
and if $   \int_\Omega G_\eps(g) \, dx<\infty$ then
\begin{eqnarray}
\label{eq:Faux} 
  \intom G_\eps(u) \, dx+ \tau \|u\|^2_{\dot{H}_N^{\frac32}(\Omega)} 
  &\le& \int_\Omega G_\eps(g)\, dx.
\end{eqnarray}
\end{proposition}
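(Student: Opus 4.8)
The plan is to solve the stationary problem \eqref{eq:stat} by a Galerkin approximation in the basis $\{\vphi_k\}$ of eigenfunctions, combined with a priori estimates coming from the three quantities we want to control (mass, $\dot H^{1/2}$-energy, entropy). Fix $g\in\ha$ and write $g=\sum_{k\ge 0}g_k\vphi_k$. For each $N$, look for $u_N=\sum_{k=0}^N a_k\vphi_k$ in the finite-dimensional space $V_N=\mathrm{span}(\vphi_0,\dots,\vphi_N)$ satisfying \eqref{eq:stationary} tested against all $\vphi\in V_N$. Because $I(\vphi_k)=-\lambda_k^{1/2}\vphi_k$, the term $\partial_x I(u_N)$ is an explicit linear combination of the $\partial_x\vphi_k$, and $f_\eps(u_N)=\big((u_N)_+\big)^n+\eps$ is a continuous function of the coefficients; the map $F_N:\R^{N+1}\to\R^{N+1}$ sending $(a_k)$ to the residual of \eqref{eq:stationary} is therefore continuous. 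The key point is that $F_N$ satisfies a coercivity/sign condition: testing \eqref{eq:stationary} with $\vphi=u_N$ gives $\frac1\tau\|u_N\|_{L^2}^2-\frac1\tau\int g\,u_N+\tau\int f_\eps(u_N)(\partial_x I(u_N))^2=0$, and since $f_\eps\ge\eps>0$ the dissipation term is nonnegative, so $\|u_N\|_{L^2}^2\le\int g\,u_N\le\|g\|_{L^2}\|u_N\|_{L^2}$, giving $F_N(a)\cdot a\ge0$ on a sufficiently large sphere. A standard Brouwer fixed point / degree argument then yields a solution $u_N\in V_N$.

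The next step is to derive the three a priori estimates uniformly in $N$, which will both give compactness and, in the limit, the asserted inequalities \eqref{estim:conservation mass stationary}--\eqref{eq:Faux}. (i) Mass: test \eqref{eq:stationary} with $\vphi=\vphi_0=1$; since $\partial_x\vphi_0=0$ the dissipation term drops and $\int u_N=\int g$. (ii) $\dot H^{1/2}$-energy: test with $\vphi=-I(u_N)$ (this is legitimate at the Galerkin level because $I$ maps $V_N$ into itself). Using the lemma that $-\langle I(u),u\rangle=\|u\|_{\dot H^{1/2}}^2$, the first term produces $\frac1\tau(\|u_N\|_{\dot H^{1/2}}^2-\langle g,-I(u_N)\rangle)$, and the second term produces $+\tau\int f_\eps(u_N)(\partial_x I(u_N))^2$; Cauchy--Schwarz on the cross term $\langle g,-I(u_N)\rangle\le\|g\|_{\dot H^{1/2}}\|u_N\|_{\dot H^{1/2}}$ then yields \eqref{estim:stationary}. (iii) Entropy: test with $\vphi=G_\eps'(u_N)$, which lies in $H^1(\Omega)$ with $\partial_x(G_\eps'(u_N))=G_\eps''(u_N)\partial_x u_N=\frac{1}{f_\eps(u_N)}\partial_x u_N$; convexity of $G_\eps$ gives $\int(u_N-g)G_\eps'(u_N)\ge\int(G_\eps(u_N)-G_\eps(g))$, while the dissipation term becomes $-\tau\int f_\eps(u_N)\partial_x I(u_N)\cdot\frac{1}{f_\eps(u_N)}\partial_x u_N=-\tau\int\partial_x I(u_N)\,\partial_x u_N=\tau\|u_N\|_{\dot H^{3/2}_N}^2$ by Proposition \ref{prop:semi-norms}(iii). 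This gives \eqref{eq:Faux}. These bounds control $u_N$ in $\had$ and in $\dot H^{3/2}_N(\Omega)$; in particular, writing $u_N+\tau\partial_x(f_\eps(u_N)\partial_x I(u_N))=\Pi_N g$, and noting that $f_\eps(u_N)$ is bounded in $L^\infty$ (using $H^{3/2}_N\subset W^{1,p}\hookrightarrow L^\infty$ and the energy bound) while $\partial_x I(u_N)$ is bounded in $L^2$, we get $u_N$ bounded in $H^2_N(\Omega)$ by elliptic regularity for $I$ (Theorem \ref{thm:basic-existence}, bootstrapped).

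Finally, pass to the limit $N\to\infty$. Extract a subsequence with $u_N\rightharpoonup u$ weakly in $H^2_N(\Omega)$ and strongly in, say, $H^1(\Omega)$ and $W^{1,p}(\Omega)$ for $p<\infty$ (compact embedding); then $f_\eps(u_N)\to f_\eps(u)$ strongly in every $L^q$ and $\partial_x I(u_N)\rightharpoonup\partial_x I(u)$ weakly in $L^2$, so the product $f_\eps(u_N)\partial_x I(u_N)\rightharpoonup f_\eps(u)\partial_x I(u)$ and every term of \eqref{eq:stationary} passes to the limit against a fixed $\vphi\in V_M$; by density this holds for all $\vphi\in H^1(\Omega)$. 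The a priori estimates pass to the limit by weak lower semicontinuity of the norms, giving \eqref{estim:conservation mass stationary}--\eqref{eq:Faux} (for \eqref{eq:Faux} one also uses Fatou on $\int G_\eps(u_N)$ via a.e. convergence). The main obstacle is the nonlinear term: the degenerate-looking but regularized coefficient $f_\eps(u)$ must be handled by establishing the $L^\infty$ bound on $f_\eps(u_N)$ before one can claim $u_N$ is bounded in $H^2_N$, and it is this bound — which relies crucially on the $\dot H^{3/2}_N\hookrightarrow L^\infty$ embedding in dimension one — that makes the whole scheme close; the weak/strong convergence pairing in the product term then follows, but care is needed that the test function $-I(u_N)$ used for the energy estimate genuinely stays in the Galerkin space (true here since $I$ is diagonal in the basis).
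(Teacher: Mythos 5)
You take a Galerkin route in the eigenbasis $\{\vphi_k\}$, whereas the paper reformulates \eqref{eq:stationary} via $\vphi = -I(v)+\int_\Omega v\,dx$ (Corollary~\ref{cor:aux}), then applies Lions' theorem for pseudo-monotone operators (Proposition~\ref{prop:stat}, Appendix~\ref{app:1}). These are genuinely different existence mechanisms, but there is a concrete error in your coercivity step that prevents the Brouwer argument from closing. Testing \eqref{eq:stationary} with $\vphi=u_N$ gives
\begin{equation*}
\frac1\tau\|u_N\|_{L^2}^2-\frac1\tau\int_\Omega g\,u_N\,dx-\int_\Omega f_\eps(u_N)\,\partial_x I(u_N)\,\partial_x u_N\,dx=0,
\end{equation*}
and the last term is $-\int f_\eps(u_N)\,\partial_x I(u_N)\,\partial_x u_N$, \emph{not} $\tau\int f_\eps(u_N)(\partial_x I(u_N))^2$ as you wrote. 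Since $f_\eps(u_N)$ is nonconstant, this term has no definite sign — Proposition~\ref{prop:semi-norms} only gives $-\int \partial_x I(u)\,\partial_x u=\|u\|^2_{\dot H^{3/2}_N}\ge0$ when the weight is $1$ — so $F_N(a)\cdot a\ge 0$ is not established. This is precisely the obstruction that the paper's change of test function removes: with $\vphi=-I(u_N)+\int_\Omega u_N$ (which is admissible on $V_N$ since $I$ is diagonal in the basis) the nonlinear term becomes $\tau\int f_\eps(u_N)(\partial_x I(u_N))^2\ge0$, and the linear part $\|u_N\|^2_{\dot H^{1/2}}+(\int u_N)^2$ is the one you want to bound. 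To salvage your scheme you should build the finite-dimensional residual from the operator $A$ of the paper rather than from \eqref{eq:stationary} tested against raw $\vphi_k$, and use the coercivity bound $A(u_N)(u_N)\ge\tau\eps\|u_N\|^2_{H^2_N}$ of \eqref{eq:coer}.

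A second, smaller gap: you cannot derive the entropy estimate \eqref{eq:Faux} at the Galerkin level, because $G_\eps'(u_N)$ is not a finite linear combination of the $\vphi_k$ and hence is not in $V_N$; it is therefore not an admissible Galerkin test function. This estimate must be obtained for the limit $u\in H^2_N(\Omega)$, which satisfies \eqref{eq:stationary} against \emph{every} $\vphi\in H^1(\Omega)$, as the paper does by inverting $-I(v)+\int v=G'_\eps(u)$ and inserting the resulting $v$ into \eqref{eq:vstat}. With these two corrections, your compactness and limit-passage reasoning is sound and lands essentially where the paper does.
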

In order to prove such a result, we have to reformulate (\ref{eq:stationary}): 

\paragraph{New formulation of \eqref{eq:stationary}.}
We are going to use classical variational methods to show the
existence of a solution to (\ref{eq:stationary}).  In order to work with a
coercive non-linear operator, we need to take $\vphi=-I(v)$ as a test function. We
note, however, that by doing that we would restrict ourself to test
functions with zero mean value.  In order to recover all test
functions from $H^1(\Omega)$, we use Corollary~\ref{cor:aux} and
consider
\begin{equation}\label{eq:vphi}
\vphi = -I(v) +\int_\Omega v\, dx
\end{equation}
for some function $v\in H^2_N(\Omega)$. Let us emphasize the fact that
Corollary~\ref{cor:aux} implies in particular that there is a
one-to-one correspondence between $\vphi\in H^1(\Omega)$ and $v\in
H^2_N(\Omega)$ satisfying (\ref{eq:vphi}).

Using (\ref{eq:vphi}), Equation (\ref{eq:stationary}) becomes:
\begin{eqnarray}
  &&\!\!\!\!\!\!\!\!- \int_\Omega u \, I(v) \, dx  
+ \left(\int_\Omega u\, dx\right)\left( \int_\Omega v\, dx\right)+ \tau \int_\Omega f_\eps(u)
  \pa_xI (u)\pa_x I(v) \, dx \nonumber \\
  && \qquad \qquad\qquad\qquad \qquad = - \int_\Omega g\, I(v) \, dx 
+ \left(\int_\Omega g\, dx\right)\left( \int_\Omega v\, dx \right).\label{eq:vstat}
\end{eqnarray}
We can now introduce the non-linear operator we are going to work with.

\paragraph{A non-linear operator.}
We define for all $u$ and $v\in H^2_N(\Omega)$ 
$$
A(u)(v)=- \int_\Omega u \, I(v) \, dx  + \left(\int_\Omega u\, dx\right)\left( \int_\Omega v\, dx\right)
+ \tau \int_\Omega f_\eps(u) \pa_xI (u)\pa_x I(v) \, dx.
$$
One can now show that this non-linear operator is continuous, coercive and
pseudo-monotone. Classical theorems imply the existence of a solution to the equation $A(u) =g$ for proper $g$'s. More precisely, we
have the following proposition:
\begin{proposition}[Existence for the new problem]\label{prop:stat}
For all $g\in \ha$ there exists $u\in H^2_N(\Omega)$ such that
\begin{equation}\label{eq:Au}
  A(u)(v)= - \int_\Omega g\, I(v) \, dx + \left(\int_\Omega g\, dx\right)\left( \int_\Omega v\, dx \right)
\quad \mbox{ for all } v\in  H^2_N(\Omega).
\end{equation}
\end{proposition}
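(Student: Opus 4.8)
The plan is to apply a standard surjectivity theorem for pseudo-monotone, coercive operators (Brézis--Lions, see e.g. Lions' book or Showalter): if $A:X\to X^*$ is bounded, continuous on finite-dimensional subspaces, pseudo-monotone, and coercive in the sense that $A(u)(u)/\|u\|_X\to+\infty$ as $\|u\|_X\to\infty$, then $A$ is surjective. Here I would take $X=H^2_N(\Omega)$ and note first that the right-hand side of \eqref{eq:Au} defines a continuous linear functional on $X$: indeed $v\mapsto -\int_\Omega g\,I(v)\,dx = \langle I(g),v\rangle$ is controlled by $\|g\|_{\ha}\|v\|_{\ha}\le C\|g\|_{\ha}\|v\|_{X}$ using the Lemma on $I:H^{\frac12}\to H^{-\frac12}$, and $v\mapsto(\int g)(\int v)$ is obviously continuous. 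So it suffices to verify the three structural properties of $A$.

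\emph{Coercivity.} Taking $v=u$ in the definition of $A$ and using Proposition~\ref{prop:semi-norms} (third and fourth equalities) together with the Lemma,
\[
A(u)(u) = \|u\|^2_{\dot H^{\frac12}(\Omega)} + \Big(\int_\Omega u\,dx\Big)^2 + \tau\int_\Omega f_\eps(u)(\pa_x I(u))^2\,dx .
\]
Since $f_\eps\ge\eps>0$, the last term dominates $\tau\eps\|u\|^2_{\dot H^2_N(\Omega)}$, and the first two terms together control $\|u\|^2_{L^1}+\|u\|^2_{\dot H^{\frac12}}$; hence $A(u)(u)\ge c_\eps\big(\|u\|^2_{\dot H^2_N}+\|u\|^2_{L^1}\big)\ge c_\eps'\|u\|^2_{H^2_N(\Omega)}$, which gives coercivity (in fact with the quadratic rate). \emph{Boundedness.} Each term of $A(u)(v)$ is estimated by $\|u\|_X\|v\|_X$ up to constants: the first via the $I:H^{\frac12}\to H^{-\frac12}$ duality, the second trivially, and for the third one uses $0\le f_\eps(u)\le \|u\|_{L^\infty}^n+\eps\le C(\|u\|_{H^2_N})$ (recall $H^{3/2}_N\subset L^\infty$, and a fortiori $H^2_N\subset L^\infty$) together with $\|\pa_x I(u)\|_{L^2}=\|u\|_{\dot H^2_N}$ and $\|\pa_x I(v)\|_{L^2}=\|v\|_{\dot H^2_N}$; so $A$ maps bounded sets to bounded sets.

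\emph{Pseudo-monotonicity} is the main obstacle, since the operator is genuinely not monotone (the coefficient $f_\eps(u)$ depends on $u$). I would prove it by the standard compactness argument: suppose $u_j\rightharpoonup u$ weakly in $H^2_N(\Omega)$ and $\limsup_j A(u_j)(u_j-u)\le 0$; I must show $A(u_j)(u_j-v)\to A(u)(u-v)$ for all $v$, or equivalently that $A(u_j)\rightharpoonup A(u)$ in $X^*$ and $A(u_j)(u_j)\to A(u)(u)$. The key point is compactness: by Rellich, $u_j\to u$ strongly in $H^{\frac12}(\Omega)$ and in $C(\overline\Omega)$ (since $H^2_N\hookrightarrow H^{\frac12}$ and $\hookrightarrow C^0$ are compact), so $f_\eps(u_j)\to f_\eps(u)$ uniformly, while $\pa_x I(u_j)\rightharpoonup \pa_x I(u)$ weakly in $L^2$. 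The linear terms $-\int u_j I(v)+(\int u_j)(\int v)$ pass to the limit trivially. For the nonlinear term one writes $\int f_\eps(u_j)\pa_x I(u_j)\pa_x I(v) = \int (f_\eps(u_j)-f_\eps(u))\pa_x I(u_j)\pa_x I(v) + \int f_\eps(u)\pa_x I(u_j)\pa_x I(v)$; the first piece $\to 0$ by uniform convergence of $f_\eps(u_j)$ and boundedness of $\pa_x I(u_j)$ in $L^2$, and the second $\to\int f_\eps(u)\pa_x I(u)\pa_x I(v)$ by weak convergence (note $f_\eps(u)\pa_x I(v)\in L^2$ fixed). This shows $A(u_j)\rightharpoonup A(u)$. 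To upgrade to $A(u_j)(u_j)\to A(u)(u)$: the quadratic terms $\|u_j\|^2_{\dot H^{\frac12}}\to\|u\|^2_{\dot H^{\frac12}}$ and $(\int u_j)^2\to(\int u)^2$ by strong $H^{\frac12}$-convergence, so the hypothesis $\limsup A(u_j)(u_j-u)\le 0$ forces $\limsup_j \tau\int f_\eps(u_j)(\pa_x I(u_j))^2\,dx \le \tau\int f_\eps(u)(\pa_x I(u))^2\,dx$ (using that $\int f_\eps(u_j)\pa_x I(u_j)\pa_x I(u)\to \int f_\eps(u)(\pa_x I(u))^2$ by the same splitting). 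Combined with the weak lower semicontinuity of $w\mapsto \int f_\eps(u)w^2$ applied after replacing $f_\eps(u_j)$ by $f_\eps(u)$ in the integrand — again justified by uniform convergence of $f_\eps(u_j)$ — one concludes $\pa_x I(u_j)\to\pa_x I(u)$ strongly in $L^2$, i.e. $u_j\to u$ strongly in $H^2_N(\Omega)$, whence $A(u_j)(u_j)\to A(u)(u)$ and in fact $A(u_j)(v)\to A(u)(v)$ for every $v$. Continuity of $A$ restricted to finite-dimensional subspaces follows from the same estimates (there all convergences are strong). With boundedness, coercivity and pseudo-monotonicity in hand, the cited surjectivity theorem yields $u\in H^2_N(\Omega)$ with $A(u)(v)$ equal to the prescribed functional for all $v\in H^2_N(\Omega)$, completing the proof.
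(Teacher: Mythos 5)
Your proposal is correct and follows essentially the same route as the paper: verify that $A:H^2_N(\Omega)\to (H^2_N(\Omega))'$ is bounded, coercive (via $f_\eps\ge\eps$ and the fourth identity of Proposition~\ref{prop:semi-norms}), and pseudo-monotone (via compactness of $H^2_N(\Omega)\hookrightarrow C(\overline\Omega)$ so that $f_\eps(u_j)\to f_\eps(u)$ uniformly, then split the dissipation term and use weak lower semicontinuity), and invoke Lions' surjectivity theorem. The one small difference is that the paper's Lemma~\ref{lem:pseudo} establishes the stronger property $\liminf_j A(u_j)(u_j-v)\ge A(u)(u-v)$ for every weakly convergent sequence without assuming $\limsup_j A(u_j)(u_j-u)\le 0$; you instead assume that hypothesis and derive strong $H^2_N$ convergence of $u_j$, which is more than is needed but equally valid.
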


For the sake of readability, we postpone the proof of this rather technical proposition to Appendix \ref{app:1}, and we turn to the proof of Proposition~\ref{prop:stationary}.
\begin{proof}[Proof of Proposition~\ref{prop:stationary}]
  For a given $g\in \ha$, Proposition \ref{prop:stat} gives the
  existence of a solution $u\in H^2_N(\Omega)$ of (\ref{eq:vstat}).  We recall that
  for any $\vphi\in H^1(\Omega)$, there exists $v\in H^2_N(\Omega)$ such that
$$
\vphi = -I(v)+\int_\Omega v\, dx
$$
and so equivalently, we have that $u$ satisfies (\ref{eq:stationary}) for
all $\vphi\in H^1(\Omega)$. 

Next, we note that the mass conservation equality
\eqref{estim:conservation mass stationary} is readily obtained by
taking $v=1$ as a test function in (\ref{eq:vstat}), while
\eqref{estim:stationary} follows by taking $v=u-\int_\Omega u\, dx$:
\begin{multline*}
  \|u\|^2_{\had} + \tau \int_\Omega f_\eps (u) |\pa_xI(u)|^2    =   -\int_\Omega g I (u)\, dx   \\ 
  \le  \| g \|_{\had} \|u \|_{\had} 
   \leq  \frac{1}{2} ||g||^2_{\had}+ \frac{1}{2} ||u||^2_{\had} \, .
\end{multline*}

Finally since $G'_\eps$ is smooth with $G'_\eps$ and $G''_\eps$
bounded and $\Omega$ is bounded, we have $G'_\eps(u) \in H^{1}
(\Omega)$.  We can thus find $v\in H^2_N(\Omega)$ such that
$$ -I(v)+\int_\Omega v(x)\, dx = G'_\eps(u).$$
Equation (\ref{eq:vstat}) then implies:
$$
-\int_\Omega u G_\eps'(u) \, dx+ \tau \int_\Omega f_\eps(u) \, F''_\eps
(u) \, \pa_x I(u) \, \pa_x  u \, dx
=- \int _\Omega g G_\eps'(u)\, dx
$$
and so (using the definition of $G_\eps$ given in Proposition~\ref{prop:eps})
$$
- \tau \int_\Omega \pa_x I(u) \; \pa_x u   \, dx = \int _\Omega
G_\eps'(u)(g-u)\, dx
$$
Since $G_\eps$ is convex ($G_\eps''\geq 0$), we have 
$G_\eps'(u)(g-u) \le G_\eps(g)-G_\eps(u)$ and we deduce
\eqref{eq:Faux} (using Proposition~\ref{prop:semi-norms}).
\end{proof}

\subsection{Proof of Proposition \ref{prop:eps} }

In order to construct the solution $u^\eps$ of~\eqref{eq:regularized}, 
we discretize the problem with respect to $t$, and  construct
a piecewise constant function
$$ 
u^\tau (x,t) = u^n (x) \text{ for } t \in (n\tau, (n+1)\tau), n \in \{0, \dots, N+1\},
$$ 
where $\tau= T/N$ and $(u^n)_{n\in\{0,\dots, N+1\}}$ is such that
$$
\frac1\tau (u^{n+1}-u^n) + \partial_x ( f_\eps(u^{n+1}) \partial_x
I (u^{n+1}) ) = 0 \, .
$$

The existence of the $u^n$ follows from Proposition \ref{prop:stationary} by induction on $n$.
We deduce:
\begin{corollary}[Discrete in time approximate solution]\label{cor:approx sol}
  For any $N>0$ and $u_0^\eps \in \ha$, there exists a function
  $u^\tau \in L^\infty (0,T; H^{\frac12}(\Omega))$ such that
\begin{itemize}
\item $t \mapsto u^\tau (x,t)$ is constant on $[k\tau,(k+1)\tau)$ for
  $k \in \{0, \dots, N\}$, $\tau = \frac{T}N$,
\item
$u^\tau  = u_0$ on $[0,\tau) \times \Omega$,
\item
for all $\varphi \in \mathcal{C}^1 (0,T,H^1(\Omega))$,
\begin{equation}\label{eq:approx discrete}
\iint_{Q_{\tau,T}} \frac{u^\tau  - S_\tau u^\tau}{\tau} \varphi  \, dx \, dt=
 \iint_{Q_{\tau,T}} f_\eps (u^\tau )\partial_x I (u^\tau) \partial_x 
\varphi \, dx\, dt
\end{equation}
where $Q_{\tau,T} = (\tau,T) \times \Omega$ and $S_\tau u^\tau (x,t) = u^\tau (t-\tau,x)$. 
\end{itemize}
Moreover, the function $u^\tau$ satisfies
\begin{equation}
\label{estim:conservation mass tau}
\intom u^\tau (x,t) \, dx= \intom u_0(x)\,dx \quad \text{ a.e. } t \in (0,T)
\end{equation}
and for all $t \in (0,T)$
\begin{equation} \label{estim:nrj tau}
\|u^\tau (t,\cdot)\|_{\had}^2 + 2  \int_{0}^t\int_\Omega f_\eps (u^\tau) (\partial_x I (u^\tau))^2 \, dx\, dt \le \| u_0 \|_{\had}^2 
\end{equation}
and if $ \int_\Omega G_\eps(u_0) \, dx <\infty$, then for all $t \in (0,T)$
\begin{equation}\label{estim:entropy tau}
  \int_\Omega G_\eps(u^\tau (t,\cdot)) \, dx +
  \int_{0}^{t}\|u^\tau\|^2_{\dot{H}^{\frac32}_N(\Omega)}\, ds
   \le \int_\Omega G_\eps(u_0) \, dx .
\end{equation}
\end{corollary}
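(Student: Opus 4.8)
The plan is to construct the time slices $u^n$ by iterating the stationary problem of Proposition~\ref{prop:stationary} and then to read each assertion of the corollary off the corresponding estimate in that proposition. First I would set $u^0=u_0\in\ha$ and, for $n\ge0$, define $u^{n+1}\in H^2_N(\Omega)$ as the solution furnished by Proposition~\ref{prop:stationary} applied with $g=u^n$; this is legitimate at every step since $u^0=u_0\in\ha$ and $u^n\in H^2_N(\Omega)\subset\ha$ for $n\ge1$. This produces $u^1,\dots,u^N$ together with, at each step $n$ and with $(u,g)=(u^n,u^{n-1})$, the identity \eqref{eq:stationary}, the mass conservation \eqref{estim:conservation mass stationary}, the energy bound \eqref{estim:stationary}, and --- whenever $\intom G_\eps(u^{n-1})\,dx<\infty$ --- the entropy bound \eqref{eq:Faux}. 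I then let $u^\tau$ be the piecewise constant function equal to $u^n$ on $[n\tau,(n+1)\tau)$ with $\tau=T/N$; since each $u^n$ lies in $\ha$, this gives $u^\tau\in L^\infty(0,T;\ha)$, $u^\tau=u_0$ on $[0,\tau)$, and $u^\tau$ constant on each $[k\tau,(k+1)\tau)$, as required.

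Next I would derive the space--time weak formulation \eqref{eq:approx discrete}. For each $n\in\{1,\dots,N-1\}$ and each fixed $t\in(n\tau,(n+1)\tau)$, testing \eqref{eq:stationary} at step $n$ against $\vphi(\cdot,t)\in H^1(\Omega)$ gives $\frac1\tau\intom(u^n-u^{n-1})\,\vphi(\cdot,t)\,dx-\intom f_\eps(u^n)\,\pa_x I(u^n)\,\pa_x\vphi(\cdot,t)\,dx=0$; since on that slab $u^\tau=u^n$ and $S_\tau u^\tau=u^{n-1}$, integrating in $t$ and summing over $n$ reproduces exactly \eqref{eq:approx discrete} on $Q_{\tau,T}=(\tau,T)\times\Omega$ --- the first slab $(0,\tau)$, on which $u^\tau=u_0$, being precisely what is left out. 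For the a priori bounds: the equality \eqref{estim:conservation mass stationary} telescopes to $\intom u^n\,dx=\intom u_0\,dx$, i.e.\ \eqref{estim:conservation mass tau}; summing the telescoping inequalities \eqref{estim:stationary} from $n=1$ to $m$ and recognizing $\tau\sum_{n=1}^m\intom f_\eps(u^n)\,(\pa_x I(u^n))^2\,dx$ as the space--time integral of $f_\eps(u^\tau)\,(\pa_x I(u^\tau))^2$ over $(\tau,(m+1)\tau)\times\Omega$ yields \eqref{estim:nrj tau}; and, when $\intom G_\eps(u_0)\,dx<\infty$, the same telescoping applied to \eqref{eq:Faux} gives \eqref{estim:entropy tau}, after rewriting $\tau\sum_n\|u^n\|^2_{\dot{H}^{\frac32}_N(\Omega)}$ as a time integral of $\|u^\tau\|^2_{\dot{H}^{\frac32}_N(\Omega)}$.

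I do not expect any genuine analytic difficulty here: the corollary is essentially a bookkeeping consequence of Proposition~\ref{prop:stationary}. The points that require care are, first, getting the backward--Euler index shift right so that the telescoped discrete identities produce \eqref{eq:approx discrete} on the correct interval $(\tau,T)$ rather than $(0,T)$; and, second, checking that the hypothesis $\intom G_\eps(u^{n-1})\,dx<\infty$ needed to invoke \eqref{eq:Faux} at step $n$ propagates along the induction --- which it does precisely because \eqref{eq:Faux} forces $\intom G_\eps(u^n)\,dx\le\intom G_\eps(u^{n-1})\,dx$, so that starting from $\intom G_\eps(u_0)\,dx<\infty$ keeps every $\intom G_\eps(u^n)\,dx$ finite. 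One should also bear in mind that in \eqref{estim:nrj tau}--\eqref{estim:entropy tau} the dissipation and entropy--dissipation integrals are effectively over $(\tau,t)$, since on the first slab one only controls $u^\tau=u_0\in\ha$.
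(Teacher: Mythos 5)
Your argument is exactly the paper's: the paper defines $u^n$ iteratively via Proposition~\ref{prop:stationary} and then asserts that Corollary~\ref{cor:approx sol} ``follows by induction,'' while you fill in the same bookkeeping (piecewise-constant interpolation, slab-by-slab testing, telescoping of the mass/energy/entropy estimates, and the observation that $\intom G_\eps(u^n)\,dx<\infty$ propagates through the induction via \eqref{eq:Faux}). Your remark that the dissipation integrals in \eqref{estim:nrj tau}--\eqref{estim:entropy tau} really live on $(\tau,t)$, since on the first slab $u^\tau=u_0$ is only in $\ha$, is a fair reading of the paper's slightly informal statement and does not signal a gap.
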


It remains to prove that $u^\tau$ converges to a solution of \eqref{eq:eps} as $\tau$ goes to zero.
This is fairly classical and we detail the proof for the interested reader in Appendix \ref{app:B}.

\section{Proof of Theorem~\ref{thm:main}: Case $n \ge 2$}\label{sec:main-proof}

  Proposition~\ref{prop:eps} provides the existence of a solution
  $u^\eps \in L^\infty(0,T;\ha)\cap L^2(0,T;H^2_N(\Omega))$ of
  \eqref{eq:eps}.  Our goal is now to pass to the limit $\eps\to0$. We
  point out that at this point, the solution $u^\eps$ may change sign
  and that it is only at the limit $\eps\rightarrow 0$ that we are
  able to recover a non-negative solution, using the fact that $n\geq 2$.

\paragraph{Step 1: Compactness result.}
First, we note that (\ref{estim:nrj eps}) implies 
\begin{equation}\label{eq:ueH}
||u^\eps(t)||_{H^{\frac12}(\Omega)}\leq ||u_0(t)||_{H^{\frac12}(\Omega)} \quad \mbox{ for all } \eps>0.
\end{equation}

The bound \eqref{eq:ueH} and Sobolev embedding theorems imply
that the sequence $(u^\eps)_{\eps>0}$ is bounded in $L^\infty(0,T;L^{p}(\Omega))$ for
all $p<\infty$ and so $ f_\eps(u^\eps)$ is bounded in
$L^\infty(0,T;L^{p}(\Omega))$ for all $p<\infty$. 
Furthermore, (\ref{estim:nrj eps}) also gives that $f_\eps(u^\eps)^{\frac 1 2} \pa_x I(u^\eps)$ is bounded in $L^2(0,T;L^2(\Omega))$.
We deduce that
$$ f_\eps(u^\eps) \pa_x I(u^\eps) \quad \mbox{ is bounded in } L^2(0,T;L^r(\Omega))$$
for all $r\in [1,2)$.
Writing
$$ \pa_t u^\eps = -\pa_x(f_\eps(u^\eps) \pa_x I(u^\eps)) \qquad \mbox{ in } \mathcal D'(\Omega),$$
we deduce that  $(\partial_t u^\eps)_{\eps>0}$ is bounded in $L^2(0,T;W^{-1,r'}(\Omega))$ for all
$r' \in (2,+\infty)$.

Thanks to the following  embeddings
$$
\ha \hookrightarrow L^q(\Omega) \rightarrow W^{-1,r'}(\Omega)
$$
for all $q<\infty$, if follows (using Aubin's lemma) that 
 $(u^\eps)_{\eps>0}$ is relatively compact in
$\mathcal{C}^0 (0,T,L^q(\Omega))$ for all $q < +\infty$.
Hence, we can extract a subsequence, still denoted by $u^\eps$ such that 
$$u^\eps \lto u \quad \mbox{ in 
$ \mathcal{C}^0 (0,T,L^q(\Omega))$ for all $q<\infty$} $$
and 
$$u^\eps \lto u \quad \mbox{ almost
everywhere in $Q$.}$$

\paragraph{Step 2: Passing to the limit in Equation~\eqref{eq:eps}.}
We now have to pass to the limit in  \eqref{eq:eps}.
We fix $\vphi \in \mathcal{D} (Q)$.
Since $u^\eps \to u$ in $\mathcal{C}^0
(0,T,L^1 (\Omega))$, we have
$$
\iint_Q u^\eps \partial_t \vphi \, dx \, dt \to \iint_Q u
\; \partial_t \vphi\, dx \, dt.
$$

Next, we remark that
\eqref{estim:nrj eps} implies
$$
\eps \iint_Q (\partial_x I (u^\eps))^2  \, dx \, dt
\leq\frac{1}{2} ||u_0||_{H^{\frac 1 2 }(\Omega)}.
$$  
Cauchy-Schwarz inequality thus implies
\begin{eqnarray*}
\iint_Q \eps \partial_x I (u^\eps) \partial_x  \vphi  \, dx \, dt& \lto & 0 \, .
\end{eqnarray*}

Finally, \eqref{estim:nrj eps} implies that $(u^\eps)_+^{\frac n 2} \partial_x I (u^\eps)$ is bounded
in $L^2(0,T,L^2(\Omega))$. Since $u^\eps$ is bounded in
$L^\infty(0,T;L^p(\Omega))$ for all $p<\infty$ we deduce that $
(u^\eps)_+^n \partial_x I (u^\eps) $ is bounded in $L^2(0,T;
L^r(\Omega))$ for all $r\in[1,2)$ and so
$$ 
h^\eps:= (u^\eps)_+^n \partial_x I (u^\eps) \rightharpoonup h \quad
\mbox{ in $L^2(0,T; L^r(\Omega))$-weak.}
$$
Passing to the limit in \eqref{eq:eps}, we get:
$$   \iint_Q u \; \partial_t \vphi\, dx\, dt + \iint_Q h \; \pa_x \vphi\, dx\,dt  = 
- \iint_Q u_0 \vphi(0,\cdot)\, dx\,dt
$$
for all $\vphi \in \mathcal{D} (\overline{Q})$.  

\paragraph{Step 3: Equation for the flux $h$.}
In order to get
\eqref{eq:weak}, it only remains to show that
$$
h=u_+^n \partial_x I (u)
$$
in the following sense:
\begin{equation}\label{eq:hh}
 \iint_Q h \; \phi\, dx\,dt = -\iint_Q  n u_+^{n-1} \pa_x u\; I (u)  \; \phi\, dx\,dt 
-\iint_Q   u_+^n  I (u)  \; \pa_x \phi\, dx\,dt
\end{equation}
for all test function $\phi$ such that $\phi|_{\pa\Omega}=0$; that is
$$
h=\pa_x(u_+^n I(u))-nu_+^{n-1}( \pa_x u) I(u) \quad 
\text{ in } \mathcal D'(\Omega).
$$
For that we note that since
$$ 
\int_\Omega G_\eps(u_0)\, dx\leq C,
$$
Inequality~\eqref{estim:entropy eps} implies that $(u^\eps)_\eps$ is
bounded in $L^2(0,T;H^{\frac{3}{2}}(\Omega))$. Recall that
$(\partial_t u^\eps)_\eps$ is bounded in $L^2(0,T, W^{-1,r'}
(\Omega))$ for all $r' \in (2,+\infty)$.  Aubin's lemma then implies that
$u^\eps$ is relatively compact in $L^2(0,T;H^s(\Omega))$ for $s<3/2$.
In particular, we can assume that
$$ I(u^\eps)\lto I(u) \quad \mbox{ in } L^2(0,T;L^2(\Omega))$$
and 
$$ \pa_x u^\eps \lto \pa_x u \quad \mbox{ in } L^2(0,T;L^p(\Omega)), \mbox{ for all } p<\infty.$$
Writing
\begin{eqnarray*}
  \iint_Q h^\eps \phi  & = &  \iint_Q  (u^\eps)_+^n \partial_x I (u^\eps)  \; \phi\, dx\,dt \\
  & = & -\iint_Q  n (u^\eps)_+^{n-1} \pa_x u^\eps\; I (u^\eps)  \; \phi\, dx\,dt 
  -\iint_Q   (u^\eps)_+^n \; I (u^\eps)  \; \pa_x \phi\, dx\,dt,
\end{eqnarray*}
we see that those estimates, together with the fact that $ u^\eps$ 
converges to $u $ in $L^\infty(0,T;L^p(\Omega))$ for all $p<\infty$,  are enough to
pass to the limit and get \eqref{eq:hh}.
\bigskip

\paragraph{Step 4: Properties of $u$.} 
It is readily seen that $u$ satisfies the conservation of mass
\eqref{estim:conservation mass} (by passing to the limit in
\eqref{estim:conservation mass eps}), and the lower semicontinuity of
the norm implies the entropy inequality \eqref{estim:entropy}.

Next, Inequality \eqref{estim:nrj eps} implies that
$g^\eps=(u_+^\eps)^{\frac n 2}\pa_xI(u^\eps)$ converges weakly in
$L^2((0,T)\times\Omega)$ to a function $g$, and the lower
semicontinuity of the norm implies \eqref{estim:nrj}.  Proceeding as
above we can easily show that
$$
g = \pa_x (u_+^{\frac n 2} I(u))-\frac n 2 u_+^{\frac n 2-1}\pa_x u\;
I(u) \quad \text{ in } \mathcal D'(\Omega).
$$

\paragraph{Step 5: non-negative solutions.}

It remains to prove that $u$ is non-negative.  This will be a
consequence of \eqref{estim:entropy eps} and the fact that $n\geq 
2$. Indeed, we recall that for all $t$ we have
\begin{equation}\label{eq:Ge}
\intom G_\eps (u^\eps(t)\, dx \le \intom G_\eps (u_0) \, dx
\end{equation}
where is such that $G_\eps''(s) = \frac1{(s_+)^n + \eps}$.  As noted
in the introduction, we can take
$$
G_\eps (s) = \int_1^s \int_1^r G''_\eps (t)\, dt\, dr
$$
which is a nonnegative convex function for all $\eps$. Noticing that
we can also write $G_\eps(s)= \int_s^1 \int_r^1 G''_\eps (t) \, dt\, dr$
when $s\leq1$, it is readily seen that $G_\eps(s)$ is decreasing with
respect to $\eps$ (so $G_\eps(s)\leq G_0(s)$ for all $\eps>0$).
Hence
$$
\intom G_\eps (u_0)\, dx \le \int_\Omega G_0 (u_0)\, dx < + \infty \,
.
$$
We deduce (using (\ref{eq:Ge})):
\begin{equation}\label{eq:limsupF}
\limsup_{\eps \to 0} \intom G_\eps (u^\eps(t)) \, dx < + \infty\, .
\end{equation}

Next, we recall that $u^\eps(\cdot,t)$ converges strongly in
$L^p(\Omega)$ to $u(\cdot,t)$. We can thus assume that it also
converges almost everywhere.  Egorov's theorem then implies the
existence of a set ${A}_\eta \subset \Omega$ such that $u^\eps
(\cdot,t) \to u(\cdot,t)$ uniformly in ${A}_\eta$ and $|\Omega
\setminus {A}_\eta|<\eta$. For some $\delta>0$, we now consider
$$
C_{\eta,\delta} = {A}_\eta \cap \{ u(\cdot,t) \le - 2 \delta \}. 
$$
For every $\eta,\;\delta>0$ there exists $\eps_0(\eta,\delta)$ such
that if $\eps \le \eps_0(\eta,\delta)$, then $u^\eps (\cdot,t) \le
-\delta$ in $C_{\eta,\delta}$.

But this implies that $C_{\eta,\delta}$ has measure zero.  Indeed, if
not, then for $\eps \le \eps_0(\eta,\delta)$ we have
$$ 
G_\eps (u^\eps(x,t)) \geq G_\eps (-\delta) \longrightarrow
G_0(-\delta) =+\infty \text{ for all } x\in C_{\eta,\delta}
$$
(we use here the assumption $n \ge 2$)
and by Fatou lemma, we get
$$ 
\liminf_{\eps\to 0} \int_{C_{\eta,\delta}} G_\eps(u^\eps(x,t))\, dx
\geq \int_{C_{\eta,\delta}} \liminf_{\eps\to 0} G_\eps(u^\eps(x,t))\,
dx = +\infty
$$
which contradicts \eqref{eq:limsupF}.

We deduce that for all $\delta>0$ and all $\eta>0$ we have
$$
| \{ u(\cdot,t) \le - 2 \delta \}| \leq |C_{\eta,\delta}|
+|\Omega\setminus A_\eta|\leq \eta
$$
and so $ | \{ u(\cdot,t) \le - 2 \delta \}| =0 $  for all $\delta>0$.
We can  conclude that 
$$
\{u(\cdot,t) < 0 \} = \bigcup_{n \ge 1} \left\{ u (\cdot,t) < - \frac1n \right\}
$$ 
has measure zero and so $u(x,t)\geq $ a.e. $x\in\Omega$ and for all $t>0$.

\section{Proof of Theorem~\ref{thm:main}: Case $n\in [1,2)$}\label{sec:other-proof}
When $n\in [1,2)$ the entropy inequality cannot be used to prove that $\lim_{\eps\to 0} u^\eps$ is non-negative.
We thus  proceed as in Bertozzi and Pugh \cite{BP2}: Introducing
$$ 
f_\delta (s) = \frac{s^{3+n}}{\delta s^n +s^{3}}
$$
and
$$u^\delta_0(x)=u_0(x)+\delta.$$
For $n<2$, we have $f_\delta(s)\sim s^3/\delta$ as $s\to 0$, and so
the corresponding entropy $G_\delta$, defined by
$$
G_\delta (s) = \int_1^s \int_1^r \frac{1}{f_\delta(t)} \, dt\, dr =
\int_1^s \int_1^r \frac \delta {t^3}+\frac 1 {t^n} \, dt\, dr.
$$
satisfies
$$
G_\delta (s) = \delta\left(\frac{1}{2s}+\frac s 2 -1\right) + G_0(s)
$$
where $G_0(s)$ is bounded in the neighborhood of $s=0$.  It is thus
readily seen that there exists $C$ such that
$$
\int_\Omega G_\eps (u^\delta_0(x))\, dx <C.
$$
Furthermore, we have $G_\delta(0)=+\infty$, so the proof developed in
the previous section (regularizing the equation by introducing
$f_{\delta,\eps}(s)= f_\delta(s)+\eps$) implies the existence of a
non-negative solution $u^\delta$ of
$$
\pa_t u^\delta + \pa_x( f_\delta(u^\delta) \pa_x I(u^\delta))=0
$$
satisfying the usual inequalities (mass conservation, energy and entropy inequality).
\medskip

Proceeding as in the previous section, we can now show that the
sequence $(u^\delta)_{\delta>0}$ is relatively compact in
$\mathcal{C}^0 (0,T,L^q(\Omega))$ for all $q<+\infty$ and in
$L^2(0,T;H^s(\Omega))$ for $s<3/2$.  In particular, we can extract a
subsequence, still denoted $u^\delta$, such that
\begin{eqnarray*}
u^\delta \lto u &&  \mbox{ in 
$ \mathcal{C}^0 (0,T,L^q(\Omega))$ for all $q <+\infty$} \\
u^\delta \lto u && \mbox{ almost
everywhere in $Q$}\\
I(u^\delta)\lto I(u) &&  \mbox{ in } L^2(0,T;L^2(\Omega))\\
 \pa_x u^\delta \lto \pa_x u && \mbox{ in } L^2(0,T;L^p(\Omega)), \mbox{ for all } p<+\infty.
 \end{eqnarray*}
Furthermore, since $u^\delta\geq 0$ for all $\delta>0$, we have 
$$ u \geq 0 \quad\mbox{ a.e. } (x,t)\in\Omega\times(0,T).$$
\medskip

In order to pass to the limit in the equation, we mainly need to check
that $f_\delta(u^\delta)$ (respectively $f'_\delta(u^\delta)$)
converges to $u^n$ (respectively $n u^{n-1}$) in $L^p(\Omega)$.  This
is a direct consequence of the convergence almost everywhere of
$u^\delta$, Lebesgue dominated convergence theorem and the fact that
$$ 
f_\delta(s) \leq s^n \mbox{ for all $s\geq 0$}
$$
and
$$
f_\delta'(s) = \frac{3\delta s^{2+2n}+n s^{5+n}}{(\eps s^n+s^3)^2}
\leq (n+2)s^{n-1} \mbox{ for all $s\geq 0$}
$$
(note that we need $n\geq 1$ to complete this computation).

This complete the proof of Theorem \ref{thm:main}.

\section{Proof of Theorem \ref{thm:2}}\label{sec:thm2-proof}
In this section, we prove Theorem \ref{thm:2}. We recall that $n>3$ and we consider the sequence $u^\eps$ of solution of the regularized equation (\ref{eq:regularized}) introduced in the proof of Theorem \ref{thm:main}.

We recall that inequality~\eqref{estim:entropy eps} implies that $(u^\eps)_\eps$ is
bounded in $L^2(0,T;H^{\frac{3}{2}}(\Omega))$. Since $(\partial_t
u^\eps)_\eps$ is bounded in $L^2(0,T, W^{-1,r'} (\Omega))$ for all $r'\in(2,+\infty)$, Aubin's lemma implies that $u^\eps$ converges strongly in
$L^2(0,T ;\mathcal C^\alpha(\Omega))$ for all $\alpha<1$.  We can thus
find a subsequence such that $u^\eps(t)$ converges strongly in
$\mathcal C^\alpha(\Omega) $ for almost every $t$ (that is for all
$t\in P$, where $|(0,T)\setminus P|=0$).

Next, we note that for $t\in P$, $u$ is actually strictly positive. Indeed, if 
$u(x_0,t_0)=0$, then for any $\alpha<1$, there is a constant $C_\alpha$ such that 
$$u(x,t) \leq C|x-x_0|^\alpha$$
We deduce
$$ \int G(u(x,t_0))\, dx \geq \int \frac{1}{(C_\alpha|x-x_0|^\alpha)^{n-2}}\, dx$$
Given $n>3$ we can choose $\alpha<1$ such that $\alpha(n-2)>1$. We
deduce
$$  \int G(u (x,t_0))\, dx =\infty$$
which contradicts (\ref{estim:entropy eps}).
\medskip

We deduce that there exists $\delta >0$ (depending on $t$) such that
for $\eps$ small enough
$$ 
u^\eps(\cdot, t) \geq \delta \mbox{ in } \Omega.
$$

Next, we note that after removing another set of measure zero to $P$,
we can always assume that
$$\liminf_{\eps \rightarrow 0}  \int f_\eps(u^\eps ) |\pa_x I(u^\eps)|^2 \, dx <\infty \quad \mbox{ for all }t\in P.$$
Indeed, if we denote
$$ A_k=\{ t\in P\, ;\,  \liminf_{\eps \rightarrow 0}  \int f_\eps(u^\eps ) |\pa_x I(u^\eps)|^2 \, dx \geq k\}$$
we have (using Fatou's lemma):
\begin{eqnarray*} 
C & \geq &  \liminf_{\eps \rightarrow 0}  \int_0^T \int f_\eps(u^\eps ) |\pa_x I(u^\eps)|^2 \, dx \, dt \\
& \geq &  \liminf_{\eps \rightarrow 0}  \int_{A_k} \int f_\eps(u^\eps ) |\pa_x I(u^\eps)|^2 \, dx \, dt \\
& \geq &  \int_{A_k}   \liminf_{\eps \rightarrow 0} \int f_\eps(u^\eps ) |\pa_x I(u^\eps)|^2 \, dx \, dt \\
& \geq &k|A_k|
\end{eqnarray*}
We deduce $|A_k|\leq C/k$ and thus
$$
| \{  t\in P\, ;\,  \liminf_{\eps \rightarrow 0}  \int f_\eps(u^\eps ) |\pa_x I(u^\eps)|^2 \, dx =\infty\}|=0.
$$
\medskip

It follows that for $t\in P$, we have
$$ \liminf_{\eps \rightarrow 0} |\pa_x I(u^\eps)|^2 \, dx <\infty$$
and so
$$ u^\eps (\cdot,t) \rightharpoonup u(\cdot,t) \quad \mbox{ in } H^2_N(\Omega)\mbox{-weak} .$$

In particular we can pass to the limit in the flux $J_\eps = f_\eps(u^\eps) \pa_x I(u^\eps)$ and write
$$ \lim_{\eps\to 0} J_\eps = J = f(u)\pa_x I(u)  \qquad \mbox{ in } L^1(\Omega),\mbox{ a.e. $t\in(0,T)$ } .$$
Furthermore, we note that we recover the boundary condition in the classical sense: 
$$ u_x (x,t)= 0 \quad \mbox{ for $x\in\pa\Omega$ and a.e. $t\in (0,T)$.}$$

\appendix

\section{Proof of Proposition \ref{prop:stat}} \label{app:1}
We denote
$$V=H^2_N(\Omega).
$$
For any $u\in V$ the functional
  $A(u)$ is clearly linear on $V$ and since $V$ is continuously
  embedded in $L^\infty(\Omega)$, we have
\begin{equation}\label{eq:cont}
  |A(u)(v)| \leq \left[||u||_{H^{\frac12}(\Omega)}
    +  \tau (\eps+||u||_{V}^3)||u||_{V} \right] ||v||_{V} \, .
\end{equation}
(Note that we used Proposition~\ref{prop:semi-norms} to get this
inequality).  The non-linear operator $A$ is thus well-defined as a
map from $V$ to $V'$. Moreover, it is bounded.

Next, we remark that we have
$$ 
A(u)(u)\geq - \int_\Omega u \, I(u) \, dx  + \left(\int_\Omega u\, dx\right)^2
+ \eps \int_\Omega  |\pa_xI (u)|^2 \, dx.
$$
We deduce from Proposition~\ref{prop:semi-norms} that
\begin{equation}\label{eq:coer}
A(u)(u)\geq \tau \eps ||u|_{H^2_N(\Omega)}^2.
\end{equation}
In particular, we have
$$
\frac{A(u)(u)}{\|u\|_V} \to +\infty \quad \mbox{  as } \|u\|_V
\to +\infty.
$$ 
The operator $A$ is thus coercive.  Proposition~\ref{prop:stat} will
now be a consequence of classical theorems if we prove that $A$ is a
pseudo-monotone operator.  Since we already know that $A$ is bounded, 
it remains to prove  the following lemma:
\begin{lemma}[$A$ is pseudo-monotone]\label{lem:pseudo}
  Let $u_j$ be a sequence of functions in $V$ such that $u_j
  \rightharpoonup u$ weakly in $V$.  Then
$$
\liminf_j \; A(u_j) (u_j-v) \ge A(u) (u-v). 
$$
\end{lemma}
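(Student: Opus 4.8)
The plan is to combine the compact embedding $V=H^2_N(\Omega)\hookrightarrow H^1(\Omega)$ (which in dimension one also embeds compactly into $C(\overline\Omega)$) with the weak lower semicontinuity of $L^2$-norms. It suffices to establish the inequality along a subsequence of $(u_j)$ realizing $\liminf_j A(u_j)(u_j-v)$, which I relabel $(u_j)$. Since $u_j\rightharpoonup u$ in $V$, Rellich's theorem produces a further subsequence along which $u_j\to u$ strongly in $H^1(\Omega)$ (the limit is necessarily $u$, by uniqueness of weak limits), hence uniformly on $\overline\Omega$ and strongly in $\ha$. Because $f_\eps(s)=s_+^n+\eps\ge\eps>0$ and $s\mapsto\sqrt{f_\eps(s)}$ is continuous, the uniform bound on $(u_j)$ in $L^\infty(\Omega)$ gives $f_\eps(u_j)\to f_\eps(u)$ and $\sqrt{f_\eps(u_j)}\to\sqrt{f_\eps(u)}$ uniformly on $\overline\Omega$. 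Finally, by Proposition~\ref{prop:semi-norms} the linear map $w\mapsto\pa_x I(w)$ is bounded from $V$ to $L^2(\Omega)$, hence weak-weak continuous, so $\pa_x I(u_j)\rightharpoonup\pa_x I(u)$ in $L^2(\Omega)$.

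Next I would expand, using the linearity of $\pa_x I$,
\begin{multline*}
A(u_j)(u_j-v)= \|u_j\|^2_{\dot{H}^{\frac12}(\Omega)}+\intom u_j I(v)\,dx+\left(\intom u_j\,dx\right)\left(\intom (u_j-v)\,dx\right)\\
+\tau\intom f_\eps(u_j)\,|\pa_x I(u_j)|^2\,dx-\tau\intom f_\eps(u_j)\,\pa_x I(u_j)\,\pa_x I(v)\,dx.
\end{multline*}
The first three terms converge to their values at $u$: the $\ha$-seminorm by strong $\ha$-convergence, and the other two because $u_j\to u$ in $L^2(\Omega)$ while $I(v)\in L^2(\Omega)$. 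In the cross term, $f_\eps(u_j)\,\pa_x I(v)\to f_\eps(u)\,\pa_x I(v)$ strongly in $L^2(\Omega)$ (uniform convergence times a fixed $L^2$ function), which, paired with the weak convergence $\pa_x I(u_j)\rightharpoonup\pa_x I(u)$, shows that this term converges to $-\tau\intom f_\eps(u)\,\pa_x I(u)\,\pa_x I(v)\,dx$.

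The one term that is merely lower semicontinuous --- and the heart of the matter --- is the degenerate term $\tau\intom f_\eps(u_j)\,|\pa_x I(u_j)|^2\,dx$. Here I would resist passing to the limit inside the product (impossible, since $\pa_x I(u_j)$ converges only weakly in $L^2(\Omega)$) and instead rewrite it as $\tau\,\big\|\sqrt{f_\eps(u_j)}\,\pa_x I(u_j)\big\|_{L^2(\Omega)}^2$, observing that $\sqrt{f_\eps(u_j)}\,\pa_x I(u_j)\rightharpoonup\sqrt{f_\eps(u)}\,\pa_x I(u)$ weakly in $L^2(\Omega)$ --- the product of a uniformly convergent sequence of multipliers with a weakly convergent sequence, using $f_\eps\ge\eps>0$. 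Weak lower semicontinuity of the $L^2$-norm then yields $\liminf_j\tau\intom f_\eps(u_j)\,|\pa_x I(u_j)|^2\,dx\ge\tau\intom f_\eps(u)\,|\pa_x I(u)|^2\,dx$, an inequality that points in exactly the direction we need. Adding the limits of the four convergent pieces to this lower bound reproduces $A(u)(u-v)$, hence $\liminf_j A(u_j)(u_j-v)\ge A(u)(u-v)$. The only genuine obstacle is thus this degenerate term; the remaining pieces are routine once the strong $H^1(\Omega)$ (hence uniform, hence $\ha$) convergence of $(u_j)$ is secured via Rellich and once Proposition~\ref{prop:semi-norms} is invoked for the identities relating $I$ to the Sobolev seminorms.
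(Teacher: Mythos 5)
Your proof is correct and follows essentially the same route as the paper's: compact embedding of $V=H^2_N(\Omega)$ into $L^\infty(\Omega)$ (hence uniform convergence of $u_j$ and of $f_\eps(u_j)$), weak convergence of $\pa_x I(u_j)$ in $L^2(\Omega)$, convergence of the non-degenerate terms, and weak lower semicontinuity of the $L^2$-norm for the degenerate term. The only (cosmetic) difference is in handling that degenerate term: the paper splits $\intom f_\eps(u_j)|\pa_x I(u_j)|^2$ into $\intom\big(f_\eps(u_j)-f_\eps(u)\big)|\pa_x I(u_j)|^2$ (which vanishes) plus $\intom f_\eps(u)|\pa_x I(u_j)|^2$ (to which it applies lower semicontinuity), whereas you observe directly that $\sqrt{f_\eps(u_j)}\,\pa_x I(u_j)\rightharpoonup\sqrt{f_\eps(u)}\,\pa_x I(u)$ in $L^2(\Omega)$ and invoke lower semicontinuity once; these are equivalent. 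Two very minor remarks: your parenthetical ``using $f_\eps\ge\eps>0$'' is not needed for the weak convergence of the product (continuity of $\sqrt{f_\eps}$ together with uniform convergence and boundedness of $u_j$ suffices), and the subsequence extraction via Rellich is unnecessary since $u_j\rightharpoonup u$ in $V$ already forces the whole sequence to converge strongly in $H^1(\Omega)$; neither affects the validity of the argument.
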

Before we prove this lemma, let us notice that for $g \in H^{\frac12} (\Omega)$, the application
$$ 
T_g:v\mapsto -\int_\Omega g I( v) \, dx +\left(\int_\Omega g\, dx \right) \left(\int_\Omega v\, dx \right)
$$
belongs to $V'$.
Hence, using Theorem~2.7 (page~180) of \cite{lions69}, we deduce that
for all $g \in H^{\frac12}(\Omega)$, there exists a function $u \in
V$ such that $A (u) = T_g$ in $V'$, which completes the proof of Proposition \ref{prop:stat}.

\vspace{20pt}

It remains to prove Lemma~\ref{lem:pseudo}.
\begin{proof}[Proof of Lemma \ref{lem:pseudo}]
We first write
\begin{eqnarray}
  A(u_j) (u_j-v) &=& -\intom u_j I(u_j-v) \, dx
+ \left(\intom u_j\, dx \right) \left(\intom (u_j-v) \, dx \right)   \nonumber\\
  & & + \tau \intom f_\eps (u_j) \pa_x I (u_j)  
  \pa_x I(u_j - v) \, dx \nonumber \\ 
  &=&   \| u_j\|^2_{H^{\frac12}(\Omega)}  - \langle u_j, v \rangle_{H^{\frac{1}{2}}}\nonumber\\
  && + \tau \intom f_\eps (u_j) (\pa_x I (u_j))^2  -  \tau \intom f_\eps (u_j) \pa_x (I  u_j) \pa_x (I v)
\label{eq:pseudo-proof}
\end{eqnarray}
where
$$ 
\langle u, v \rangle_{H^{\frac{1}{2}}} = -\int_\Omega uI(v)\, dx
+\left(\int_\Omega u\, dx\right)\left( \int_\Omega v\, dx\right).
$$
We need to check that we can pass to the limit in each of those terms.

Since $u_j$ converges weakly in $V$ we immediately get
$$
\liminf_{j \to +\infty} \| u_j\|^2_{H^{\frac12}(\Omega)} \geq \| u\|^2_{H^{\frac12}(\Omega)} 
$$
and
$$
\lim_{j\to+\infty} \langle u_j, v \rangle_{H^{\frac{1}{2}}} = - \langle
u,v \rangle_{H^{\frac{1}{2}}} .
$$
Since $u_j$ is bounded in $H^2_N(\Omega)$, it is compact in $L^\infty(\Omega)$, and so
$f_\eps(u_j)$ converges to $f_\eps(u)$ strongly in $L^\infty(\Omega)$.  We thus
write
\begin{eqnarray*}
   \intom f_\eps (u_j) (\pa_x I (u_j))^2 & = & 
  \intom \big( f_\eps(u_j)-f_\eps (u) \big) (\pa_x I(u_j))^2 + \intom  f_\eps(u) (\pa_xI(u_j))^2 \\
  & \ge & - \|f_\eps(u_j)-f_\eps (u) \|_{L^\infty(\Omega)} \|u_j\|^2_V +  \intom  f_\eps (u) (\pa_x I(u_j))^2.
\end{eqnarray*}
The first term goes to zero and we have
$$
\sqrt{f_\eps(u)} \pa_xI( u_j) \rightharpoonup \sqrt{f_\eps(u)} 
\pa_x I( u) \mbox{ in } L^2 (\Omega).
$$
Again, the lower semicontinuity of the $L^2$-norm gives
 $$\lim_{j\to\infty} \tau \intom f_\eps (u) (\pa_x I (u_j))^2 \geq  \intom  f_\eps (u) (\pa_x I(u))^2.$$
Finally, we have
\begin{eqnarray*}
  f_\eps (u_j) \pa_x I (v) & \to & f_\eps (u) \partial_x I (v) \quad  \mbox{ in } L^2 (\Omega) \mbox{ strong}, \\
  \pa_x I (u_j)& \rightharpoonup & \pa_x I (u) \hspace{11mm} \mbox{ in } L^2(\Omega) \mbox{ weak}
\end{eqnarray*}
which gives the convergence of the last term in
(\ref{eq:pseudo-proof}) and completes the proof of the lemma.
\end{proof}

\section{Proof of Proposition~\ref{prop:eps}} \label{app:B}
The proof of  Proposition~\ref{prop:eps} is divided in three steps. 

\paragraph{Step 1: \emph{a priori} estimates.}
We summarize the a priori estimates in the following lemma:
\begin{lemma}[\textit{A priori} estimates] \label{lem:a priori tau}
The solution $u^\tau$ constructed in Corollary~\ref{cor:approx sol}
satisfies 
\begin{eqnarray}
\label{estim:approx1}
\| u^\tau \|_{L^\infty(0,T,\ha)} &\le& \| u_0^\eps
\|_{\ha} \, ,\\
 \label{estim:approx3}
\sqrt{\eps} \| \partial_x I (u^\tau) \|_{L^2(Q)} 
&\le& C \, , \\
\label{estim:approx2}
\left\| \frac{u^\tau - S_\tau u^\tau}\tau \right
\|_{L^2(\tau,T,W^{-1,r'} (\Omega))} &\le& C  ,
\end{eqnarray}
for all $r' \in (2,+\infty)$ where $C$ does not depend on $\tau>0$
(but does depend on $r'$).
\end{lemma}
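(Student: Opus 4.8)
The plan is to derive the three estimates \eqref{estim:approx1}--\eqref{estim:approx2} directly from the discrete estimates already recorded in Corollary~\ref{cor:approx sol}, since $u^\tau$ is exactly the piecewise-constant interpolant of the sequence $(u^n)$ produced there. The first two bounds are immediate; the third requires a small argument to convert a spatial estimate on the flux into a bound on the discrete time derivative in the negative-order space $W^{-1,r'}(\Omega)$.

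First I would prove \eqref{estim:approx1}: this is literally \eqref{estim:nrj tau} with the dissipation term dropped, recalling that $\|u^\tau(t,\cdot)\|_{\ha}^2 = \|u_0\|_{L^1(\Omega)}^2 + \|u^\tau(t,\cdot)\|_{\had}^2$ by mass conservation \eqref{estim:conservation mass tau} (the $L^1$ norm of $u^\tau(t,\cdot)$ equals $\intom u_0$ because $u_0\geq 0$... but in fact $u^\tau$ may change sign, so one should instead note that $\|u^\tau\|_{L^1}\le\|u^\tau\|_{L^2}\le C\|u^\tau\|_{\had}+C\|u^\tau\|_{L^1}$ via Poincaré-type control, or simply absorb the zero-frequency mode using $c_0=\intom u_0$ fixed; in any case the homogeneous dissipation estimate \eqref{estim:nrj tau} plus the fixed mean gives a uniform $\ha$ bound). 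Next, \eqref{estim:approx3} follows at once from \eqref{estim:nrj tau} as well: since $f_\eps(u^\tau)\geq\eps$ pointwise, we have $\eps\intom(\pa_xI(u^\tau))^2 \leq \intom f_\eps(u^\tau)(\pa_xI(u^\tau))^2$, and integrating in $t$ and using \eqref{estim:nrj tau} bounds $\eps\|\pa_xI(u^\tau)\|_{L^2(Q)}^2$ by $\tfrac12\|u_0^\eps\|_{\had}^2$.

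The one step that needs genuine work is \eqref{estim:approx2}. Here I would use the discrete equation \eqref{eq:approx discrete}: for a fixed $t\in(\tau,T)$ and any test function $\vphi\in H^1(\Omega)$,
\[
\left\langle \frac{u^\tau(t)-S_\tau u^\tau(t)}{\tau},\,\vphi\right\rangle
= -\intom f_\eps(u^\tau(t))\,\pa_x I(u^\tau(t))\,\pa_x\vphi\,dx.
\]
To estimate the right-hand side I would combine the $L^\infty(0,T;L^p(\Omega))$ bound on $f_\eps(u^\tau)$ — which comes from \eqref{estim:approx1} and the Sobolev embedding $H^{1/2}(\Omega)\hookrightarrow L^p(\Omega)$ for every $p<\infty$, giving boundedness of $(u^\tau)_+^n+\eps$ in $L^p$ — with the $L^2(Q)$ bound on $f_\eps(u^\tau)^{1/2}\pa_xI(u^\tau)$ from \eqref{estim:nrj tau}. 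By Hölder this yields $f_\eps(u^\tau)\pa_xI(u^\tau)\in L^2(0,T;L^r(\Omega))$ uniformly for every $r\in[1,2)$ (write $f_\eps(u^\tau)\pa_xI(u^\tau)=f_\eps(u^\tau)^{1/2}\cdot f_\eps(u^\tau)^{1/2}\pa_xI(u^\tau)$ and put the first factor in a high $L^p$ and the second in $L^2$). Consequently $\pa_x\big(f_\eps(u^\tau)\pa_xI(u^\tau)\big)$ is bounded in $L^2(0,T;W^{-1,r}(\Omega))$, and dualizing $W^{1,r'}\hookrightarrow W^{1,2}$ we get the stated $W^{-1,r'}(\Omega)$ bound for every $r'\in(2,\infty)$ (the constant blowing up as $r'\to\infty$, consistent with the statement). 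The main obstacle is purely bookkeeping: tracking which Hölder exponents are admissible and checking that the constants genuinely do not depend on $\tau$ — which holds because \eqref{estim:nrj tau} and \eqref{estim:approx1} are $\tau$-uniform — rather than any conceptual difficulty.
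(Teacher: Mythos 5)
Your proposal is correct and follows essentially the same route as the paper: both obtain \eqref{estim:approx1} and \eqref{estim:approx3} directly from \eqref{estim:conservation mass tau} and \eqref{estim:nrj tau}, and both prove \eqref{estim:approx2} by writing the discrete time derivative as $\pa_x\big(-f_\eps(u^\tau)\pa_x I(u^\tau)\big)$, showing the flux is bounded in $L^2(\tau,T;L^r(\Omega))$ for $r\in[1,2)$ via H\"older, and dualizing. The only (immaterial) difference is the H\"older factorization: the paper pairs $f_\eps(u^\tau)\in L^\infty L^p$ with $\pa_x I(u^\tau)\in L^2 L^2$ (using the $\eps$-floor of $f_\eps$, i.e.\ \eqref{estim:approx3}), whereas you split $f_\eps(u^\tau)^{1/2}\cdot f_\eps(u^\tau)^{1/2}\pa_x I(u^\tau)$ and use the dissipation term from \eqref{estim:nrj tau} directly, which is a slightly more $\eps$-robust bookkeeping but gives the same $\tau$-uniform bound.
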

\begin{proof}
  Estimate~\eqref{estim:approx1} and \eqref{estim:approx3} are direct
  consequences of \eqref{estim:conservation mass tau} and
  \eqref{estim:nrj tau}.

Next, we note that 
$$
\frac{u^\tau - S_\tau u^\tau}\tau =  \partial_x \bigg(-f_\eps
(u^\tau)\partial_x I (u^\tau)  \bigg) \, .
$$
The bound \eqref{estim:approx1} and Sobolev embedding theorems imply
that the sequence $(u^\tau)_{\tau>0}$ is bounded in $L^\infty(0,T,L^{p}(\Omega))$ for
all $p<\infty$ and so $ f_\eps(u^\tau)$ is bounded in
$L^\infty(0,T,L^{p}(\Omega))$ for all $p<\infty$.  Since $ \partial_x
I (u^\tau)$ is bounded in $L^2(Q)$, we deduce that
$f_\eps(u^\tau) \partial_x I (u^\tau)$ is bounded in $L^2
(\tau,T,L^r(\Omega))$ for all $r\in [1,2)$.  It follows that
$$
\pa_x \left( f_\eps(u^\tau) \partial_x I (u^\tau )\right) \text{ is
  bounded in } L^2(\tau,T,W^{-1,r'}(\Omega)) 
$$
for all $r' \in (2,\infty)$. 
\end{proof}

\paragraph{Step 2: Compactness result.}
Thanks to the following imbeddings
$$
\ha \hookrightarrow L^q(\Omega) \rightarrow W^{-1,r'}(\Omega)
$$
for all $q<\infty$,
we can use
Aubin's lemma to obtain that $(u^\tau)_\tau$ is relatively compact in
$\mathcal{C}^0 (0,T,L^q(\Omega))$ for all $q<\infty$. 

Remark that $(\partial_x I(u^\tau))_\tau$ is bounded in $L^2 (Q)$ and $(u^\tau)_\tau$ is
bounded in $L^\infty(0,T;L^1 (\Omega))$. It follows that $(u^\tau)_\tau$ is
bounded in $ L^2 (0,T,H^2_N(\Omega))$. Since
$$
H^2_N(\Omega) \hookrightarrow H^{\frac{3}{2}}_N(\Omega) \rightarrow W^{-1,r'}(\Omega)
$$
we deduce that $(u^\tau)_\tau$ is relatively compact in
$L^2(0,T;H^{\frac{3}{2}}_N(\Omega))$.  Up to a subsequence, we can
thus assume that as $\tau\rightarrow 0$, we have the following convergences:
\begin{itemize}
\item $u^\tau \to u^\eps\in L^\infty(0,T,\ha) $ almost everywhere in $Q$;
\item $u^\tau \to u^\eps $ in $L^2(0,T,H^1(\Omega))$ strong;
\item $\partial_x I (u^\tau) \rightharpoonup \partial_x I (u^\eps)$ in
  $L^2 (Q)$ weak.
\end{itemize} 

\paragraph{Step 3: Derivation of Equation \eqref{eq:eps}.}
We want to pass to the limit in \eqref{eq:approx discrete}. 

We fix $\varphi \in \mathcal{C}_c^1 ([0,T),H^1(\Omega))$. Then
\begin{multline*}
  \iint_{Q_\tau} \frac{u^\tau - S_\tau u^\tau}{\tau} \varphi =
  \iint_{Q} u^\tau (x,t) \frac{\varphi (x,t) - \varphi
    (t+\tau,x)}\tau \\
  - \frac1\tau \int_0^\tau \intom u^\tau(x,t) \varphi (x,t) \, dx +
  \frac1\tau \int_{T-\tau}^T \intom u^\tau(x,t) \varphi (x,t) \, dx .
\end{multline*}
We deduce:
$$
\iint_{Q_\tau} \frac{u^\tau  - S_\tau u^\tau}{\tau} \varphi  
\to  -\iint_Q u^\eps (\partial_t \varphi) 
 -\intom u^\eps (0,x)  \vphi (0,x) \, dx + 0 .
$$
It remains to pass to the limit in the non-linear term. Let $\eta>0$.
Since $u^\tau \to u^\eps$ almost everywhere in $Q$, Egorov's theorem
yields the existence of a set $A_\eta \subset Q$ such that $|Q
\setminus A_\eta |\le \eta$ and
$$
u^\tau \to u^\eps \text{ uniformly in } A_\eta \, .
$$
In particular, 
$$
\sqrt{f_\eps (u^\tau)} \partial_x  \vphi \to \sqrt{f_\eps(u^\eps)} \partial_x  \vphi \text{ in }
L^2 (A_\eta) 
$$
and
\begin{equation}\label{conv-nonlinear-tau}
\sqrt{f_\eps (u^\tau)} \partial_x I (u^\tau) \rightharpoonup \sqrt{f_\eps
(u^\eps)} \partial_x I (u^\eps) \text{ in } L^2 (A_\eta) \, .
\end{equation}
Hence
$$
\int_{A_\eta} f_\eps (u^\tau) \partial_x I (u^\tau) \partial_x  \vphi \to 
\int_{A_\eta} f_\eps (u^\eps) \partial_x I (u^\eps) \partial_x  \vphi
$$
as $\tau$ goes to zero. 

Finally,  we look at what happens on $Q \setminus A_\eta$. Choose 
$p_1,p_2, p_3$ such that $\sum_i p_i^{-1}=1$ and write
\begin{multline*}
  \int_{Q \setminus A_\eta} |f_\eps(u^\tau) \partial_x I
 ( u^\tau) \partial_x  \vphi| \\ \le \|\partial_x 
  \vphi\|_{L^\infty(0,T,L^{p_1} (\Omega))} \int_0^T \| f_\eps
  (u^\tau) \partial_x I( u^\tau) \|_{L^{p_2} (\Omega)}
\|  \un_{Q \setminus A_\eta} \|_{L^{p_3} (\Omega)}   \\
  \le \|\partial_x \vphi\|_{L^\infty(0,T,L^{p_1} (\Omega))} \| f_\eps
  (u^\tau) \partial_x I (u^\tau) \|_{L^2(0,T,L^{p_2} (\Omega))} \|
  \un_{Q \setminus A_\eta} \|_{L^2(0,T,L^{p_3} (\Omega))} \, .
\end{multline*}
We now choose $p_2 \in [1,2)$ (and so $p_1>2$ and $p_3 >2$). 
$$
  \int_{Q \setminus A_\eta} |f_\eps(u^\tau) \partial_x I
  (u^\tau) \partial_x \vphi| \le C(\vphi) \|  \un_{Q \setminus A_\eta}
  \|_{L^{p_3} (Q)} \le C(\vphi) \eta^{\frac1{p_3}} \, .
$$
Since $\eta$ is arbitrary, the proof is complete. 

\paragraph{Step 4: Inequalities.}
Since $u^\tau \to u^\eps$ in $L^\infty(0,T,L^1(\Omega))$, mass
conservation equation~\eqref{estim:conservation mass eps} follows from
\eqref{estim:conservation mass tau}.  \smallskip

Estimate~\eqref{estim:nrj eps} follows from \eqref{estim:nrj
  tau}. Indeed, since $u^\tau \to u^\eps$ almost everywhere,
Proposition~\ref{prop:semi-norms} and Fatou's lemma imply that for
almost $t \in (0,T)$
$$
\|u^\eps (t)\|^2_{\dot{H}^{\frac12}(\Omega)} \le \liminf_{\tau \to 0}
\|u^\tau (t)\|^2_{\dot{H}^{\frac12}(\Omega)}.
$$
Thanks to \eqref{conv-nonlinear-tau}, we also have
\begin{multline*}
\int_0^T\int_\Omega f_\eps (u^\eps) |\partial_x I (u^\eps)|^2 \, dx\, dt\leq
\liminf_{\tau\to 0} \int_0^T\int_\Omega f_\eps (u^\tau) (\partial_x I
(u^\tau))^2\, dx\,dt \\ +\int_0^T\int_{\Omega \setminus A_\eta}
 f_\eps (u^\eps) |\partial_x I (u^\eps)|^2 \, dx\, dt.
\end{multline*}
Letting $\eta \to 0$ permits to conclude.
\smallskip

To derive \eqref{estim:entropy eps} we note that 
 $G_\eps(u^\tau)\to F_\eps(u^\eps)$ almost everywhere. So
Fatou's Lemma implies for almost every $t \in (0,T)$
$$ 
\int_\Omega G_\eps(u^\eps(x,t))\, dx \leq \liminf_{\tau \to 0}
\int_\Omega G_\eps(u^\tau(x,t))\, dx \leq \int_\Omega G_\eps(u_0)\,
dx.
$$
Finally, since $(u^\tau)_\tau$ is relatively compact in
$L^2(0,T;H^{\frac{3}{2}}_N(\Omega))$, we have
$$ 
\int_0^t ||u^\eps(s) ||^2_{\dot{H}^{\frac32}}\, ds = \lim_{\tau\to 0}
\int_0^t ||u^\tau (s) ||^2_{\dot{H}^{\frac32}}\, ds
$$
and so \eqref{estim:entropy eps}  follows from \eqref{estim:entropy tau}.

\nocite{*}
\bibliographystyle{siam}
\bibliography{crack}

\begin{thebibliography}{10}

\bibitem{ad}
{\sc J.~I. Adachi and E.~Detournay}, {\em Plane-strain propagation of a
  fluid-driven fracture: finite toughness self-similar solution}, Proc. Roy.
  Soc. London Series A,  (1994).

\bibitem{AP08}
{\sc J.~I. Adachi and A.~P. Peirce}, {\em Asymptotic analysis of an elasticity
  equation for a finger-like hydraulic fracture}, J. Elasticity, 90 (2008),
  pp.~43--69.

\bibitem{af03}
{\sc R.~A. Adams and J.~J. Fournier}, {\em Sobolev spaces}, Academic Press,
  2003.
\newblock Pure and Applied Mathematics, Vol. 140.

\bibitem{aa03}
{\sc M.~S. Agranovich and B.~A. Amosov}, {\em On {F}ourier series in
  eigenfunctions of elliptic boundary value problems}, Georgian Math. J., 10
  (2003), pp.~401--410.
\newblock Dedicated to the 100th birthday anniversary of Professor Victor
  Kupradze.

\bibitem{BBD}
{\sc E.~Beretta, M.~Bertsch, and R.~Dal~Passo}, {\em Nonnegative solutions of a
  fourth-order nonlinear degenerate parabolic equation}, Arch. Rational Mech.
  Anal., 129 (1995), pp.~175--200.

\bibitem{bernis86}
{\sc F.~Bernis}, {\em Finite speed of propagation and asymptotic rates for some
  nonlinear higher order parabolic equations with absorption}, Proc. Roy. Soc.
  Edinburgh Sect. A, 104 (1986), pp.~1--19.

\bibitem{bernis95}
\leavevmode\vrule height 2pt depth -1.6pt width 23pt, {\em Viscous flows,
  fourth order nonlinear degenerate parabolic equations and singular elliptic
  problems}, in Free boundary problems: theory and applications (Toledo, 1993),
  vol.~323 of Pitman Res. Notes Math. Ser., Longman Sci. Tech., Harlow, 1995,
  pp.~40--56.

\bibitem{bernis96b}
{\sc F.~Bernis}, {\em Finite speed of propagation and continuity of the
  interface for thin viscous flows}, Adv. Differential Equations, 1 (1996),
  pp.~337--368.

\bibitem{bernis96}
\leavevmode\vrule height 2pt depth -1.6pt width 23pt, {\em Finite speed of
  propagation for thin viscous flows when {$2\leq n<3$}}, C. R. Acad. Sci.
  Paris S\'er. I Math., 322 (1996), pp.~1169--1174.

\bibitem{bernis96c}
\leavevmode\vrule height 2pt depth -1.6pt width 23pt, {\em Integral
  inequalities with applications to nonlinear degenerate parabolic equations},
  in Nonlinear problems in applied mathematics, SIAM, Philadelphia, PA, 1996,
  pp.~57--65.

\bibitem{bf90}
{\sc F.~Bernis and A.~Friedman}, {\em Higher order nonlinear degenerate
  parabolic equations}, J. Differential Equations, 83 (1990), pp.~179--206.

\bibitem{bhq00}
{\sc F.~Bernis, J.~Hulshof, and F.~Quir{\'o}s}, {\em The ``linear'' limit of
  thin film flows as an obstacle-type free boundary problem}, SIAM J. Appl.
  Math., 61 (2000), pp.~1062--1079 (electronic).

\bibitem{BP1}
{\sc A.~L. Bertozzi and M.~Pugh}, {\em The lubrication approximation for thin
  viscous films: the moving contact line with a ``porous media'' cut-off of van
  der {W}aals interactions}, Nonlinearity, 7 (1994), pp.~1535--1564.

\bibitem{BP2}
\leavevmode\vrule height 2pt depth -1.6pt width 23pt, {\em The lubrication
  approximation for thin viscous films: regularity and long-time behavior of
  weak solutions}, Comm. Pure Appl. Math., 49 (1996), pp.~85--123.

\bibitem{BDGG98}
{\sc M.~Bertsch, R.~Dal~Passo, H.~Garcke, and G.~Gr{\"u}n}, {\em The thin
  viscous flow equation in higher space dimensions}, Adv. Differential
  Equations, 3 (1998), pp.~417--440.

\bibitem{bdt00}
{\sc B.~Buffoni, E.~N. Dancer, and J.~F. Toland}, {\em The regularity and local
  bifurcation of steady periodic water waves}, Arch. Ration. Mech. Anal., 152
  (2000), pp.~207--240.

\bibitem{ct09}
{\sc X.~Cabr\'e and J.~Tan}, {\em Positive solutions of nonlinear problems
  involving the square root of the laplacian}.
\newblock Preprint, 2009.

\bibitem{caffarelli-friedman80}
{\sc L.~A. Caffarelli and A.~Friedman}, {\em Regularity of the free boundary of
  a gas flow in an {$n$}-dimensional porous medium}, Indiana Univ. Math. J., 29
  (1980), pp.~361--391.

\bibitem{CS83}
{\sc S.~L. Crouch and A.~M. Starfield}, {\em Boundary element methods in solid
  mechanics}, George Allen \& Unwin, London-Boston, Mass., 1983.
\newblock With applications in rock mechanics and geological engineering.

\bibitem{DGG98}
{\sc R.~Dal~Passo, H.~Garcke, and G.~Gr{\"u}n}, {\em On a fourth-order
  degenerate parabolic equation: global entropy estimates, existence, and
  qualitative behavior of solutions}, SIAM J. Math. Anal., 29 (1998),
  pp.~321--342 (electronic).

\bibitem{DGG01}
{\sc R.~Dal~Passo, L.~Giacomelli, and G.~Gr{\"u}n}, {\em A waiting time
  phenomenon for thin film equations}, Ann. Scuola Norm. Sup. Pisa Cl. Sci.
  (4), 30 (2001), pp.~437--463.

\bibitem{DGG03}
\leavevmode\vrule height 2pt depth -1.6pt width 23pt, {\em Waiting time
  phenomena for degenerate parabolic equations---a unifying approach}, in
  Geometric analysis and nonlinear partial differential equations, Springer,
  Berlin, 2003, pp.~637--648.

\bibitem{ddlp}
{\sc J.~Desroches, E.~Detournay, B.~Lenoach, P.~Papanastasiou, J.~R.~A.
  Pearson, M.~Thiercelin, and A.~Cheng}, {\em The crack tip region in hydraulic
  fracturing}, Proc. R. Soc. Lond. A, 447 (1994), pp.~39--48.

\bibitem{dgj06}
{\sc J.~Dolbeault, I.~Gentil, and A.~J{\"u}ngel}, {\em A logarithmic
  fourth-order parabolic equation and related logarithmic {S}obolev
  inequalities}, Commun. Math. Sci., 4 (2006), pp.~275--290.

\bibitem{Fried}
{\sc A.~Friedman}, {\em Variational principles and free-boundary problems},
  Robert E. Krieger Publishing Co. Inc., Malabar, FL, second~ed., 1988.

\bibitem{DK}
{\sc J.~Geertsma and F.~de~Klerk}, {\em A rapid method of predicting width and
  extent of hydraulically induced fractures}, Journal of Petroleum Technology,
  21 (1969), pp.~1571--1581.

\bibitem{Grun95}
{\sc G.~Gr{\"u}n}, {\em Degenerate parabolic differential equations of fourth
  order and a plasticity model with non-local hardening}, Z. Anal. Anwendungen,
  14 (1995), pp.~541--574.

\bibitem{Grun01}
\leavevmode\vrule height 2pt depth -1.6pt width 23pt, {\em On {B}ernis'
  interpolation inequalities in multiple space dimensions}, Z. Anal.
  Anwendungen, 20 (2001), pp.~987--998.

\bibitem{Grun02}
{\sc G.~Gr{\"u}n}, {\em Droplet spreading under weak slippage: the optimal
  asymptotic propagation rate in the multi-dimensional case}, Interfaces Free
  Bound., 4 (2002), pp.~309--323.

\bibitem{Grun03}
\leavevmode\vrule height 2pt depth -1.6pt width 23pt, {\em Droplet spreading
  under weak slippage: a basic result on finite speed of propagation}, SIAM J.
  Math. Anal., 34 (2003), pp.~992--1006 (electronic).

\bibitem{lions69}
{\sc J.-L. Lions}, {\em Quelques m\'ethodes de r\'esolution des probl\`emes aux
  limites non lin\'eaires}, Dunod, 1969.

\bibitem{mazja}
{\sc V.~G. Maz'ja}, {\em Sobolev spaces}, Springer-Verlag, 1985.

\bibitem{MKP}
{\sc S.~L. Mitchell, R.~Kuske, and A.~P. Peirce}, {\em An asymptotic framework
  for finite hydraulic fractures including leak-off}, SIAM J. Appl. Math., 67
  (2006/07), pp.~364--386 (electronic).

\bibitem{oky58}
{\sc O.~A. Oleinik, A.~S. Kalashnikov, and C.~Yiu-Lin}, {\em The {C}auchy
  problem and boundary problems for equations of the type of unsteady
  filtration}, Izv. Akad. Nauk. SSSR Ser. Mat., 22 (1958), pp.~667--704.

\bibitem{Otto98}
{\sc F.~Otto}, {\em Lubrication approximation with prescribed nonzero contact
  angle}, Comm. Partial Differential Equations, 23 (1998), pp.~2077--2164.

\bibitem{PD08}
{\sc A.~Peirce and E.~Detournay}, {\em An implicit level set method for
  modeling hydraulically driven fractures}, Comput. Methods Appl. Mech. Engrg.,
  197 (2008), pp.~2858--2885.

\bibitem{PD09}
\leavevmode\vrule height 2pt depth -1.6pt width 23pt, {\em An {E}ulerian moving
  front algorithm with weak-form tip asymptotics for modeling hydraulically
  driven fractures}, Comm. Numer. Methods Engrg., 25 (2009), pp.~185--200.

\bibitem{PS05b}
{\sc A.~P. Peirce and E.~Siebrits}, {\em A dual mesh multigrid preconditioner
  for the efficient solution of hydraulically driven fracture problems},
  Internat. J. Numer. Methods Engrg., 63 (2005), pp.~1797--1823.

\bibitem{PS05}
{\sc A.~P. Peirce and E.~Siebrits}, {\em An {E}ulerian finite volume method for
  hydraulic fracture problems}, in Finite volumes for complex applications
  {IV}, ISTE, London, 2005, pp.~655--664.

\bibitem{simon}
{\sc J.~Simon}, {\em Compact sets in the space ${L}^p (0,{T};{B})$}, Ann. Math.
  Pura Appl., 146 (1987), pp.~65--96.

\bibitem{SS85}
{\sc D.~A. Spence and P.~Sharp}, {\em Self-similar solutions for
  elastohydrodynamic cavity flow}, Proc. Roy. Soc. London Ser. A, 400 (1985),
  pp.~289--313.

\bibitem{vazquez}
{\sc J.~L. V{\'a}zquez}, {\em The porous medium equation}, Oxford Mathematical
  Monographs, The Clarendon Press Oxford University Press, Oxford, 2007.
\newblock Mathematical theory.

\bibitem{ZK}
{\sc Y.~P. Zheltov and S.~A. Khristianovich}, {\em On hydraulic fracturing of
  an oil-bearing stratum}, Izv. Akad. Nauk SSSR. Otdel Tekhn. Nuk, 5 (1955),
  pp.~3--41.

\bibitem{zygmund}
{\sc A.~Zygmund}, {\em Trigonometric series. {V}ol. {I}, {II}}, Cambridge
  Mathematical Library, Cambridge University Press, Cambridge, third~ed., 2002.
\newblock With a foreword by Robert A. Fefferman.

\end{thebibliography}

\end{document}